\numberwithin{equation}{section}
\newtheorem{theorem}[subsection]{Theorem}
\newtheorem{corollary}[subsection]{Corollary}
\newtheorem{lemma}[subsection]{Lemma}
\newtheorem{proposition}[subsection]{Proposition}
\theoremstyle{definition}
\newtheorem{definition}[subsection]{Definition}
\newtheorem{remark}[subsection]{Remark}
\newtheorem{example}[subsection]{Example}
\newtheorem{construction}[subsection]{Construction}
\newcommand{\cal}{\mathcal}
\DeclareMathOperator{\Mod}{Mod}
\newcommand{\arxivlink}[1]{\href{http://arxiv.org/abs/#1}{\texttt{arXiv:#1}}}
\title[On log ${\rm TAQ}$ and log ${\rm THH}$]{On the relationship between  \\ logarithmic TAQ and logarithmic THH}
\author{Tommy Lundemo}
\address{IMAPP, Radboud University Nijmegen, The Netherlands}
\email{t.lundemo@math.ru.nl}
\date{\today}
\begin{document}
\maketitle

\begin{abstract} We provide a new description of logarithmic topological Andr\'e--Quillen homology in terms of the indecomposables of an augmented ring spectrum. The new description allows us to interpret logarithmic TAQ as an abstract cotangent complex, and leads to a base-change formula for logarithmic topological Hochschild homology. The latter is analogous to results of Weibel--Geller for Hochschild homology of discrete rings, and of McCarthy--Minasian and Mathew for topological Hochschild homology. For example, our results imply that logarithmic THH satisfies base-change for tamely ramified extensions of discrete valuation rings. 
\end{abstract}

\section{Introduction} Ramification is a notion in ordinary algebra which has no clear generalization to the context of ${\Bbb E}_{\infty}$-rings. For example, there is evidence that the complexification map ${\rm KO} \to {\rm KU}$ relating the real and complex periodic $K$-theory spectra should be regarded an unramified extension. It fails to be an \'etale map in the sense of Lurie \cite[Chapter 7]{Lur}, as the map of graded rings \[\pi_0({\rm KU}) \otimes_{{\pi}_0({\rm KO})} \pi_*({\rm KO}) \xrightarrow{} \pi_*({\rm KU})\] fails to be an isomorphism. Nonetheless, it enjoys a strong formal \'etaleness property in that the unit map \[{\rm KU} \to {\rm THH}^{\rm KO}({\rm KU})\] from ${\rm KU}$ to its topological Hochschild homology relative to ${\rm KO}$ is a stable equivalence. For ordinary Hochschild homology of discrete rings, the analogous property is satisfied for any \'etale map \cite{GW91}. 

In general, suppose that $R \to A$ is a map of ${\Bbb E}_{\infty}$-rings such that the unit map \[A \to {\rm THH}^R(A)\] from $A$ to its topological Hochschild homology relative to $R$ is a stable equivalence. This implies that the topological Andr\'e--Quillen homology ${\rm TAQ}^R(A)$, the $A$-module corepresenting derivations, is contractible. As we review below, these formal \'etaleness properties are, up to a mild finiteness condition, equivalent to being \'etale once attention is restricted to \emph{connective} ring spectra.

\subsection{Examples of ramified extensions of ${\Bbb E}_{\infty}$-rings} Notions of formal \'etaleness are less useful if one wants to distinguish between tame and wild ramification. We give an example of a map of ${\Bbb E}_{\infty}$-rings which we think of as tamely ramified, and one which we think of as wildly ramified. Both are maps of connective ${\Bbb E}_{\infty}$-rings that fail to be \'etale, and so neither map satsifies any of the formal \'etaleness properties discussed above. 

Let $p$ be an odd prime and consider the inclusion ${\ell}_p \to {\rm ku}_p$ of the $p$-complete connective Adams summand. On homotopy rings, this is the map ${\Bbb Z}_p[v] \to {\Bbb Z}_p[u]$ sending $v$ to $u^{p -1}$. Thinking of the coefficients $v$ and $u$ as uniformizers, this is reminiscent of a tamely ramified extension of number rings. As the ring spectra involved are connective, one might expect that the inclusion of the Adams summand should be regarded as tamely ramified. 

Consider now the complexification map ${\rm ko} \to {\rm ku}$ relating the real and complex \emph{connective} $K$-theory spectra. The map of homotopy rings is more complicated. However, restricting attention to the behavior of the Bott classes, the fact that the Bott class $w \in \pi_8({\rm ko})$ maps to $u^4$ leads us to expect that the complexification map is wildly ramified at the prime $2$.

As pointed out by Rognes \cite[Remark 7.3]{Rog14}, there is evidence for both of the above expectations through a natural interpretation of Noether's theorem relating tame ramification for ordinary rings to the existence of a normal basis. However, it is not clear how the tame and wild ramification of these maps of ${\Bbb E}_{\infty}$-rings should be interpreted in terms of ${\rm TAQ}$ and ${\rm THH}$. The following idea from logarithmic geometry has proven useful in addressing this issue:  rigidifying schemes with the extra data of a \emph{logarithmic structure} allows one to treat some tamely ramified extensions as if they were unramified. For example, a finite extension of complete discrete valuation rings in mixed characteristic it is at most tamely ramified if and only if it is \emph{log \'etale}. The essential property enjoyed by the tamely ramified extensions is the vanishing of a certain module of logarithmic differentials. These generalize the classical K\"ahler differentials $\Omega^1_{C | B}$ associated to a map $B \to C$ of discrete rings, which vanish for any \'etale map. The converse fails to hold: For example, any map of perfect ${\Bbb F}_p$-algebras is formally \'etale.   \\

In this paper we generalize results relating various notions of formal \'etaleness from ${\Bbb E}_{\infty}$-ring spectra to \emph{logarithmic ring spectra}. The latter for instance arise as approximations of periodic ring spectra that are finer than the connective covers, but nevertheless retain some characteristics enjoyed by connective ring spectra. This generalization is desirable for at least two reasons: first, it sheds light on the nature of tame and wild ramification in the context of ${\Bbb E}_{\infty}$-rings. For example, we will see below that the unit map from ${\rm ku}_p$ to the \emph{logarithmic} ${\rm THH}$ of ${\rm ku}_p$ relative to the Adams summand is a stable equivalence, while the corresponding statement fails for the complexification map ${\rm ko} \to {\rm ku}$. Secondly, it relaxes the necessary connectivity hypotheses that appear in results relating formal \'etaleness properties for ordinary ${\Bbb E}_{\infty}$-rings.

The core ingredient in this generalization is a new description of logarithmic topological Andr\'e--Quillen homology, which was initially introduced by Rognes \cite{Rog09} and later studied by Sagave \cite{Sag14}. The new description involves Rognes' notion of \emph{repletion} in a manner reminiscent of how it is employed in the definition of logarithmic ${\rm THH}$ of Rognes, Sagave and Schlichtkrull \cite{RSS15}; see also Remark \ref{rem:logkahler} for an algebro-geometric interpretation of the new description. Our formulation is simultaneously more reminiscent of the definition of ordinary ${\rm TAQ}$ of Basterra \cite{Bas99} in that it arises as the module of indecomposables of an augmented ring spectrum. It therefore allows us to describe the relationship between log ${\rm TAQ}$ and log ${\rm THH}$ in a manner largely analogous to the non-logarithmic case.

\subsection{The \'etale descent formula for ${\rm THH}$} To put our results in context, we first review the results which we aim to generalize.  Let $f \colon R \to A$ be a map of ${\Bbb E}_{\infty}$-ring spectra. We say that $f$ 

\begin{enumerate}
\setcounter{enumi}{-1}
\item is \emph{\'etale} is the map $\pi_0(R) \to \pi_0(A)$ of discrete commutative rings is \'etale and the map of graded rings $\pi_0(A) \otimes_{\pi_0(R)} \pi_*(R) \xrightarrow{} \pi_*(A)$ is an isomorphism;
\item satisfies \emph{\'etale descent} if the canonical map $A \wedge_R {\rm THH}(R) \xrightarrow{} {\rm THH}(A)$ is an equivalence;   
\item is \emph{formally ${\rm THH}$-\'etale} if the unit map $A \xrightarrow{} {\rm THH}^R(A)$ is an equivalence;
\item is \emph{formally ${\rm TAQ}$-\'etale} if the $A$-module spectrum ${\rm TAQ}^R(A)$ is contractible. 
\end{enumerate}

The above properties go by various names in the literature; we choose to follow the terminology of Richter \cite[Definitions 8.3 and 8.8]{Ric17}. These notions are for instance useful for studying morphisms which are not necessarily \'etale, but exhibit similar formal properties. The most prominent class of examples is perhaps the faithful Galois extensions of Rognes \cite{Rog08}.

Properties (0) and (3) are closely related, since the topological Andr\'e--Quillen homology ${\rm TAQ}^R(A)$ is a model for the cotangent complex ${\Bbb L}_{A | R}$. For maps between connective ring spectra, formally ${\rm TAQ}$-\'etale implies \'etale as soon as $\pi_0(A)$ is finitely presented over $\pi_0(R)$ by \cite[Lemma 8.9]{Lur11}. It is always the case that \'etale morphisms are formally ${\rm TAQ}$-\'etale by \cite[Corollary 7.5.4.5]{Lur}. We now present an alternative proof of this fact, which highlights how ${\rm THH}$ naturally fits into this story.

Let $B \to C$ be an \'etale morphism of discrete commutative rings. Weibel--Geller \cite{GW91} prove that Hochschild homology satisfies the base-change property \[{\rm HH}_*(C) \cong C \otimes_B {\rm HH}_*(B),\] and they relate this to descent for Hochschild homology along $B \to C$; hence the name \emph{\'etale descent} for this property. Mathew \cite{Mat17} generalizes the \'etale descent formula of Weibel--Geller to ${\Bbb E}_{\infty}$-ring spectra: if $f \colon R \to A$ is \'etale, then it satisfies \'etale descent. It is largely formal (cf.\ Section \ref{etaledescentimpliesthhetale}) to see that this implies that $f$ is formally ${\rm THH}$-\'etale, and an argument due to Rognes \cite[Lemma 9.4.4]{Rog08} based on work of Basterra and Mandell \cite{BM05} applies to show that $f$ is formally ${\rm TAQ}$-\'etale in this case.  

For morphisms between \emph{connective} ${\Bbb E}_{\infty}$-ring spectra, one finds that the \'etale descent formula holds as soon as ${\rm TAQ}^R(A)$ is contractible, since \cite[Proof of Proposition 7.5.1.15]{Lur} exhibits $R \to A$ as what Mathew calls \emph{strongly $0$-cotruncated} in this case. Mathew shows that any such morphism satisfies \'etale descent, and together this shows:

\begin{theorem}\label{etaledescentthh} Let $f \colon R \to A$ be a formally ${\rm TAQ}$-\'etale morphism between connective ${\Bbb E}_{\infty}$-ring spectra. Then $f$ satisfies \'etale descent, i.e.\ the canonical map \[A \wedge_R {\rm THH}(R) \xrightarrow{} {\rm THH}(A)\] is a stable equivalence.
\end{theorem}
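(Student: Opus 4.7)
The strategy is exactly the one sketched by the author immediately before the theorem: combine Lurie's cotruncation result with Mathew's descent theorem. Concretely, I would proceed in two steps.

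First, I would invoke \cite[Proof of Proposition 7.5.1.15]{Lur} to deduce that under the hypotheses (both $R$ and $A$ connective, and $\TAQ^R(A)$ contractible), the morphism $f \colon R \to A$ is strongly $0$-cotruncated in the sense of Mathew. The content of this step is that for connective $\mathbb{E}_\infty$-rings the cotangent complex $\mathbb{L}_{A|R} \simeq \TAQ^R(A)$ controls the deformation theory completely via the Postnikov tower of $A$ over $R$, so its vanishing forces the space of $R$-algebra maps out of $A$ into any target to be either empty or contractible — and moreover this persists after arbitrary base change, which is the ``strong'' part of strongly $0$-cotruncated.

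Second, I would apply Mathew's theorem \cite{Mat17} that any strongly $0$-cotruncated morphism of $\mathbb{E}_\infty$-rings satisfies \'etale descent, which gives the desired equivalence
\[
A \wedge_R \THH(R) \xrightarrow{\ \simeq\ } \THH(A).
\]
There is nothing further to check: the two cited results bolt together to produce the claim directly from the hypotheses.

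The main obstacle — and the reason the theorem is non-formal — is the first step, i.e.\ the passage from $\TAQ^R(A) \simeq 0$ to strong $0$-cotruncation of $R \to A$. This is exactly where the connectivity assumption enters, because it is the connectivity of $A$ over $R$ that lets one rebuild $A$ from $R$ using Postnikov sections whose attaching data is governed by $\TAQ^R(A)$; without connectivity one has no such reconstruction and the implication fails. By contrast, Mathew's descent result is then a reasonably formal consequence of the cotruncation property, so the entire difficulty of the theorem is concentrated in invoking the Lurie input correctly.
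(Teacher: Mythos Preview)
Your proposal is correct and follows precisely the argument the paper gives in the paragraph preceding the theorem: use \cite[Proof of Proposition 7.5.1.15]{Lur} to get strong $0$-cotruncation from connectivity and vanishing $\mathrm{TAQ}$, then apply Mathew's result \cite{Mat17} to conclude \'etale descent.
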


We remark that this strengthens the \'etale descent formula of McCarthy--Minasian \cite[Section 5]{MM03}, where the authors impose a finiteness hypothesis roughly amounting to $A$ being perfect as an $R$-module. 

From the above discussion, we have that the following implications always hold true: \[\text{\'etale descent} \implies \text{formally THH-\'etale} \implies \text{formally TAQ-\'etale}.\] Moreover, the content of Theorem \ref{etaledescentthh} is that the converse statements hold as soon as the ring spectra involved are connective. Both reverse implications are known to fail in general, the first due to Mathew \cite{Mat17} and the second due to McCarthy--Minasian \cite{MM03}. One of the main results of the present paper, Theorem \ref{mainthm}, is an analogue of Theorem \ref{etaledescentthh} for logarithmic ring spectra.

\subsection{Logarithmic ring spectra} A \emph{pre-logarithmic ring} $(R, P, \beta)$ consists of a discrete commutative ring $R$, a commutative monoid $P$ and a map $\beta \colon P \to (R, \cdot)$ of commutative monoids to the underlying multiplicative monoid of $R$. A map $(f, f^\flat) \colon (R, P, \beta) \to (A, M, \alpha)$ of pre-log rings consists of a map $f^\flat \colon P \to M$ of commutative monoids and a map $f \colon R \to A$ of commutative rings such that $\alpha \circ f^\flat = (f, \cdot) \circ \beta$. A pre-log ring $(A, M, \alpha)$ determines a localization $A[M^{-1}]$, and we think of the pre-log ring itself as an intermediate localization between $A$ and $A[M^{-1}]$.

\emph{Pre-logarithmic ring spectra} can be defined using \emph{commutative ${\cal J}$-space monoids} as introduced by Sagave and Schlichtkrull \cite{SS12}. These can be thought of as $QS^0$-graded ${\Bbb E}_{\infty}$-spaces. Every commutative (symmetric) ring spectrum $A$ gives rise to a commutative $\cal J$-space monoid $\Omega^{\cal J}(A)$, which we think of the underlying multiplicative graded monoid of $A$. Examples of pre-logarithmic ring spectra arise via homotopy classes: for example, the Bott class $u \in \pi_2({\rm ku})$ gives rise to a pre-log structure $D(u) \to \Omega^{\cal J}({\rm ku})$. The localization it determines is the periodic theory ${\rm KU}$. We refer to Sections \ref{commjspace} and \ref{logringspectra} of the present paper and \cites{Rog09, SS12, Sag14, RSS15} for more detailed introductions to logarithmic ring spectra.  

\subsection{The \'etale descent formula for logarithmic ${\rm THH}$}  We now aim to generalize Theorem \ref{etaledescentthh} to the context of pre-logarithmic ring spectra. For this we shall make use of logarithmic ${\rm TAQ}$ as developed in \cites{Rog09, Sag14} and the present paper, and logarithmic ${\rm THH}$ as developed in \cites{Rog09, RSS15}. 

We can make definitions analogous to those already discussed for ${\rm TAQ}$ and ${\rm THH}$ in the context of pre-logarithmic ring spectra.

\begin{definition}\label{def:formallylogetale} Let $(f, f^\flat) \colon (R, P) \xrightarrow{} (A, M)$ be a map of pre-logarithmic ring spectra. The morphism $(f, f^\flat)$ 

\begin{enumerate}
\item satisfies \emph{log \'etale descent} the natural map $A \wedge_R {\rm THH}(R, P) \xrightarrow{} {\rm THH}(A, M)$ is a stable equivalence;
\item is \emph{formally log {\rm THH}-\'etale} if the unit map $A \xrightarrow{} {\rm THH}^{(R, P)}(A, M)$ is a stable equivalence;
\item is \emph{formally log ${\rm TAQ}$-\'etale} if the $A$-module spectrum ${\rm TAQ}^{(R, P)}(A, M)$ is contractible.
\end{enumerate} 
\end{definition}

The following generalizes Theorem \ref{etaledescentthh} to pre-logarithmic ring spectra: 

\begin{theorem}\label{mainthm} Let $(R, P) \to (A, M)$ be a map of pre-logarithmic ring spectra. Then the following implications always hold true: \[\text{log \'etale descent} \implies \text{formally log $\rm THH$-\'etale} \implies \text{formally log $\rm TAQ$-\'etale}.\] Moreover, if the pre-logarithmic ring spectra involved are connective, then the reverse implications hold. 
\end{theorem}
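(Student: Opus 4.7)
The plan is to transport the chain of implications from the non-logarithmic setting (Theorem~\ref{etaledescentthh} and its proof sketch in the introduction) to the logarithmic one, using as the main new input the description of ${\rm TAQ}^{(R,P)}(A,M)$ as the indecomposables of an augmented commutative $A$-algebra developed earlier in the paper.

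For the forward implications, log \'etale descent reads $\THH(A,M)\simeq A\wedge_R \THH(R,P)$. Combining this with a base change equivalence of the form
$$\THH^{(R,P)}(A,M)\simeq R\wedge_{\THH(R,P)}\THH(A,M),$$
which I would establish from the construction of log $\THH$ via the replete bar construction of \cite{RSS15}, one immediately obtains $\THH^{(R,P)}(A,M)\simeq R\wedge_{\THH(R,P)}(A\wedge_R \THH(R,P))\simeq A$, i.e.\ formal log $\THH$-\'etaleness. This step is the logarithmic analogue of Section~\ref{etaledescentimpliesthhetale}. For the next forward implication, I would apply the new description: formal log $\THH$-\'etaleness trivialises the augmented $A$-algebra $\THH^{(R,P)}(A,M)\to A$ from which ${\rm TAQ}^{(R,P)}(A,M)$ is extracted as indecomposables, and hence ${\rm TAQ}^{(R,P)}(A,M)\simeq\ast$. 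This is the log counterpart of the argument of Rognes \cite[Lemma 9.4.4]{Rog08} following Basterra--Mandell \cite{BM05}.

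For the reverse implications in the connective setting, the first step is to show that vanishing of log ${\rm TAQ}$ implies formal log $\THH$-\'etaleness. I would imitate the proof of \cite[Proposition 7.5.1.15]{Lur} via the cotangent-type filtration of the fibre of the unit $A\to\THH^{(R,P)}(A,M)$, whose associated graded is assembled from suspensions of smash powers of ${\rm TAQ}^{(R,P)}(A,M)$ over $A$. In the connective range this filtration converges, and triviality of log ${\rm TAQ}$ forces the fibre to vanish. To promote formal log $\THH$-\'etaleness to log \'etale descent, I would follow Mathew \cite{Mat17}: in the connective case, formal log $\THH$-\'etaleness lets one show that $(R,P)\to(A,M)$ is strongly $0$-cotruncated in a suitable sense, and Mathew's descent machinery then produces the equivalence $A\wedge_R\THH(R,P)\xrightarrow{\simeq}\THH(A,M)$.

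The main obstacle is the reverse step from formal log ${\rm TAQ}$-\'etaleness to formal log $\THH$-\'etaleness. The delicate point is that the replete bar construction underlying $\THH(A,M)$ must interact well with the indecomposables filtration provided by the new description of log ${\rm TAQ}$; one must verify that the first associated graded piece is genuinely $\Sigma\,{\rm TAQ}^{(R,P)}(A,M)$, with the correct shift and connectivity bounds when the inputs are connective, so that the Postnikov-tower induction forcing the fibre to vanish goes through as in the non-logarithmic case.
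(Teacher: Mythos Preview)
Your forward implications are essentially correct and match the paper. For log \'etale descent $\Rightarrow$ formally log $\THH$-\'etale, the paper uses exactly the base-change equivalence $\THH^{(R,P)}(A,M)\simeq R\wedge_{\THH(R,P)}\THH(A,M)$ you propose (this is Lemma~\ref{logthhjuggling}). For formally log $\THH$-\'etale $\Rightarrow$ formally log ${\rm TAQ}$-\'etale, your phrasing is slightly off: ${\rm TAQ}^{(R,P)}(A,M)$ is not the indecomposables of $\THH^{(R,P)}(A,M)$ but of the algebra $C=(A\wedge_R A)\wedge_{{\Bbb S}^{\cal J}[M\boxtimes_P M]}{\Bbb S}^{\cal J}[(M\boxtimes_P M)^{\rm rep}]$, whose \emph{suspension} in ${\cal C}{\rm Sp}^\Sigma_{A//A}$ is $\THH^{(R,P)}(A,M)$ (Proposition~\ref{logthhsuspension}). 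The conclusion follows either way, and the paper runs the Rognes/Basterra--Mandell argument through this suspension identification (Proposition~\ref{logthhetaleimplieslogtaqetale}).

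The genuine divergence is in the reverse direction. The paper does \emph{not} pass through formal log $\THH$-\'etaleness as an intermediate step; it proves formally log ${\rm TAQ}$-\'etale $\Rightarrow$ log \'etale descent directly (Proposition~\ref{prop:logtaqetaleimplieslogetaledescent}). It applies Kuhn's tower (Theorem~\ref{kuhnanalysis}) to \emph{both sides} of the comparison map $A\wedge_R\THH(R,P)\to\THH(A,M)$ of augmented commutative $A$-algebras and shows the induced map on each layer is an equivalence. The fibres are extended powers of ${\rm taq}^A(-)$, and the cofibre of the induced map on ${\rm taq}^A$ is identified, via Lemma~\ref{logthhjuggling} and Lemma~\ref{taqoflogthh}, with $\Sigma{\rm TAQ}^{(R,P)}(A,M)$, which vanishes by hypothesis. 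Connectivity enters only to guarantee convergence of the tower, through Proposition~\ref{logaugmentation} establishing that the augmentations are $\pi_0$-isomorphisms.

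Your proposed two-step route has a real gap at the second step. Your step~(1), ${\rm TAQ}$-\'etale $\Rightarrow$ $\THH$-\'etale via Kuhn's tower on $\THH^{(R,P)}(A,M)$ itself, would work and is close in spirit to the paper's argument; you even correctly anticipate that the relevant graded piece is $\Sigma{\rm TAQ}^{(R,P)}(A,M)$. But your step~(2), invoking Mathew's ``strongly $0$-cotruncated'' framework to promote $\THH$-\'etale to \'etale descent, is not justified: that notion and its descent consequences in \cite{Mat17} are formulated for maps of ${\Bbb E}_\infty$-rings, and you offer no logarithmic analogue beyond the phrase ``in a suitable sense''. Developing a log version of Mathew's argument would be substantial and is not available. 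The paper's direct approach sidesteps this entirely by never isolating the implication $\THH$-\'etale $\Rightarrow$ \'etale descent; since you already have the Kuhn-tower machinery in hand from step~(1), the economical fix is to apply it instead to the comparison map $A\wedge_R\THH(R,P)\to\THH(A,M)$, exactly as the paper does.
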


In the theorem, \emph{connectivity} means that both the underlying commutative ring spectrum and a graded analogue of the spherical monoid ring of the underlying graded ${\Bbb E}_\infty$-space are connective. We make this precise in Definition \ref{logconnective}.

We remark that our terminology differs from that used used in \cite{RSS18}, where the term ``formally log ${\rm THH}$-\'etale" is used for what we have called ``log \'etale descent." With our terminology, the main result of \cite[Section 6]{RSS18} reads as follows:

\begin{theorem}[Rognes--Sagave--Schlichtkrull] Let $p$ be an odd prime and let ${\rm ku}_{p}$ be the $p$-complete connective complex $K$-theory spectrum. The inclusion of the Adams summand $\ell_p$ induces a map $(\ell_p, D(v)) \to ({\rm ku}_{p}, D(u))$ of pre-logarithmic ring spectra which satisfies log \'etale descent. 
\end{theorem}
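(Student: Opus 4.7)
The cleanest strategy, given the main result of this paper, is to deduce the claim from Theorem \ref{mainthm}. Both $\ell_p$ and ${\rm ku}_p$ are connective ${\Bbb E}_{\infty}$-ring spectra, and the pre-log structures $D(v)$ and $D(u)$ are generated by the Bott classes $v \in \pi_{2p-2}(\ell_p)$ and $u \in \pi_2({\rm ku}_p)$ in positive degrees; hence the pre-log ring spectra involved are connective in the sense of Definition \ref{logconnective}. Theorem \ref{mainthm} therefore reduces the desired log \'etale descent statement to the weaker assertion that $(\ell_p, D(v)) \to ({\rm ku}_p, D(u))$ is formally log ${\rm TAQ}$-\'etale, i.e.\ that the ${\rm ku}_p$-module spectrum ${\rm TAQ}^{(\ell_p, D(v))}({\rm ku}_p, D(u))$ is contractible.

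To establish this vanishing, I would exploit the new description of log ${\rm TAQ}$ developed in this paper as the indecomposables of an augmented commutative ring spectrum. The guiding algebraic intuition is classical: the ring map $\bZ_p[v] \to \bZ_p[u]$ defined by $v \mapsto u^{p-1}$ is tamely ramified at $p$ since $\gcd(p-1, p) = 1$, and the corresponding module of logarithmic K\"ahler differentials $\Omega^{1, \log}_{\bZ_p[u] \, | \, \bZ_p[v]}$ is generated by $d\log u$ modulo the relation $d\log v = (p-1) \, d\log u$; since $p-1 \in \bZ_p^{\times}$, this module vanishes. The plan is then to upgrade this algebraic computation to a spectrum-level vanishing of log ${\rm TAQ}$ via a spectral sequence whose $E^2$-page involves higher logarithmic K\"ahler differentials of the induced map of graded log rings and which converges to $\pi_* {\rm TAQ}^{(\ell_p, D(v))}({\rm ku}_p, D(u))$.

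The main obstacle lies in showing that the $E^2$-page is given in terms of \emph{logarithmic} rather than ordinary K\"ahler differentials, since the latter do not vanish: the relation $u^{p-1} = v$ would produce a nontrivial class $u^{p-1} - v$ were one working non-logarithmically. This is precisely where repletion enters: the augmented ring spectrum modelling log ${\rm TAQ}$ should be built out of \emph{replete} smash products of the commutative $\cJ$-space monoids defining the log structures, and it is this replacement that converts the non-vanishing ordinary differentials into their logarithmic counterparts. Once the vanishing ${\rm TAQ}^{(\ell_p, D(v))}({\rm ku}_p, D(u)) \simeq 0$ is established, Theorem \ref{mainthm} yields the desired stable equivalence ${\rm ku}_p \sm_{\ell_p} {\rm THH}(\ell_p, D(v)) \xrightarrow{\simeq} {\rm THH}({\rm ku}_p, D(u))$.
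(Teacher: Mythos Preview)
The paper does not prove this theorem; it is stated as a result of Rognes--Sagave--Schlichtkrull and attributed to \cite[Section 6]{RSS18}. There, the authors establish log \'etale descent by direct computation: they determine ${\rm THH}(\ell_p, D(v))$ and ${\rm THH}({\rm ku}_p, D(u))$ explicitly (using the localization cofiber sequences relating log ${\rm THH}$ to ordinary ${\rm THH}$ of the connective and periodic theories) and then verify the base-change map is an equivalence. The present paper's logic runs in the opposite direction to yours: it takes log \'etale descent as input from \cite{RSS18} and combines it with Theorem~\ref{mainthm} to deduce formal log ${\rm THH}$-\'etaleness as Corollary~\ref{thhku}.

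Your proposed route---reduce via Theorem~\ref{mainthm} to the vanishing of ${\rm TAQ}^{(\ell_p, D(v))}({\rm ku}_p, D(u))$ and then argue that vanishing from the algebraic fact that $\Omega^{1,\log}_{{\Bbb Z}_p[u]|{\Bbb Z}_p[v]} = 0$---is appealing, but the crucial third step is not carried out. The spectral sequence you invoke, with $E_2$-page given by logarithmic K\"ahler differentials of the map on homotopy rings and abutment $\pi_*{\rm TAQ}^{(\ell_p, D(v))}({\rm ku}_p, D(u))$, is not constructed in this paper; the Andr\'e--Quillen spectral sequence of Section~\ref{lastsection} runs from log ${\rm TAQ}$ to log ${\rm THH}$, not from algebraic log differentials to log ${\rm TAQ}$. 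The vanishing of $\Omega^{1,\log}$ on $\pi_*$ is indeed the correct heuristic, but lifting it to a spectrum-level statement requires either a universal coefficient/base-change argument identifying the replete augmented algebra of Definition~\ref{def:logtaq} with something whose indecomposables are visibly contractible, or an appeal to the computations in \cite{Sag14} or \cite{RSS18}. As written, your plan identifies the right target but leaves the essential step as a wish rather than an argument.
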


As a corollary, the results of \cite{RSS18} combined with Theorem \ref{mainthm} provide the following description of the relative logarithmic topological Hochschild homology ${\rm THH}^{(\ell_p, D(v))}({\rm ku}_{p}, D(u))$:

\begin{corollary}\label{thhku} The morphism $(\ell_p, D(v)) \xrightarrow{} ({\rm ku}_{p}, D(u))$ is formally log ${\rm THH}$-\'etale; that is, the unit map \[{\rm ku}_{p} \xrightarrow{\simeq} {\rm THH}^{(\ell_p, D(v))}({\rm ku}_{p}, D(u))\] is a stable equivalence of commutative symmetric ring spectra.  \qed
\end{corollary}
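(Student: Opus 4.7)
The plan is to combine two results with essentially no further work. First, the Rognes--Sagave--Schlichtkrull theorem stated immediately above asserts precisely that the map $(\ell_p, D(v)) \to ({\rm ku}_p, D(u))$ satisfies log \'etale descent, meaning that the canonical map
\[{\rm ku}_p \wedge_{\ell_p} {\rm THH}(\ell_p, D(v)) \longto {\rm THH}({\rm ku}_p, D(u))\]
is a stable equivalence. Second, Theorem \ref{mainthm} asserts that the implication ``log \'etale descent $\implies$ formally log ${\rm THH}$-\'etale'' holds for any morphism of pre-logarithmic ring spectra, with no connectivity hypothesis required in this direction. Applying this implication to the Rognes--Sagave--Schlichtkrull map yields the desired stable equivalence
\[{\rm ku}_p \xrightarrow{\simeq} {\rm THH}^{(\ell_p, D(v))}({\rm ku}_p, D(u)).\]

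Consequently, there is no real obstacle at the level of this corollary itself: the substantive content has been isolated in Theorem \ref{mainthm} (which I plan to establish by a logarithmic adaptation of the argument outlined in Section \ref{etaledescentimpliesthhetale}, expressing the relative log ${\rm THH}$ as a suitable relative smash product of ${\rm THH}(A, M)$ with $A$ over the logarithmic enveloping algebra, and then collapsing this smash product using log \'etale descent) and in the computation of \cite{RSS18}. It is worth emphasizing that, unlike the converse implication used elsewhere in Theorem \ref{mainthm}, the direction invoked here does not require the ring spectra ${\rm ku}_p$ and $\ell_p$ to be connective (though of course they are); the proof proceeds purely formally from the definitions of absolute and relative log ${\rm THH}$, which is why the corollary follows from concatenating the two cited inputs with no extra hypotheses needing verification.
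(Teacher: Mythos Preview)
Your proposal is correct and matches the paper's own approach: the corollary is stated with a \qed and no further argument precisely because it follows by concatenating the Rognes--Sagave--Schlichtkrull log \'etale descent result with the unconditional implication ``log \'etale descent $\Rightarrow$ formally log ${\rm THH}$-\'etale'' from Theorem \ref{mainthm}. Your observation that connectivity is not needed in this direction is also on point.
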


In particular, the inclusion of the Adams summand satisfies all three properties in Definition \ref{def:formallylogetale}. As a map of connective ring spectra that fails to be \'etale, the inclusion of the Adams summand does not satisfy any of the analogous properties for ordinary ${\rm THH}$ and ${\rm TAQ}$.

The complexification map ${\rm ko} \to {\rm ku}$ participates in a map $({\rm ko}, D(w)) \to ({\rm ku}, D(u))$ of pre-logarithmic ring spectra that realizes the periodic complexification map ${\rm KO} \to {\rm KU}$ on localizations. H\"oning--Richter prove that the associated logarithmic ${\rm TAQ}$ is \emph{not} contractible \cite[Theorem 3.2]{HR21}. Consequently, Theorem \ref{mainthm} implies that this map fails to satisfy any of the formal log \'etaleness properties discussed here, providing further evidence that the complexification map should be regarded a wildly ramified extension.

\subsection{Log \'etale descent for tamely ramified extensions} In the context of discrete log rings, examples of log \'etale maps arise as tamely ramified extensions of discrete valuation rings. A discrete valuation ring $R$ always has a natural log structure given by the inclusion of the non-zero elements $R \cap {\rm GL}_1(K)$ of $R$, where $K$ denotes the fraction field of $R$. In Section \ref{sec:discrete} we relate the usual notion of log \'etaleness in the sense of Kato \cite{Kat89} to formal \'etaleness properties in such a way that Theorem \ref{mainthm} applies to obtain:

\begin{theorem}[Theorem \ref{thm:logthhbasechangedvrs}] Let $R \to A$ be a tamely ramified finite extension of complete discrete valuation rings in mixed characteristic $(0, p)$ with perfect residue fields. Let $K \to L$ be the induced map of fraction fields. Then the induced map $(R, R \cap {\rm GL}_1(K)) \to (A, A \cap {\rm GL}_1(L))$ of log rings satisfies log \'etale descent; that is, the canonical map \[A \wedge_R {\rm THH}(R, R \cap {\rm GL}_1(K)) \to {\rm THH}(A, A \cap {\rm GL}_1(L))\] is a stable equivalence.
\end{theorem}

\subsection{Logarithmic topological Andr\'e--Quillen homology} Let $A$ be a commutative ring spectrum and let $A \to B \to A$ be an augmented commutative $A$-algebra. A homotopically meaningful construction of the indecomposables was introduced and studied by Basterra \cite{Bas99}, and gives rise to a functor \[{\rm taq}^A \colon {\rm CAlg}_{A // A} \to {\rm Mod}_A\] from the category of commutative augmented $A$-algebras to that of $A$-modules. The topological Andr\'e--Quillen homology ${\rm TAQ}^R(A)$ associated to a map of commutative ring spectra $R \to A$ is by definition the $A$-module ${\rm taq}^A(A \wedge_R A)$. 

For a morphism $(R, P) \to (A, M)$ of pre-logarithmic ring spectra, Sagave \cite{Sag14} constructs an $A$-module $\widetilde{{\rm TAQ}}{}^{(R, P)}(A, M)$ which corepresents \emph{logarithmic derivations}, which we review in Section \ref{logtaq} of the present paper. The definition of this $A$-module involves certain Segal $\Gamma$-spaces defined in terms of the morphism $P \to M$ of commutative ${\cal J}$-space monoids. We revisit the construction of logarithmic derivations to provide a new formulation of log ${\rm TAQ}$ which is more reminiscent of the non-logarithmic definition:

\begin{theorem}[Definition \ref{def:logtaq}, Theorem \ref{thm:taqcorepresents}]\label{thm:newtaq} The $A$-module $\widetilde{{\rm TAQ}}{}^{(R, P)}(A, M)$ is naturally weakly equivalent to ${\rm taq}^A(C)$, where $C$ is an explicitly defined augmented commutative $A$-algebra dependent on the morphism $(R, P) \to (A, M)$. 
\end{theorem}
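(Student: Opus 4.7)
My approach is to mimic Basterra's original formula ${\rm TAQ}^R(A) \simeq {\rm taq}^A(A \wedge_R A)$ by constructing an explicit augmented commutative $A$-algebra $C$ whose indecomposables compute Sagave's logarithmic ${\rm TAQ}$. The guiding analogy is that logarithmic $\THH$, as defined by Rognes, Sagave and Schlichtkrull in \cite{RSS15}, differs from ordinary $\THH$ by a factor built from the replete bar construction of the log structure; one expects the same to happen for ${\rm TAQ}$. Accordingly, I would take $C$ to be an $A$-algebra of the form $A \wedge_R A \wedge_{\bS^{\cJ}[P]} \bS^{\cJ}[M^{\rep}]$, augmented over $A$ via the multiplication on $A \wedge_R A$ and the structure map $M \to \Omega^{\cJ}(A)$, where $M^{\rep}$ denotes the replete bar construction of $P \to M$ of commutative $\cJ$-space monoids. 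The second smash factor records the logarithmic data in precisely the form appropriate for the definition of log $\THH$, so that the whole construction can be viewed as a logarithmic enhancement of Basterra's augmented algebra.

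With $C$ so defined, the plan is to establish the equivalence via corepresentability. By Basterra's universal property of indecomposables, $[{\rm taq}^A(C), N]_A \cong [C, A \vee N]_{{\rm CAlg}_{A//A}}$ for any $A$-module $N$, where $A \vee N$ is the trivial square-zero extension. The goal is to identify this latter set with the set of logarithmic derivations of $(A,M)$ over $(R,P)$ valued in $N$. A map out of the factor $A \wedge_R A$ records an ordinary $R$-linear derivation $A \to N$, while a map out of $\bS^{\cJ}[M^{\rep}]$ over $\bS^{\cJ}[P]$ records a map of commutative $\cJ$-space monoids $M \to \Omega^{\cJ}(A \vee N)$ compatible with the given map from $P$. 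Unwinding the definition of the square-zero extension at the level of $\cJ$-space monoids, the latter piece of data recovers a log derivation $M \to N$ in the sense of Sagave, and the pushout structure defining $C$ enforces precisely the compatibility between the ring-theoretic and log-theoretic derivations that appears in the definition of a logarithmic derivation.

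The main technical obstacle is to verify that the repletion in the construction of $C$ is correctly calibrated: one must show that augmented $A$-algebra maps $\bS^{\cJ}[M^{\rep}] \to A \vee N$ over $\bS^{\cJ}[P]$ are in natural bijection with the appropriate log-derivation data, with no further constraints and no additional data missed. This should reduce to the observation that the multiplicative $\cJ$-space monoid $\Omega^{\cJ}(A \vee N)$ of units of the square-zero extension is grouplike in the relevant sense over $\Omega^{\cJ}(A)$, so that every $P$-compatible map $M \to \Omega^{\cJ}(A \vee N)$ factors uniquely through $M^{\rep}$. Once this is in place, the identification of mapping sets above is natural in $N$, and combining it with Sagave's corepresentability result for $\widetilde{\mathrm{TAQ}}^{(R,P)}(A,M)$ (reviewed in Section \ref{logtaq}) yields the desired natural weak equivalence $\widetilde{\mathrm{TAQ}}^{(R,P)}(A,M) \simeq {\rm taq}^A(C)$ via the Yoneda lemma in the homotopy category of $A$-modules.
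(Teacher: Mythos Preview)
Your overall strategy---prove corepresentability by unwinding maps from $C$ into a square-zero extension, using that the target is replete so that maps factor through the repletion, then invoke Yoneda---is exactly the paper's strategy. However, your proposed $C$ is not the right object, and the discrepancy is not cosmetic.

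You write $C = (A \wedge_R A) \wedge_{\bS^{\cJ}[P]} \bS^{\cJ}[M^{\rm rep}]$ with $M^{\rm rep}$ described as ``the replete bar construction of $P \to M$.'' This does not match any construction in the paper, and it is unclear what map is being repleted. The paper's $C$ is
\[
(A \wedge_R A) \wedge_{\bS^{\cJ}[M \boxtimes_P M]} \bS^{\cJ}[(M \boxtimes_P M)^{\rm rep}],
\]
where the repletion is taken of the \emph{multiplication} map $M \boxtimes_P M \to M$. The point is that one first passes by extension of scalars from maps out of $(A,M)$ under $(R,P)$ to maps out of $(A \wedge_R A, M \boxtimes_P M)$ under $(A,M)$; only then does the repletion enter, and it is the repletion of the augmented $\cJ$-space monoid $M \boxtimes_P M \to M$. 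Your formula, with base $\bS^{\cJ}[P]$ rather than $\bS^{\cJ}[M \boxtimes_P M]$, does not arise this way and will not corepresent the correct functor.

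Your sketch of the key reduction---that the relevant target is replete so maps factor through the repletion---is the content of Lemma \ref{repletemonoid}, and this part is fine in outline. But you should be aware that this only gets you an equivalence between two mapping spaces of commutative $\cJ$-space monoids. The paper still needs substantial further work (Lemmas \ref{taqrewritelemma1} and \ref{taqrewritelemma2} and the diagram (\ref{bestdiagram})) to identify the $\cJ$-space mapping space ${\rm Map}_{M//M}(M \boxtimes_P M, (M+X)^{\cJ})$ with the \emph{spectrum-level} mapping space ${\rm Map}_{\bS^{\cJ}[M]//A}(\bS^{\cJ}[(M \boxtimes_P M)^{\rm rep}], A \vee_f X)$. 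This step uses the auxiliary grouplike $\cJ$-space monoid $W(M \boxtimes_P M)$ to split off the $M$-factor, and then exploits that $W$ is grouplike to pass through ${\rm GL}_1^{\cJ}$ and the $(\bS^{\cJ}, \Omega^{\cJ})$-adjunction. Your one-line ``should reduce to'' does not yet address this bridge between the two mapping spaces.
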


This prompts us to redefine the \emph{logarithmic topological Andr\'e--Quillen homology} ${\rm TAQ}^{(R, P)}(A, M)$ as the $A$-module ${\rm taq}^A(C)$. As we explain in Remark \ref{rem:logkahler}, our definition is analogous to the description of the log K\"ahler differentials as the conormal of the \emph{log diagonal} studied by Kato--Saito \cite{KS04}.

\subsection{Logarithmic ${\rm TAQ}$ as a cotangent complex} From the point of view of Lurie's cotangent complex formalism \cite[Section 7]{Lur}, the results of Basterra--Mandell \cite{BM05} exhibit ${\rm Mod}_A$ as the \emph{tangent category} of the category ${\rm CAlg}$ of commutative algebras at $A$, and topological Andr\'e--Quillen homology ${\rm TAQ}^R(A)$ as its corresponding cotangent complex ${\Bbb L}_{A | R}$.

By definition, the stable category ${\rm Sp}({\rm PreLog}_{(A, M)//(A, M)})$ is the tangent category of the category of pre-logarithmic ring spectra at $(A, M)$. A variant of this category has been proposed as a category of \emph{log modules} ${\rm Mod}_{(A, M)}$ in \cite[Remark 8.8]{Rog09}. In Section \ref{sec:cotangentcomplex} we recover an analogue of this category by stabilizing a \emph{replete} model structure ${\rm PreLog}^{\rm rep}_{(A, M)//(A, M)}$. With this model structure, ${\rm TAQ}^{(R, P)}(A, M)$ arises as the $A$-module underlying the $(A, M)$-module ${\Bbb L}^{\rm rep}_{(A, M)|(R, P)}$ obtained by forming the associated cotangent complex.  In addition, the replete model structure gives rise to an interpretation of log ${\rm THH}$ as a cyclic bar construction in the category of pre-log ring spectra. 

\subsection{A logarithmic Andr\'e--Quillen spectral sequence} As part of the proof of Theorem \ref{mainthm}, we apply a theorem due to Kuhn \cite[Theorem 3.10]{Kuh06} to express logarithmic ${\rm THH}$ as the homotopy limit of a tower of fibrations, where the fiber of each map in the tower can be described in terms of logarithmic ${\rm TAQ}$. This gives rise to a conditionally convergent spectral sequence \[E^{s, t}_1 = \pi_{t - s}(\big{[}\bigwedge_A^s \Sigma {\rm TAQ}^{(R, P)}(A, M)\big{]}_{h\Sigma_s}) \implies \pi_{t - s}({\rm THH}^{(R, P)}(A, M)),\] see \cite[IX.4.2]{BK72}, \cite{Boa99} and Section \ref{lastsection} of the present paper. In the case of ordinary ${\rm THH}$, this recovers a version of the spectral sequence constructed by Minasian \cite{Min03}. Related results leading to a version of the logarithmic Andr\'e--Quillen spectral sequence above appear in the unpublished PhD thesis of Franklin \cite{Fra15}. To the best of the author's understanding, the arguments in \emph{loc.\ cit.}\ are highly dependent on the arguments of McCarthy--Minasian \cites{Min03, MM03}, on which the present work does not rely.

\subsection{Outline} In Section \ref{commjspace} we review our conventions and some basic properties of commutative ${\cal J}$-space monoids. In Section \ref{cycbar} we review the cyclic and replete bar constructions, and we interpret both in terms of appropriate simplicial tensors. Section \ref{logringspectra} recalls basic properties of logarithmic ring spectra and the logification construction, while in Section \ref{logthh} we discuss relative logarithmic ${\rm THH}$ and its basic properties. In Section \ref{logtaq} we provide the new description of log ${\rm TAQ}$, while in Section \ref{lastsection} we give the proof of Theorem \ref{mainthm}.  In Section \ref{sec:discrete} we discuss log \'etale descent for tamely ramified extensions of discrete valuation rings, while in Section \ref{sec:cotangentcomplex} we give the cotangent complex interpretation of log ${\rm TAQ}$. 

\subsection{Acknowledgments} The author thanks Stefano Ariotta, Jack Davies, Bj{\o}rn Dundas and Gijs Heuts for helpful conversations related to the present work. Moreover, the author would like to thank Steffen Sagave for helpful discussions and comments on a draft version of this paper. Finally, the author is grateful to an anonymous referee for helpful comments on a previous version of this paper. This research was partially supported by the NWO-grant 613.009.121. 

\section{Commutative ${\cal J}$-space monoids}\label{commjspace} 

\subsection{Conventions on symmetric spectra} We denote by ${\rm Sp}^{\Sigma} = ({\rm Sp}^{\Sigma}, \wedge, {\Bbb S})$ the symmetric monoidal category of \emph{symmetric spectra} in simplicial sets as introduced in \cite{HSS00}. When referenced as a model category, we always reference the \emph{positive stable model structure} on ${\rm Sp}^{\Sigma}$, from which the category of commutative symmetric ring spectra ${\cal C}{\rm Sp}^{\Sigma}$ inherits a proper simplicial model structure \cite{MMSS01}. It is necessary to assume that a given commutative symmetric ring spectrum is positive fibrant for many of our constructions, and we will do so without further comment. 

\subsection{${\cal J}$-spaces} We give a rough recollection of the terminology involved in the theory of topological logarithmic structures based on commutative ${\cal J}$-space monoids as developed in \cites{Sag14, RSS15, RSS18}. A more detailed recollection is provided in \cite[Section 2]{RSS15}, while we refer to \cite[Section 4]{SS12} for further details. 

Denote by ${\cal J}$ the category obtained via Quillen's localization construction on the category of finite sets ${\bf n} = \{1, \dots, n\}$ and bijections. By convention, ${\bf 0}$ denotes the empty set. The category ${\cal J}$ has pairs $({\bf m}_1, {\bf m}_2)$ as objects. The category ${\cal S}^{\cal J}$ of \emph{${\cal J}$-spaces} is the category of functors from ${\cal J}$ to the category ${\cal S}$ of simplicial sets. This category has a symmetric monoidal product $\boxtimes$ defined as the left Kan extension along the ordered concatenation $- \sqcup - \colon {\cal J} \times {\cal J} \to {\cal J}$, i.e. \[(X \boxtimes Y)({\bf n_1, n_2}) = {\rm colim}_{(\bf k_1, k_2) \sqcup {(\bf l_1, l_2)} \to {(\bf n_1, n_2)}} X({\bf k_1, k_2}) \times Y({\bf l_1, l_2}).\] Here the colimit is taken in the slice category $\sqcup \downarrow {\cal J}$, and the monoidal unit $U^{\cal J}$ is the ${\cal J}$-space ${\cal J}(({\bf 0, 0}), -)$. The category ${\cal C}{\cal S}^{\cal J}$ of \emph{commutative ${\cal J}$-space monoids} is the category of commutative monoids with respect to this symmetric monoidal structure. 

By \cite[Proposition 4.10]{SS12}, the category of commutative ${\cal J}$-space monoids admits a \emph{positive ${\cal J}$-model structure} in which a map $M \to N$ is a weak equivalence if and only if it induces a weak equivalence $M_{h{\cal J}} \to N_{h{\cal J}}$ of simplicial sets on Bousfield--Kan homotopy colimits over ${\cal J}$. We refer to the weak equivalences of this model structure as \emph{${\cal J}$-equivalences}. This model structure is cofibrantly generated, proper and simplicial. With respect to this model structure there is a Quillen adjunction \[{\Bbb S}^{\cal J}[-] \colon {\cal C}{\cal S}^{\cal J} \leftrightarrows {\cal C}{\rm Sp}^{\Sigma} \colon \Omega^{\cal J}(-)\] relating the category of commutative ${\cal J}$-space monoids with that of commutative symmetric ring spectra. If $A$ is a commutative symmetric ring spectrum, we think of the commutative ${\cal J}$-space monoid $\Omega^{\cal J}(A)$ as the underlying graded monoid of~$A$. 

To every commutative ${\cal J}$-space monoid $M$ one can associate a graded discrete monoid $\pi_{0, *}(M)$ such that $\pi_{0, *}(\Omega^{\cal J}(A)) \cong (\pi_*(A), \cdot)$, the underlying graded multiplicative monoid of $\pi_*(A)$. There is a subobject ${\rm GL}_1^{\cal J}(A)$ of $\Omega^{\cal J}(A)$; the \emph{graded units} of $A$. As opposed to the ${\Bbb E}_{\infty}$-space of units ${\rm GL}_1(A)$ or its commutative ${\cal I}$-space model ${\rm GL}_1^{\cal I}(A)$, this captures units outside of $\pi_0(A)$; in particular, the associated map $\pi_{0, *}({\rm GL}_1^{\cal J}(A)) \to \pi_{0, *}(\Omega^{\cal J}(A))$ of graded discrete commutative monoids is the inclusion of the units in $(\pi_*(A), \cdot)$ \cite[Proposition 4.26]{SS12}.

We will need the following statement on numerous occasions:

\begin{lemma}\cite[Lemma 2.11]{Sag14}\label{monstr} Let $K$ and $L$ be commutative ${\cal J}$-space monoids, and assume that at least one of the two is cofibrant. Then the monoidal structure map \[K_{h{\cal J}} \times L_{h{\cal J}} \xrightarrow{\simeq} (K \boxtimes L)_{h{\cal J}}\] is a weak equivalence of simplicial sets.
\end{lemma}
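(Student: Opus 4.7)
My plan is to reduce the lemma to two standard facts about Bousfield--Kan homotopy colimits: a Fubini isomorphism for external product diagrams, and the homotopy invariance of left Kan extensions applied to cofibrant inputs. The guiding observation is that $\boxtimes$ is defined as the left Kan extension of the external product along the concatenation functor $\sqcup \colon \cJ \times \cJ \to \cJ$, so the homotopy colimit of $K \boxtimes L$ over $\cJ$ should, morally, coincide with the homotopy colimit of $K \times L$ over $\cJ \times \cJ$, and the latter splits as a product.

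First, I would factor the monoidal structure map as the composite
\[
K_{h\cJ} \times L_{h\cJ} \;\xleftarrow{\;\cong\;}\; (K \times L)_{h(\cJ \times \cJ)} \;\longrightarrow\; \bigl(\mathrm{Lan}_{\sqcup}(K \times L)\bigr)_{h\cJ} \;=\; (K \boxtimes L)_{h\cJ},
\]
where $K \times L$ denotes the external product $\cJ \times \cJ$-diagram, the leftward arrow is the Fubini comparison, and the rightward arrow is induced by the unit $K \times L \to \sqcup^{\ast}(K \boxtimes L)$ of the Kan extension adjunction, followed by the functoriality of homotopy colimits along $\sqcup$. A short check with the universal property identifies the composite with the monoidal structure map in the statement.

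For the leftward arrow I would expand $(-)_{h(\cJ \times \cJ)}$ via the Bousfield--Kan coend formula and exploit the splitting of slice categories
\[
(\cJ \times \cJ) \downarrow \bigl((m_1, m_2), (n_1, n_2)\bigr) \;\cong\; \bigl(\cJ \downarrow (m_1, m_2)\bigr) \times \bigl(\cJ \downarrow (n_1, n_2)\bigr),
\]
which induces a corresponding splitting of the classifying spaces entering the coend. Since products of simplicial sets distribute over coends in each variable, the double coend rearranges into $K_{h\cJ} \times L_{h\cJ}$, giving an honest isomorphism for which no cofibrancy hypothesis is required.

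For the rightward arrow, the cofibrancy hypothesis on $K$ (or $L$) is used to ensure that the ordinary Kan extension $\mathrm{Lan}_{\sqcup}(K \times L)$ agrees with its derived counterpart; the canonical map $(K \times L)_{h(\cJ \times \cJ)} \to (\mathrm{Lan}_{\sqcup}(K \times L))_{h\cJ}$ is then a weak equivalence by the general principle that derived left Kan extensions preserve homotopy colimits. The main obstacle is verifying this passage concretely: I would first transfer cofibrancy of $K$ as a commutative $\cJ$-space monoid to positive $\cJ$-cofibrancy as a $\cJ$-space, using that the relevant forgetful functor preserves cofibrations, and then use a Leibniz-type argument to show that the external product $K \times L$ inherits projective cofibrancy over $\cJ \times \cJ$ with respect to the positive level model structure. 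To make this rigorous, I expect to reduce to the case of generating cells --- free $\cJ$-spaces on positive objects --- where $\boxtimes$ admits an explicit description in terms of concatenation, and then propagate the equivalence through transfinite cell attachments using that both sides of the comparison preserve pushouts along cofibrations and filtered colimits.
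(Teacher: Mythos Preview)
The paper does not supply its own proof of this lemma: it is stated with a citation to \cite[Lemma 2.11]{Sag14}, and the only further comment is that in \emph{loc.\ cit.}\ the result is established under the weaker hypothesis of \emph{flatness} rather than cofibrancy. So there is no proof here to compare against; your write-up is a genuine proof sketch where the paper simply imports the result.

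That said, your outline is essentially the standard one used in the Sagave--Schlichtkrull framework: factor through $(K \times L)_{h(\cJ \times \cJ)}$, use Fubini for Bousfield--Kan homotopy colimits (which, as you note, is an honest isomorphism since the nerve of a product category splits and products in $\cS$ distribute over coproducts), and then argue that the strict left Kan extension along $\sqcup$ computes the homotopy Kan extension on suitably cofibrant inputs. One point deserves more care: you write that you would ``transfer cofibrancy of $K$ as a commutative $\cJ$-space monoid to positive $\cJ$-cofibrancy as a $\cJ$-space, using that the relevant forgetful functor preserves cofibrations.'' This is not literally true---the forgetful functor from $\CSJ$ to $\SJ$ does not preserve cofibrations in general. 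What is true, and what is actually used in \cite{Sag14} and \cite{SS12}, is that cofibrant commutative $\cJ$-space monoids are \emph{flat} as $\cJ$-spaces, and flatness is precisely the condition ensuring that $K \boxtimes (-)$ preserves $\cJ$-equivalences (equivalently, that the Kan extension is already derived). If you replace your cofibrancy-transfer step with an appeal to flatness, the argument goes through and matches the approach taken in the cited reference.
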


In \emph{loc.\ cit.}\ the statement is proved under the weaker cofibrancy hypothesis of \emph{flatness}, however this will make no difference for us.

\subsection{The group completion model structure and repletion} We now discuss group completion of commutative ${\cal J}$-space monoids. A commutative ${\cal J}$-space monoid $N$ is \emph{grouplike} if $\pi_0(N_{h{\cal J}})$ is a group. By \cite[Theorem 5.5]{Sag16}, the category of commutative ${\cal J}$-space monoids admits a \emph{group completion model structure} in which $N \to M$ is a weak equivalence if and only if $B(N_{h{\cal J}}) \to B(M_{h{\cal J}})$ is a weak equivalence. The cofibrations coincide with those of the positive ${\cal J}$-model structure, while the fibrant objects are the positive fibrant $M$ which are grouplike. This model structure arises as a left Bousfield localization of the positive ${\cal J}$-model structure; in particular, it is left proper. 

\begin{definition} Let $N \to M$ be a morphism of commutative ${\cal J}$-space monoids.

\begin{enumerate}\item The \emph{group completion} of a $N$ is defined by a functorial factorization \[\begin{tikzcd}N \ar[tail]{r}{\simeq} & N^{\rm gp} \ar[two heads]{r} & *\end{tikzcd}\] in the group completion model structure. \item The \emph{repletion} $N^{\rm rep}$ of $N$ relative to $M$ is defined by a functorial factorization \[\begin{tikzcd}N \ar[tail]{r}{\simeq} & N^{\rm rep} \ar[two heads]{r} & M\end{tikzcd}\] in the group completion model structure (so that $N^{\rm rep} = N^{\rm gp}$ in the case where $M$ is terminal). \item The morphism $N \to M$ \emph{virtually surjective} if the induced map $\pi_0(N^{\rm gp}_{h{\cal J}}) \to \pi_0(M^{\rm gp}_{h{\cal J}})$ is a surjection of abelian groups.\end{enumerate}
\end{definition}

\begin{lemma}\label{repletepullback} Let $N \to M$ be a virtually surjective morphism of commutative ${\cal J}$-space monoids. There is a homotopy cartesian square of the form \[\begin{tikzcd}[row sep = small]N^{\rm rep} \ar{r} \ar{d} & (N^{\rm gp})^{\rm f} \ar{d} \\ M \ar{r} & M^{\rm gp} \end{tikzcd}\] in the positive ${\cal J}$-model structure, where $(N^{\rm gp})^{\rm f}$ participates in a factorization \[\begin{tikzcd}N^{\rm gp} \ar[tail]{r}{\simeq} & (N^{\rm gp})^{\rm f} \ar[two heads]{r} & M^{\rm gp}\end{tikzcd}\] in this model structure. In particular, the square \[\begin{tikzcd}[row sep = small]N^{\rm rep} \ar{r} \ar{d} & (N^{\rm rep})^{\rm gp} \ar{d} \\ M \ar{r} & M^{\rm gp}\end{tikzcd}\] is homotopy cartesian.
\end{lemma}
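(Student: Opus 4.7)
The plan is to first establish the homotopy cartesian square involving $(N^{\rm gp})^{\rm f}$ and then deduce the variant with $(N^{\rm rep})^{\rm gp}$ by naturality. The idea is to identify $N^{\rm rep}$ with an explicitly constructed pullback.

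Set $P = M \times_{M^{\rm gp}} (N^{\rm gp})^{\rm f}$. Since $(N^{\rm gp})^{\rm f} \fib M^{\rm gp}$ is a positive $\cJ$-fibration, the strict pullback $P$ computes the positive $\cJ$-homotopy pullback. The composite $N \to N^{\rm gp} \to (N^{\rm gp})^{\rm f}$ together with the structure map $N \to M$ induces a canonical morphism $N \to P$ in $\CSJ$. The crucial claim is that the projection $P \fib M$ is a fibration in the group completion model structure. Granting this, $N \to P \fib M$ exhibits $P$ as a group completion fibrant replacement of $N$ over $M$; by the uniqueness of such factorizations up to positive $\cJ$-equivalence, the induced comparison $N^{\rm rep} \to P$ is a positive $\cJ$-equivalence, establishing that the first square is homotopy cartesian.

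To verify that $P \fib M$ is a group completion fibration, I would appeal to the standard characterization of fibrations in a left Bousfield localization: a positive $\cJ$-fibration $X \fib Y$ is also a fibration in the group completion model structure precisely when the naturality square
\[\begin{tikzcd}[row sep=small]
X \ar{r} \ar{d} & X^{\rm gp} \ar{d} \\
Y \ar{r} & Y^{\rm gp}
\end{tikzcd}\]
is homotopy cartesian in the positive $\cJ$-model structure. Since $P \fib M$ is a positive $\cJ$-fibration by base change, the claim reduces to showing that the natural map $P^{\rm gp} \to (N^{\rm gp})^{\rm f}$ is a positive $\cJ$-equivalence, which is in turn equivalent to $N \to P$ being a group completion equivalence. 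Applying $(-)_{h\cJ}$ and using Lemma \ref{monstr} together with the fact that group completion of commutative $\cJ$-space monoids is modelled by the $\Omega B(-)$-construction on $E_{\infty}$-spaces, this reduces to a direct homotopy-theoretic computation. The virtual surjectivity hypothesis enters decisively here: it ensures that $\pi_0(P_{h\cJ}) \to \pi_0(N^{\rm gp}_{h\cJ})$ is surjective, so that the group completion of $P_{h\cJ}$ covers all of $N^{\rm gp}_{h\cJ}$ rather than a proper collection of its components. I expect this computation to be the main technical obstacle of the proof.

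The second homotopy cartesian square follows from the first by naturality. Since $N \cof N^{\rm rep}$ is a group completion equivalence, the induced map $N^{\rm gp} \to (N^{\rm rep})^{\rm gp}$ is a positive $\cJ$-equivalence. A compatible fibrant replacement over $M^{\rm gp}$ then yields a positive $\cJ$-equivalence $(N^{\rm gp})^{\rm f} \simeq (N^{\rm rep})^{\rm gp,\rm f}$ of objects over $M^{\rm gp}$, and substituting into the first homotopy cartesian square yields the second.
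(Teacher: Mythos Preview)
Your approach is correct in outline and essentially reconstructs the argument behind \cite[Lemma 3.17]{RSS15}, which the paper simply cites for the first homotopy cartesian square. The strategy of identifying $N^{\rm rep}$ with the strict pullback $P = M \times_{M^{\rm gp}} (N^{\rm gp})^{\rm f}$ via the Bousfield localization characterization of fibrations is exactly right, and your identification of the key remaining step---that $P \to (N^{\rm gp})^{\rm f}$ is a group completion equivalence, using virtual surjectivity to ensure no components are lost---is accurate. You are honest that this computation is left open; it is precisely the content of the cited lemma. One small point: your appeal to ``uniqueness of factorizations'' to conclude that $N^{\rm rep} \to P$ is a positive $\cJ$-equivalence is not quite immediate, since $N \to P$ need not be a cofibration and neither object is fibrant in the group completion model structure. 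You should instead produce the comparison map by lifting and then argue via the characterization of local fibrations that both $N^{\rm rep}$ and $P$ are positive $\cJ$-homotopy pullbacks of the same cospan, so the comparison is a $\cJ$-equivalence.

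For the second square your argument is essentially the paper's: the paper produces a $\cJ$-equivalence $(N^{\rm rep})^{\rm gp} \xrightarrow{\simeq} (N^{\rm gp})^{\rm f}$ over $M^{\rm gp}$ directly from the lifting properties of the group completion model structure, and then invokes right properness of the positive $\cJ$-model structure (via \cite[Proposition 13.3.4]{Hir03}) to transport the first homotopy cartesian square to the second. Your ``compatible fibrant replacement'' achieves the same thing, but the paper's formulation is slightly cleaner in that it avoids introducing an auxiliary $(N^{\rm rep})^{\rm gp, \rm f}$.
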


\begin{remark}\label{abuse} In situations where we apply the latter formulation of Lemma \ref{repletepullback}, we will abuse notation slightly and simply write $N^{\rm gp}$ in place of $(N^{\rm rep})^{\rm gp}$. 
\end{remark}

\begin{proof}[Proof of Lemma \ref{repletepullback}] The existence of the first homotopy cartesian square is the content of \cite[Lemma 3.17]{RSS15}. To see that the second square is homotopy cartesian, we use the lifting properties of the group completion model structure to obtain a ${\cal J}$-equivalence $(N^{\rm rep})^{\rm gp} \xrightarrow{\simeq} (N^{\rm gp})^{\rm f}$ over $M^{\rm gp}$. Since the positive ${\cal J}$-model structure is right proper, \cite[Proposition 13.3.4]{Hir03} applies to compare the two squares.
\end{proof}

\subsection{Group completion commutes with homotopy pushouts} Suppose that we are given a diagram $M \xleftarrow{} P \xrightarrow{} N$ of commutative ${\cal J}$-space monoids with $P$ cofibrant and $P \to M$ a cofibration. We may assume without loss of generality that $P^{\rm gp} \to M^{\rm gp}$ is a cofibration as well, so that the induced morphism \[M \boxtimes_P N \xrightarrow{} M^{\rm gp} \boxtimes_{P^{\rm gp}} N^{\rm gp}\] is a weak equivalence in the group completion model structure. The universal property of the pushout provides a natural map \begin{equation}\label{gpweq}M^{\rm gp} \boxtimes_{P^{\rm gp}} N^{\rm gp} \xrightarrow{} (M \boxtimes_P N)^{\rm gp}\end{equation} which is a weak equivalence in the group completion model structure.

\begin{lemma}\label{gpcomppush} In the context described above, the morphism (\ref{gpweq}) is a ${\cal J}$-equivalence. 
\end{lemma}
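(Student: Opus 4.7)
The strategy is to invoke the general Bousfield localization principle that a local weak equivalence between local fibrant objects is already a weak equivalence in the ambient model structure. Here, the group completion model structure is a left Bousfield localization of the positive $\cJ$-model structure whose fibrant objects are the positively fibrant grouplike commutative $\cJ$-space monoids, so this principle specializes to the statement that a group completion weak equivalence between such objects is a $\cJ$-equivalence. Since \eqref{gpweq} is by construction a group completion weak equivalence, it will suffice to show that both its domain and codomain are grouplike; one may then compare \eqref{gpweq} with its positive $\cJ$-fibrant replacement (which remains grouplike, as grouplikeness is a homotopy invariant of $(-)_{h\cJ}$) and conclude by two-out-of-three.

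The codomain $(M \boxtimes_P N)^{\gp}$ is grouplike by definition of group completion. For the domain, I would use cofibrancy of $P^{\gp}$ and of $P^{\gp} \to M^{\gp}$ to resolve $M^{\gp} \boxtimes_{P^{\gp}} N^{\gp}$ by the two-sided bar construction $B^{\boxtimes}(M^{\gp}, P^{\gp}, N^{\gp}) = |[k] \mapsto M^{\gp} \boxtimes (P^{\gp})^{\boxtimes k} \boxtimes N^{\gp}|$. Iterating Lemma \ref{monstr} at each simplicial level (using that $\boxtimes$-products of cofibrant objects remain cofibrant in the positive $\cJ$-model structure) identifies $\big(B^{\boxtimes}(M^{\gp}, P^{\gp}, N^{\gp})\big)_{h\cJ}$ with the classical bar construction $B(M^{\gp}_{h\cJ}, P^{\gp}_{h\cJ}, N^{\gp}_{h\cJ})$ of simplicial commutative monoids. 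Taking $\pi_0$ then expresses $\pi_0\big((M^{\gp} \boxtimes_{P^{\gp}} N^{\gp})_{h\cJ}\big)$ as the pushout of the abelian groups $\pi_0(M^{\gp}_{h\cJ})$, $\pi_0(N^{\gp}_{h\cJ})$ along $\pi_0(P^{\gp}_{h\cJ})$, computed in commutative monoids. A direct check shows that any such pushout of abelian groups coincides with the pushout in abelian groups and is therefore again an abelian group, so that the domain of \eqref{gpweq} is indeed grouplike.

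The main technical obstacle I expect is the bookkeeping of cofibrancy hypotheses through the iterated application of Lemma \ref{monstr} and the geometric realization of the bar construction. However, this should be essentially standard once one invokes properness and the pushout product axiom for the positive $\cJ$-model structure established in \cite{SS12}; no new ideas beyond those already used in the monograph appear to be needed.
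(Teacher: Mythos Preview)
Your proposal is correct and follows the same overall strategy as the paper: both reduce to showing that the domain $M^{\gp} \boxtimes_{P^{\gp}} N^{\gp}$ is grouplike, and then invoke that a group completion weak equivalence between grouplike objects is a $\cJ$-equivalence.

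The only difference is in how grouplikeness of the domain is established. You resolve the pushout by the two-sided bar construction, iterate Lemma~\ref{monstr} levelwise, and identify $\pi_0$ as a pushout of abelian groups in commutative monoids. The paper takes a shorter route: a single application of Lemma~\ref{monstr} (using only that $M^{\gp}$ is cofibrant) identifies $\pi_0\big((M^{\gp} \boxtimes N^{\gp})_{h\cJ}\big)$ with the product of groups $\pi_0(M^{\gp}_{h\cJ}) \times \pi_0(N^{\gp}_{h\cJ})$, and then the canonical surjection $M^{\gp} \boxtimes N^{\gp} \to M^{\gp} \boxtimes_{P^{\gp}} N^{\gp}$ exhibits $\pi_0$ of the latter as a quotient of a group, hence a group. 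This avoids the bar-construction bookkeeping entirely. Your argument has the mild advantage of actually computing $\pi_0$ of the pushout rather than just bounding it, but that extra information is not needed here.
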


\begin{proof}  Since $M^{\rm gp}$ is cofibrant, the monoidal structure map \[M^{\rm gp}_{h{\cal J}} \times N^{\rm gp}_{h{\cal J}} \xrightarrow{\simeq} (M^{\rm gp} \boxtimes N^{\rm gp})_{h{\cal J}}\] is a weak equivalence of simplicial sets by Lemma \ref{monstr}. Hence the map \[\pi_0((M^{\rm gp} \boxtimes N^{\rm gp})_{h{\cal J}}) \xrightarrow{} \pi_0((M^{\rm gp} \boxtimes_{P^{\rm gp}} N^{\rm gp})_{h{\cal J}})\] is a surjection. The domain of this map being a group implies that its codomain is a group as well. By definition, this means that $M^{\rm gp} \boxtimes_{P^{\rm gp}} N^{\rm gp}$ is grouplike, which concludes the proof since (\ref{gpweq}) is now a weak equivalence in the group completion model structure between grouplike commutative ${\cal J}$-space monoids.
\end{proof}

\subsection{The repletion of augmented commutative ${\cal J}$-space monoids} Let $P$ be a cofibrant commutative ${\cal J}$-space monoid and let $P \to M \to P$ be a cofibrant commutative ${\cal J}$-space monoid over and under $P$, that is, the morphism $P \to M$ is a cofibration of commutative ${\cal J}$-space monoids and the composite is the identity on $P$. This clearly implies that the augmentation morphism $M \to P$ is virtually surjective, and as such we may describe the repletion $M^{\rm rep}$ of this morphism as the (homotopy) pullback of  $P \xrightarrow{} P^{\rm gp} \xleftarrow{} (M^{\rm gp})^{\rm f}$ as in Lemma \ref{repletepullback}. Recall that $U^{\cal J}$ is the initial object in the category of commutative ${\cal J}$-space monoids; its homotopy colimit $U^{\cal J}_{h{\cal J}}$ over ${\cal J}$ is contractible.

\begin{definition}\label{w} Define $W(M)$ as the (homotopy) pullback of the diagram \[U^{\cal J} \xrightarrow{} P^{\rm gp} \xleftarrow{} (M^{\rm gp})^{\rm f}\] of commutative ${\cal J}$-space monoids. 
\end{definition}

The universal property of the coproduct induces a natural map \begin{equation}\label{splitoffreplete}P \boxtimes W(M) \to P \times_{P^{\rm gp}} (M^{\rm gp})^{\rm f}\end{equation} over and under $P$. In the special case where $M = B^{\rm cy}(P)$ is the cyclic bar construction on $P$, the content of the following lemma appears implicitly in \cite[Proof of Proposition 3.1]{RSS18}. Our definition of $W(M)$ is analogous to (but different from) a similar construction in \emph{loc.\ cit}. In the context of discrete commutative monoids, a version of this result appears in \cite[Lemma 3.11]{Rog09}.

\begin{lemma}\label{repleterewrite} The map (\ref{splitoffreplete}) is a ${\cal J}$-equivalence.
\end{lemma}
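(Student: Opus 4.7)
The plan is to verify the claim by checking it after applying the Bousfield--Kan homotopy colimit functor $(-)_{h\cJ}$, which detects $\cJ$-equivalences by definition of the positive $\cJ$-model structure. On the source, Lemma \ref{monstr} (using that $P$ is cofibrant) gives $(P \boxtimes W(M))_{h\cJ} \simeq P_{h\cJ} \times W(M)_{h\cJ}$. The bulk of the work is therefore to show that the target has the same $\cJ$-homotopy type.

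First, I would observe that $W(M)$ is grouplike, since it is defined as the (homotopy) pullback of grouplike commutative $\cJ$-space monoids $U^{\cJ} \to P^{\rm gp} \leftarrow (M^{\rm gp})^{\rm f}$, so $W(M)_{h\cJ}$ is the homotopy fiber of $(M^{\rm gp})^{\rm f}_{h\cJ} \to P^{\rm gp}_{h\cJ}$. Since $(M^{\rm gp})^{\rm f} \to P^{\rm gp}$ is a fibration in the positive $\cJ$-model structure (being the right factor of the factorization in Lemma \ref{repletepullback}), the pullback $P \times_{P^{\rm gp}} (M^{\rm gp})^{\rm f}$ is a homotopy pullback there. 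I would argue that $(-)_{h\cJ}$ sends this to a homotopy pullback of $\bE_\infty$-spaces using right-properness of the positive $\cJ$-model structure together with the Sagave--Schlichtkrull comparison between commutative $\cJ$-space monoids and $\bE_\infty$-spaces.

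The key input is the splitting coming from the structure of $M$. The composite $P \to M \to P$ being the identity induces an identity composite $P^{\rm gp} \to M^{\rm gp} \to P^{\rm gp}$ after group completion, and lifting through $M^{\rm gp} \xrightarrow{\simeq} (M^{\rm gp})^{\rm f}$ yields a section $\sigma \colon P^{\rm gp} \to (M^{\rm gp})^{\rm f}$ of the augmentation $(M^{\rm gp})^{\rm f} \to P^{\rm gp}$. On homotopy colimits this produces a split surjection of grouplike $\bE_\infty$-spaces, which under the equivalence with connective spectra splits as a product. Hence $(M^{\rm gp})^{\rm f}_{h\cJ} \simeq P^{\rm gp}_{h\cJ} \times W(M)_{h\cJ}$, and substituting into the homotopy pullback yields
\[
\bigl(P \times_{P^{\rm gp}} (M^{\rm gp})^{\rm f}\bigr)_{h\cJ} \simeq P_{h\cJ} \times^h_{P^{\rm gp}_{h\cJ}}\!\bigl(P^{\rm gp}_{h\cJ} \times W(M)_{h\cJ}\bigr) \simeq P_{h\cJ} \times W(M)_{h\cJ},
\]
matching the source. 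A direct unraveling confirms that the map (\ref{splitoffreplete}), built from the multiplication $P \boxtimes (M^{\rm gp})^{\rm f} \to (M^{\rm gp})^{\rm f}$ and the inclusion $W(M) \hookrightarrow (M^{\rm gp})^{\rm f}$, realizes this identification.

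The main obstacle is justifying that $(-)_{h\cJ}$ commutes with the homotopy pullback defining the target; this is where the right properness of the positive $\cJ$-model structure and the grouplike nature of the diagram $P^{\rm gp} \leftarrow (M^{\rm gp})^{\rm f}$ enter crucially. Once this is in place, the splitting-of-augmentations argument is essentially a statement about connective spectra and is standard.
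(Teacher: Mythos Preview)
Your approach is correct and takes a genuinely different route from the paper. The paper reformulates the claim as the assertion that the square
\[
\begin{tikzcd}[row sep = small]
B^{\boxtimes}(P, U^{\cJ}, W(M)) \ar{r} \ar{d} & B^{\boxtimes}(P^{\rm gp}, P^{\rm gp}, (M^{\rm gp})^{\rm f}) \ar{d} \\
B^{\boxtimes}(P, U^{\cJ}, U^{\cJ}) \ar{r} & B^{\boxtimes}(P^{\rm gp}, P^{\rm gp}, P^{\rm gp})
\end{tikzcd}
\]
is homotopy cartesian, then passes to $(-)_{h\cJ}$ via Lemma~\ref{monstr} and \cite[Corollary 11.4]{SS12}, and finally applies the Bousfield--Friedlander theorem \cite[Theorem B.4]{BF78} to the resulting square of bisimplicial sets. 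The grouplike hypothesis enters there as the $\pi_*$-Kan condition. Your argument instead computes $(-)_{h\cJ}$ of source and target directly, using the section $P^{\rm gp} \to (M^{\rm gp})^{\rm f}$ coming from the augmentation to split the target; this is essentially a shearing argument and avoids Bousfield--Friedlander entirely.

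Two comments. First, what you flag as the ``main obstacle''---that $(-)_{h\cJ}$ preserves the homotopy pullback defining the target---is not an obstacle: it is precisely \cite[Corollary 11.4]{SS12}, which the paper invokes as well, so you should cite it directly rather than appealing to right properness and the comparison with $\bE_\infty$-spaces. Second, your ``direct unraveling'' step deserves one more sentence: having shown both sides are abstractly $P_{h\cJ} \times W(M)_{h\cJ}$, you still need that the map (\ref{splitoffreplete}) realizes this. This follows by checking on fibers over $P_{h\cJ}$, where the induced map is multiplication by the image of $p$ in the grouplike monoid $(M^{\rm gp})^{\rm f}_{h\cJ}$, hence an equivalence. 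The paper's formulation sidesteps this bookkeeping because showing the square is homotopy cartesian \emph{is} showing the comparison map to the pullback is an equivalence; your approach is more elementary but trades the Bousfield--Friedlander machinery for this extra verification.
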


\begin{proof} Let $B^{\boxtimes}(-, -, -)$ denote the two-sided bar construction in ${\cal J}$-spaces. It suffices to show that the square  \[\begin{tikzcd}[row sep = small]B^{\boxtimes}(P, U^{\cal J}, W(M)) \ar{r} \ar{d} & B^{\boxtimes}(P^{\rm gp}, P^{\rm gp}, (M^{\rm gp})^{\rm f}) \ar{d} \\ B^{\boxtimes}(P, U^{\cal J}, U^{\cal J}) \ar{r} & B^{\boxtimes}(P^{\rm gp}, P^{\rm gp}, P^{\rm gp}) \end{tikzcd}\] is homotopy cartesian in the positive ${\cal J}$-model structure. We apply \cite[Corollary 11.4]{SS12}, which states that $(-)_{h{\cal J}}$ detects and preserves homotopy cartesian squares.  Combining this with our cofibrancy hypotheses and Lemma \ref{monstr}, we reduce to proving that the square \begin{equation}\label{bf2}\begin{tikzcd}[row sep = small]B^{\times}(P_{h{\cal J}}, U^{\cal J}_{h{\cal J}}, W(M)_{h{\cal J}}) \ar{r} \ar{d} & B^{\times}(P^{\rm gp}_{h{\cal J}}, P^{\rm gp}_{h{\cal J}}, (M^{\rm gp})^{\rm f}_{h{\cal J}}) \ar{d} \\ B^{\times}(P_{h{\cal J}}, U^{\cal J}_{h{\cal J}}, U^{\cal J}_{h{\cal J}}) \ar{r} & B^{\times}(P^{\rm gp}_{h{\cal J}}, P^{\rm gp}_{h{\cal J}}, P^{\rm gp}_{h{\cal J}}) \end{tikzcd}\end{equation} is a homotopy cartesian square of simplicial sets.

For this we apply the Bousfield--Friedlander theorem \cite[Theorem B.4]{BF78}. The square arises as the realization of a square of bisimplicial sets, and it is clear that this square is pointwise homotopy cartesian. The bisimplicial sets \[B^{\times}_{\bullet}(P^{\rm gp}_{h{\cal J}}, P^{\rm gp}_{h{\cal J}}, (M^{\rm gp})^{\rm f}_{h{\cal J}}) \text{      and      }   B^{\times}_{\bullet}(P^{\rm gp}_{h{\cal J}}, P^{\rm gp}_{h{\cal J}}, P^{\rm gp}_{h{\cal J}}) \] satisfy the $\pi_*$-Kan condition as the simplicial commutative monoids involved are grouplike. Moreover, the map between them induces a Kan fibration on vertical path components, as the map on vertical path components is a surjection of simplicial abelian groups. Hence the Bousfield--Friedlander theorem is applicable to infer that the square (\ref{bf2}) is homotopy cartesian, which concludes the proof. 
\end{proof}

\section{The cyclic bar construction and simplicial tensors}\label{cycbar} 

We review the cyclic and replete bar constructions, and we define topological Hochschild homology as an instance of the cyclic bar construction in the category of symmetric spectra. Our approach to this content largely follows that of \cite[Section 3]{RSS15}, with the appropriate modifications and additions necessary to pass from the absolute context in \emph{loc.\ cit.}\ to the relative one to be discussed here.  

\subsection{The relative cyclic bar construction} Throughout this section we denote by ${\cal M} = ({\cal M}, \boxtimes, U)$ a cocomplete symmetric monoidal category. The category ${\cal C}{\cal M}$ of commutative monoids in ${\cal M}$ is also cocomplete in this case. The coproduct in ${\cal C}{\cal M}$ is given by the symmetric monoidal product in ${\cal M}$. Given a diagram $M \xleftarrow{} P \xrightarrow{} N$ in ${\cal C}{\cal M}$, we write $M \boxtimes_P N$ for its pushout. This is the coproduct of $M$ and $N$ in the category ${\cal C}{\cal M}_{P/}$ of commutative monoids over $P$.

\begin{definition} Let $P \to M$ be a morphism in ${\cal C}{\cal M}$. The \emph{cyclic bar construction} $B^{\rm cy}_P(M)_{\bullet}$ is the following simplicial object in ${\cal C}{\cal M}$: the $q$-simplices are given by the $(1 + q)$-fold coproduct \[M \boxtimes_P M \boxtimes_P \cdots \boxtimes_P M\] of $M$ in ${\cal C}{\cal M}_{P/}$. The simplicial structure maps are informally given as follows (see e.g.\ \cite[Section 3]{RSS15} for the precise definitions): the $i$th face map uses the multiplication map $M \boxtimes_P M \to M$ on the $i$th and $(i + 1)$st factor for as long as it makes sense to do so. The last face map multiplies the last and first factor together by precomposing with one of the symmetry isomorphisms. The $j$th face map inserts the unit between the $(j - 1)$st and $j$th factor. We omit $P$ from the notation when it equals the monoidal unit. 
\end{definition}

Notice that the isomorphisms (\emph{resp.} iterated multiplication maps) \[M \xrightarrow{\cong} M \boxtimes_P P \boxtimes_P \cdots \boxtimes_P P \qquad \text{(\emph{resp. }} M \boxtimes_P M \boxtimes_P \cdots \boxtimes_P M \xrightarrow{} M)\] exhibit $B^{\rm cy}_P(M)_{\bullet}$ as a simplicial object in the pointed catgeory ${\cal C}{\cal M}_{M // M}$ of commutative monoids in ${\cal M}$ over and under $M$. 

\begin{definition}\label{tensors} Let $P \to M$ be a morphism in ${\cal C}{\cal M}$ and let $L$ be an object in ${\cal C}{\cal M}_{K // K}$ for a given commutative monoid $K$ in ${\cal M}$. 
\begin{enumerate}
\item Let $X$ be a finite simplicial set. The \emph{tensor} $X_{\bullet} \otimes_P M$ is defined as the simplicial object $[q] \mapsto M^{\boxtimes_P X_q}$ in ${\cal C}{\cal M}_{P/}$. The $q$-simplices are the $|X_q|$-fold coproduct of $M$ in ${\cal C}{\cal M}_{P/}$, and the simplicial structure arises via that of $X_{\bullet}$, and the multiplication and unit maps of $M$.
\item Let $X = (X, *)$ be a finite \emph{pointed} simplicial set. The \emph{pointed tensor} $X \odot_K L$ is defined by the pushout of the diagram $K \xleftarrow{} L \xrightarrow{} X_{\bullet} \otimes_K L$ in the category of simplicial objects in ${\cal C}{\cal M}_{K // K}$. The map $L \to X_{\bullet} \otimes_K L$ is induced by the basepoint in $X_{\bullet}$. 
\end{enumerate}
Both of the above constructions may be extended to allow for arbitrary simplicial sets by realizing a given set as a colimit of its finite subsets.
\end{definition}

We shall use the geometric realizations of the above constructions in the simplicial model categories ${\cal C}{\cal S}^{\cal J}$ and ${\cal C}{\rm Sp}^{\Sigma}$, in which case we omit the bullets from the notation. In these cases, the unpointed tensor participates in the simplicial structure of the respective positive model structures. In particular, the functors $X \otimes -$ are left Quillen for any simplicial set $X$. 

The following result summarizes the properties that we shall use about the relative cyclic bar construction and its relation to simplicial tensors. We model the simplicial circle by $S^1_{\bullet} := \Delta[1]/\partial\Delta[1]$, which is pointed at its unique $0$-simplex. 

\begin{proposition}\label{prop:cycbar} Let $P \to M$ be a morphism of commutative monoids in ${\cal M}$. There are natural isomorphisms \[P \boxtimes_{B^{\rm cy}(P)_{\bullet}} B^{\rm cy}(M)_{\bullet} \cong B^{\rm cy}_P(M)_{\bullet} \cong S^1_{\bullet} \otimes_P M \cong S^1_{\bullet} \odot_M (M \boxtimes_P M)\] over and under $M$. 
\end{proposition}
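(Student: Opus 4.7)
The plan is to establish each of the three isomorphisms at the level of $q$-simplices by identifying all four simplicial objects with $M^{\boxtimes_P(q+1)}$, and then checking that the face, degeneracy, and cyclic structure maps agree. The main tool throughout is the universal property of pushouts in ${\cal C}{\cal M}$.

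For the first isomorphism, at simplicial level $q$ the pushout $P \boxtimes_{B^{\rm cy}(P)_q} B^{\rm cy}(M)_q$ is the pushout of $P \leftarrow P^{\boxtimes(q+1)} \to M^{\boxtimes(q+1)}$, where the left map is iterated multiplication. A map out of this pushout to a commutative monoid $T$ is the same datum as $q+1$ maps $M \to T$ of commutative $P$-algebras, which is precisely the universal property defining $M^{\boxtimes_P(q+1)} = B^{\rm cy}_P(M)_q$. Since the face and degeneracy maps on $B^{\rm cy}(P)$ and $B^{\rm cy}(M)$ are given by multiplication, unit insertion, and cyclic symmetry---all of which commute with the pushout---the isomorphism is simplicial and respects the over-and-under-$M$ structure.

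For the second isomorphism, I would note that $S^1_q = \Delta[1]_q / \partial\Delta[1]_q$ has exactly $q+1$ elements (one being the basepoint), so the $q$-simplices of the unpointed tensor are $S^1_q \otimes_P M = M^{\boxtimes_P(q+1)}$, matching those of $B^{\rm cy}_P(M)$. The action of the simplicial operators of $S^1$ on this indexing set matches the face and degeneracy maps of $B^{\rm cy}_P(M)$: inner face maps identify adjacent indices (corresponding to multiplying adjacent factors), the last face map $d_q$ identifies the top index with the basepoint (corresponding to multiplying the last factor into the first via the symmetry isomorphism), and degeneracies repeat an index (corresponding to inserting the unit of $M$). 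This is the standard identification of the cyclic bar construction with the $S^1$-tensor in the commutative setting.

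For the third isomorphism, I would unwind the pointed tensor as the pushout $M \boxtimes_{M \boxtimes_P M} \bigl( S^1 \otimes_M (M \boxtimes_P M) \bigr)$. Iterating $(M \boxtimes_P M) \boxtimes_M (M \boxtimes_P M) \cong M^{\boxtimes_P 3}$ yields $(M \boxtimes_P M)^{\boxtimes_M (q+1)} \cong M^{\boxtimes_P (q+2)}$ at level $q$, where the $M$-algebra structure is via a distinguished first factor and the basepoint of $S^1_q$ selects one of the remaining $q+1$ factors. Collapsing this selected pair via the augmentation $M \boxtimes_P M \to M$ identifies the selected factor with the distinguished first factor, producing $M^{\boxtimes_P(q+1)}$, as required. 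The main obstacle I anticipate is the bookkeeping for the simplicial operators---particularly the interaction of the cyclic face map $d_q$ with the monoidal symmetry isomorphism, and tracking the basepoint-indexed factor compatibly across dimensions in the pointed tensor---but these compatibilities are forced by the respective universal properties once the indexing of factors by $S^1_q$ is fixed.
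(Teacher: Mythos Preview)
Your proposal is correct and follows essentially the same approach as the paper. The paper's own proof is considerably terser---it verifies the first isomorphism by commuting colimits (illustrated on $1$-simplices), cites \cite[Lemma 3.3]{RSS15} for the second, and says the third ``similarly follows from the definition of the pointed tensor''---but your explicit identification of the $q$-simplices with $M^{\boxtimes_P(q+1)}$ via universal properties, together with the bookkeeping for the simplicial structure maps, is precisely what underlies those references.
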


\begin{proof} The first isomorphism follows from the definition of the cyclic bar construction. For example, for the $1$-simplices, this is the isomorphism \[P \boxtimes_{P \boxtimes_U P} (M \boxtimes_U M) \cong (P \boxtimes_P P) \boxtimes_{P \boxtimes_U P} (M \boxtimes_U M) \cong M \boxtimes_P M,\] where the last isomorphism follows from commuting colimits.  For the second isomorphism, the argument given in \cite[Lemma 3.3]{RSS15} applies also for this relative construction, while the third similarly follows from the definition of the pointed tensor. 
\end{proof}

\subsection{Group completion of the cyclic bar construction} We will use the fact that group completion interacts well with the cyclic bar construction of commutative ${\cal J}$-space monoids.

\begin{lemma}\label{chainovergp} Let $P \to M$ be a cofibration of cofibrant commutative ${\cal J}$-space monoids. There is a chain of ${\cal J}$-equivalences under $B^{\rm cy}_P(M)$ and over $M^{\rm gp}$ relating $B^{\rm cy}_{P^{\rm gp}}(M^{\rm gp})$ and $B^{\rm cy}_P(M)^{\rm gp}$. 
\end{lemma}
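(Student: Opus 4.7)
My plan is to exhibit $B^{\rm cy}_{P^{\rm gp}}(M^{\rm gp})$ as a model for the group completion of $B^{\rm cy}_P(M)$, and then invoke uniqueness of group completion up to $\mathcal{J}$-equivalence. The entire argument reduces the absolute case of the lemma to the relative case via Lemma \ref{gpcomppush} applied level-wise, followed by a Reedy-realization argument.

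First, I will analyze the natural comparison map at each simplicial level. The $q$-simplices of $B^{\rm cy}_P(M)_\bullet$ and $B^{\rm cy}_{P^{\rm gp}}(M^{\rm gp})_\bullet$ are iterated pushouts $M^{\boxtimes_P(1+q)}$ and $(M^{\rm gp})^{\boxtimes_{P^{\rm gp}}(1+q)}$, respectively. Iterating Lemma \ref{gpcomppush} (together with the cofibrancy preservation needed to apply it repeatedly), the canonical comparison $(M^{\rm gp})^{\boxtimes_{P^{\rm gp}}(1+q)} \to (M^{\boxtimes_P(1+q)})^{\rm gp}$ is a $\mathcal{J}$-equivalence, and as in the proof of that lemma the target is grouplike. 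Combined with the definitional group completion equivalence $N \to N^{\rm gp}$, this shows that the level-wise map $M^{\boxtimes_P(1+q)} \to (M^{\rm gp})^{\boxtimes_{P^{\rm gp}}(1+q)}$ is a weak equivalence in the group completion model structure, with grouplike target.

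Next, I will pass to geometric realization. Under the cofibrancy hypothesis on $P \to M$, the simplicial object $B^{\rm cy}_P(M)_\bullet$ is Reedy cofibrant in the positive $\mathcal{J}$-model structure. As in the discussion preceding Lemma \ref{gpcomppush}, one may arrange the group completion factorizations so that $P^{\rm gp} \to M^{\rm gp}$ is also a cofibration of cofibrant objects, so that $B^{\rm cy}_{P^{\rm gp}}(M^{\rm gp})_\bullet$ is Reedy cofibrant as well. Since the group completion model structure shares its cofibrations with the positive $\mathcal{J}$-model structure, geometric realization is left Quillen in either structure; hence the level-wise group completion equivalence realizes to a group completion equivalence $B^{\rm cy}_P(M) \to B^{\rm cy}_{P^{\rm gp}}(M^{\rm gp})$. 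Moreover, grouplikeness is inherited upon realization, since $\pi_0\bigl((-)_{h\mathcal{J}}\bigr)$ of a proper simplicial object is the coequalizer of $\pi_0$ on the first two levels and a coequalizer of grouplike abelian monoids is grouplike.

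Finally, I will compare $B^{\rm cy}_{P^{\rm gp}}(M^{\rm gp})$ and $B^{\rm cy}_P(M)^{\rm gp}$ through a lifting argument. Choose a fibrant replacement $B^{\rm cy}_{P^{\rm gp}}(M^{\rm gp}) \to B^{\rm cy}_{P^{\rm gp}}(M^{\rm gp})^{\rm f}$ in the group completion model structure; by the previous step this is already a $\mathcal{J}$-equivalence. The trivial cofibration $B^{\rm cy}_P(M) \rightarrowtail B^{\rm cy}_P(M)^{\rm gp}$ has the left lifting property against this fibrant object, producing a map $B^{\rm cy}_P(M)^{\rm gp} \to B^{\rm cy}_{P^{\rm gp}}(M^{\rm gp})^{\rm f}$ under $B^{\rm cy}_P(M)$. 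By two-out-of-three it is a group completion equivalence between grouplike objects, and hence a $\mathcal{J}$-equivalence. This yields the zig-zag
\[ B^{\rm cy}_P(M)^{\rm gp} \xrightarrow{\simeq} B^{\rm cy}_{P^{\rm gp}}(M^{\rm gp})^{\rm f} \xleftarrow{\simeq} B^{\rm cy}_{P^{\rm gp}}(M^{\rm gp}) \]
under $B^{\rm cy}_P(M)$ and, by the naturality of augmentations, over $M^{\rm gp}$. The main point requiring care is the Reedy-level bookkeeping in Step 2: one must verify that cofibrancy of $P \to M$ is preserved by the chosen group completion factorization, so that the relative pushout-powers $(M^{\rm gp})^{\boxtimes_{P^{\rm gp}}(1+q)}$ remain homotopically meaningful and Lemma \ref{gpcomppush} iterates cleanly.
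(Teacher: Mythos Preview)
Your argument is essentially correct but takes a somewhat different route from the paper's. The paper does not work level-wise: instead it uses Proposition~\ref{prop:cycbar} to rewrite $B^{\rm cy}_{P^{\rm gp}}(M^{\rm gp}) \cong P^{\rm gp}\boxtimes_{B^{\rm cy}(P^{\rm gp})}B^{\rm cy}(M^{\rm gp})$ as a single pushout, invokes the absolute case $B^{\rm cy}(P^{\rm gp})$ grouplike from \cite[Lemma 3.19]{RSS15}, and concludes directly that the realized comparison map is a group-completion equivalence with grouplike target. This bypasses the Reedy bookkeeping entirely. Your level-wise iteration of Lemma~\ref{gpcomppush} followed by realization is a valid alternative and is more self-contained, at the cost of the extra care you flag in Step~2.

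There is one genuine gap in your Step~3. You take the fibrant replacement $B^{\rm cy}_{P^{\rm gp}}(M^{\rm gp})^{\rm f}$ over the terminal object and then lift against it; the resulting map $B^{\rm cy}_P(M)^{\rm gp}\to B^{\rm cy}_{P^{\rm gp}}(M^{\rm gp})^{\rm f}$ is under $B^{\rm cy}_P(M)$, but there is no reason it lies over $M^{\rm gp}$. ``Naturality of augmentations'' does not help here: a lift in a lifting problem is a choice, not a natural transformation, so compatibility with the augmentation to $M^{\rm gp}$ is not automatic. The paper handles this by instead factoring the \emph{augmentation} $B^{\rm cy}_{P^{\rm gp}}(M^{\rm gp})\to M^{\rm gp}$ as an acyclic cofibration followed by a fibration in the group completion model structure, and then lifting in the square with $M^{\rm gp}$ in the lower right corner. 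That forces the lift to be over $M^{\rm gp}$ by construction. With this correction your argument goes through.
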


\begin{proof} In \cite[Proof of Lemma 3.19]{RSS15}, it is proven that $S^1 \otimes P^{\rm gp} \cong B^{\rm cy}(P^{\rm gp})$ is grouplike since $P^{\rm gp}$ is. Hence this is also the case for $B^{\rm cy}_{P^{\rm gp}}(M^{\rm gp})$, as it is isomorphic to $P^{\rm gp} \boxtimes_{B^{\rm cy}(P^{\rm gp})} B^{\rm cy}(M^{\rm gp})$ by Proposition \ref{prop:cycbar}. Since $P \to M$ is assumed to be a cofibration, there is no loss of generality in assuming that $P^{\rm gp} \to M^{\rm gp}$ is a cofibration as well. In particular, $B^{\rm cy}(P^{\rm gp}) \to B^{\rm cy}(M^{\rm gp})$ is a cofibration in this case, and so the morphism \[B^{\rm cy}_P(M) \cong P \boxtimes_{B^{\rm cy}(P)} B^{\rm cy}(M) \xrightarrow{} P^{\rm gp} \boxtimes_{B^{\rm cy}(P^{\rm gp})} B^{\rm cy}(M^{\rm gp}) \cong B^{\rm cy}_{P^{\rm gp}}(M^{\rm gp})\] is a weak equivalence in the group completion model structure. The lifting properties of the group completion model structure provides the dashed arrow in the diagram \[\begin{tikzcd}[row sep = small]B^{\rm cy}_P(M) \ar[tail]{d}{\simeq_{\rm gp}} \ar{r}{\simeq_{\rm gp}} & B^{\rm cy}_{P^{\rm gp}}(M^{\rm gp}) \ar[tail]{r}{\simeq_{\rm gp}} & (B^{\rm cy}_{P^{\rm gp}}(M^{\rm gp}))'\ar[two heads]{d} \\ B^{\rm cy}_P(M)^{\rm gp} \ar{rr} \ar[dashed]{urr} & & M^{\rm gp}, \end{tikzcd}\] which is a weak equivalence in this model structure. Here $(B^{\rm cy}_{P^{\rm gp}}(M^{\rm gp}))'$ appears in the indicated factorization of the augmentation $B^{\rm cy}_{P^{\rm gp}}(M^{\rm gp}) \to M^{\rm gp}$. This concludes the proof, as weak equivalences in the group completion model structure between grouplike commutative ${\cal J}$-space monoids are ${\cal J}$-equivalences.
\end{proof}

\subsection{Topological Hochschild homology} We use the cyclic bar construction to model topological Hochschild homology:

\begin{definition} Let $R \to A$ be a cofibration of cofibrant commutative symmetric ring spectra. The \emph{topological Hochschild homology} ${\rm THH}^R(A)$ is the commutative symmetric ring spectrum $B^{\rm cy}_R(A)$ given by the geometric realization of the cyclic bar construction $A$ relative to $R$ in symmetric spectra.
\end{definition}

When $R = {\Bbb S}$ is the sphere spectrum, we shall simply write ${\rm THH}(A)$ in place of ${\rm THH}^{\Bbb S}(A)$. We remark that it follows from strong symmetric monoidality of the functor ${\Bbb S}^{\cal J}[-]$ that ${\rm THH}^{{\Bbb S}^{\cal J}[P]}({\Bbb S}^{\cal J}[M]) \cong {\Bbb S}^{\cal J}[B^{\rm cy}_P(M)]$. We deduce the following from Proposition \ref{prop:cycbar}:

\begin{proposition}\label{thhsuspension} Let $R \to A$ be a cofibration of cofibrant commutative symmetric ring spectra. There is a chain of natural isomorphisms \[R \wedge_{{\rm THH}(R)} {\rm THH}(A) \cong {\rm THH}^R(A) \cong S^1 \otimes_R A \cong S^1 \odot_A (A \wedge_R A)\] of augmented commutative $A$-algebras. \qed
\end{proposition}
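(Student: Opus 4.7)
The proposition is the categorical analogue of Proposition~\ref{prop:cycbar} in the symmetric monoidal category $(\mathrm{Sp}^{\Sigma}, \wedge, \mathbb{S})$. The plan is therefore to apply that proposition levelwise and pass to geometric realization.

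First I would invoke Proposition~\ref{prop:cycbar} in the symmetric monoidal category $\mathcal{M} = \mathrm{Sp}^{\Sigma}$ with $P = R$ and $M = A$. This yields a natural chain of isomorphisms of simplicial objects in $\mathcal{C}\mathrm{Sp}^{\Sigma}_{A//A}$:
\[R \wedge_{B^{\rm cy}(R)_{\bullet}} B^{\rm cy}(A)_{\bullet} \cong B^{\rm cy}_R(A)_{\bullet} \cong S^1_{\bullet} \otimes_R A \cong S^1_{\bullet} \odot_A (A \wedge_R A).\]
Since Proposition~\ref{prop:cycbar} is formulated for an arbitrary cocomplete symmetric monoidal category, no further work is required at the simplicial level; the hypothesis that $R \to A$ is a cofibration of cofibrant objects plays no role here and will only matter for later homotopical applications of the result.

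Next I would take geometric realization of this chain of simplicial isomorphisms. By definition $\mathrm{THH}(R) = |B^{\rm cy}(R)_{\bullet}|$, $\mathrm{THH}(A) = |B^{\rm cy}(A)_{\bullet}|$, and $\mathrm{THH}^R(A) = |B^{\rm cy}_R(A)_{\bullet}|$. Geometric realization is a left adjoint and the smash product $\wedge$ commutes with colimits in each variable, so realization commutes with the pushout $R \wedge_{B^{\rm cy}(R)_{\bullet}} B^{\rm cy}(A)_{\bullet}$, and we obtain
\[R \wedge_{\mathrm{THH}(R)} \mathrm{THH}(A) \cong |R \wedge_{B^{\rm cy}(R)_{\bullet}} B^{\rm cy}(A)_{\bullet}|.\]
For the remaining terms, the bullet-free notations $S^1 \otimes_R A$ and $S^1 \odot_A (A \wedge_R A)$ denote precisely the realizations of the corresponding simplicial tensors as noted in the text following Definition~\ref{tensors}. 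Thus realizing the four-term isomorphism above produces the desired chain in $\mathcal{C}\mathrm{Sp}^{\Sigma}_{A//A}$.

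Finally, naturality and compatibility with the augmented $A$-algebra structure are already part of the statement of Proposition~\ref{prop:cycbar} (which is formulated over and under $M$), and both properties are preserved by realization. I do not foresee a genuine obstacle here: the only point requiring care is the commutation of $|-|$ with the pushout defining the smash product over $\mathrm{THH}(R)$, but this is immediate from the left adjoint character of geometric realization.
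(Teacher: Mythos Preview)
Your proposal is correct and matches the paper's approach exactly: the paper simply states that the proposition is deduced from Proposition~\ref{prop:cycbar} and marks it with a \qed, leaving the passage to geometric realization implicit. Your write-up makes explicit precisely the steps the paper omits, namely specializing Proposition~\ref{prop:cycbar} to $\mathcal{M}=\mathrm{Sp}^{\Sigma}$ and using that realization is a left adjoint to pass from the simplicial identifications to the realized ones.
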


Here $S^1 \odot_A -$ is the pointed simplicial tensor with $S^1$ in the pointed category ${\cal C}{\rm Sp}^{\Sigma}_{A // A}$ of commutative augmented $A$-algebras. This is a model for the suspension functor in ${\cal C}{\rm Sp}^{\Sigma}_{A // A}$. As we explain in Section \ref{lastsection}, this structure is essential for the proof of Theorem \ref{mainthm}.

\subsection{The relative replete bar construction} We now discuss the repletion of the augmented commutative ${\cal J}$-space monoid \[M \to B^{\rm cy}_P(M) \to M.\] Due to its central role, we single out this case as a seperate definition:

\begin{definition} Let $P \to M$ be a cofibration of cofibrant commutative ${\cal J}$-space monoids. The \emph{replete bar construction} $B^{\rm cy}_P(M)^{\rm rep}$ is the repletion of the augmentation map $B^{\rm cy}_P(M) \to M$. 
\end{definition}

This is equivalent (but not equal) to the replete bar construction discussed in \cite{RSS15}, see Remark \ref{logthhcircle}.

The isomorphism $B^{\rm cy}(P) \cong S^1 \otimes P$ of Proposition \ref{prop:cycbar} gives that the map $B^{\rm cy}(P) \to B^{\rm cy}(M)$ is a cofibration if $P \to M$ is. Therefore there is no loss of generality in assuming that $B^{\rm cy}(P)^{\rm rep} \to B^{\rm cy}(M)^{\rm rep}$ is a cofibration as well. We now wish to prove an analogue of the isomorphism $P \boxtimes_{B^{\rm cy}(P)} B^{\rm cy}(M) \cong B_P^{\rm cy}(M)$ of Proposition \ref{prop:cycbar} for the replete bar construction. The universal property of the pushout provides a natural map $P \boxtimes_{B^{\rm cy}(P)^{\rm rep}} B^{\rm cy}(M)^{\rm rep} \xrightarrow{} (P 
\boxtimes_{B^{\rm cy}(P)} B^{\rm cy}(M))^{\rm rep}$ of commutative ${\cal J}$-space monoids.

\begin{lemma}\label{repbarjuggling} Let $P \to M$ be as above. There are ${\cal J}$-equivalences \[P \boxtimes_{B^{\rm cy}(P)^{\rm rep}} B^{\rm cy}(M)^{\rm rep} \xrightarrow{\simeq} (P \boxtimes_{B^{\rm cy}(P)} B^{\rm cy}(M))^{\rm rep} \xrightarrow{\cong} B^{\rm cy}_P(M)^{\rm rep}\] under $B^{\rm cy}_P(M)$ and over $M$.  
\end{lemma}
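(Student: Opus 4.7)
The second arrow is an isomorphism: it follows from the natural isomorphism $P \boxtimes_{B^{\rm cy}(P)} B^{\rm cy}(M) \cong B^{\rm cy}_P(M)$ of Proposition \ref{prop:cycbar} together with functoriality of the repletion construction. The substantive content is the first arrow.

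My plan is to recognize both the domain $X := P \boxtimes_{B^{\rm cy}(P)^{\rm rep}} B^{\rm cy}(M)^{\rm rep}$ and the codomain $Y := B^{\rm cy}_P(M)^{\rm rep}$ as models for the repletion of the augmentation $B^{\rm cy}_P(M) \to M$. First I show that the canonical map $B^{\rm cy}_P(M) \to X$ is a weak equivalence in the group completion model structure. This map arises from the comparison of spans $(P \leftarrow B^{\rm cy}(P) \to B^{\rm cy}(M)) \longrightarrow (P \leftarrow B^{\rm cy}(P)^{\rm rep} \to B^{\rm cy}(M)^{\rm rep})$, which is the identity on $P$ and an acyclic cofibration in the group completion model structure on each of the other two components. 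Since $B^{\rm cy}(-) \cong S^1 \otimes -$ is left Quillen, $B^{\rm cy}(P) \to B^{\rm cy}(M)$ is a cofibration given the hypothesis on $P \to M$, and $B^{\rm cy}(P)^{\rm rep} \to B^{\rm cy}(M)^{\rm rep}$ is likewise a cofibration by the remark preceding the lemma. Left-properness of the group completion model structure then yields that $B^{\rm cy}_P(M) \to X$ is a weak equivalence in that model structure. Since $B^{\rm cy}_P(M) \to Y$ is a group-completion acyclic cofibration by the definition of repletion, 2-of-3 gives that the first arrow $X \to Y$ in the lemma is a group-completion weak equivalence.

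To upgrade this to a ${\cal J}$-equivalence, I apply the homotopy cartesian square characterization of Lemma \ref{repletepullback}. The codomain $Y$ satisfies this directly. For $X$, Lemma \ref{gpcomppush} identifies the upper-right corner $X^{\rm gp}$ with $P^{\rm gp} \boxtimes_{B^{\rm cy}(P)^{\rm gp}} B^{\rm cy}(M)^{\rm gp} \simeq Y^{\rm gp}$, with the induced map $X^{\rm gp} \to Y^{\rm gp}$ a ${\cal J}$-equivalence. Granting that the analogous square for $X$ is homotopy cartesian, the induced map of squares $X \to Y$ is an equivalence on three of four corners (upper-right via group completion, identity on $M$ and $M^{\rm gp}$), hence also on the fourth, which is the map in question.

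The main obstacle is verifying that the square for $X$ is homotopy cartesian. I would attack this through Lemma \ref{repleterewrite}, which permits the rewritings $B^{\rm cy}(P)^{\rm rep} \simeq P \boxtimes W(B^{\rm cy}(P))$ and $B^{\rm cy}(M)^{\rm rep} \simeq M \boxtimes W(B^{\rm cy}(M))$, whereupon a direct manipulation of the defining pushout presents $X$ as $M \boxtimes (W(B^{\rm cy}(M)) \boxtimes_{W(B^{\rm cy}(P))} U^{\cal J})$. Comparing with the analogous presentation $Y \simeq M \boxtimes W(B^{\rm cy}_P(M))$, the question reduces to a compatibility of the $W(-)$ construction of Definition \ref{w} with the pushout. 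This compatibility is expected since $W(-)$ is built from a homotopy pullback involving group completions, and Lemma \ref{gpcomppush} asserts that group completion commutes with the relevant pushouts; the verification can be carried out after applying $(-)_{h{\cal J}}$ and invoking Lemma \ref{monstr} together with the Bousfield--Friedlander theorem, in the same manner as in the proof of Lemma \ref{repleterewrite}.
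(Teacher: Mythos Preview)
Your reduction is the same as the paper's: both arguments identify the second arrow as an isomorphism and reduce the first arrow to showing that the square
\[
\begin{tikzcd}[row sep = small]
P \boxtimes_{B^{\rm cy}(P)^{\rm rep}} B^{\rm cy}(M)^{\rm rep} \ar{r} \ar{d} & P^{\rm gp} \boxtimes_{B^{\rm cy}(P)^{\rm gp}} B^{\rm cy}(M)^{\rm gp} \ar{d} \\
M \ar{r} & M^{\rm gp}
\end{tikzcd}
\]
is homotopy cartesian, after which comparison with the square of Lemma~\ref{repletepullback} for $B^{\rm cy}_P(M)^{\rm rep}$ finishes. Your preliminary argument that $X \to Y$ is a group-completion weak equivalence is correct but redundant: once the square is shown to be homotopy cartesian, the ${\cal J}$-equivalence follows directly, so you never need to invoke left properness of the group completion model structure.

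The genuine divergence is in how you verify the square is homotopy cartesian. The paper models the pushouts by two--sided bar constructions $B^{\boxtimes}(-,-,-)$ and observes that the resulting square of simplicial objects is levelwise homotopy cartesian because the repletion squares for $B^{\rm cy}(P)$ and $B^{\rm cy}(M)$ are; the Bousfield--Friedlander theorem then applies immediately. Your route through Lemma~\ref{repleterewrite} and the decomposition $X \simeq M \boxtimes (W(B^{\rm cy}(M)) \boxtimes_{W(B^{\rm cy}(P))} U^{\cal J})$ would also work, but it is more circuitous and requires an extra ingredient you leave implicit: the equivalence of Lemma~\ref{repleterewrite} must be shown to be natural in the augmented object, so that the map $B^{\rm cy}(P)^{\rm rep} \to B^{\rm cy}(M)^{\rm rep}$ really corresponds to a product map $P \boxtimes W_P \to M \boxtimes W_M$. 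This is true but not stated in the paper, and establishing it is roughly the same amount of work as the bar-construction argument. The paper's approach is shorter and avoids this naturality check.
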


\begin{proof} The second map in the composite is an isomorphism since it arises from applying $(-)^{\rm rep}$ to the isomorphism $P \boxtimes_{B^{\rm cy}(P)} B^{\rm cy}(M) \cong B^{\rm cy}_P(M)$ of Proposition~\ref{prop:cycbar}. Applying the functor $(-)^{\rm gp}$ to this isomorphism and Lemma \ref{gpcomppush}, we see that it suffices by Lemma \ref{repletepullback} to argue that \begin{equation}\label{eqrepl2}\begin{tikzcd}[row sep = small]P \boxtimes_{B^{\rm cy}(P)^{\rm rep}} B^{\rm cy}(M)^{\rm rep} \ar{r} \ar{d} & P^{\rm gp} \boxtimes_{B^{\rm cy}(P)^{\rm gp}} B^{\rm cy}(M)^{\rm gp} \ar{d} \\ M \ar{r} & M^{\rm gp} \end{tikzcd}\end{equation} is homotopy cartesian. To see this, we model homotopy pushouts of commutative ${\cal J}$-space monoids by the two-sided bar construction $B^{\boxtimes}(-, -, -)$, and notice that the square in question may be rewritten as \[\begin{tikzcd}[row sep = small]B^{\boxtimes}(P, B^{\rm cy}(P)^{\rm rep}, B^{\rm cy}(M)^{\rm rep}) \ar{r} \ar{d} & B^{\boxtimes}(P^{\rm gp}, B^{\rm cy}(P)^{\rm gp}, B^{\rm cy}(M)^{\rm gp}) \ar{d} \\ B^{\boxtimes}(P, P, M) \ar{r} & B^{\boxtimes}(P^{\rm gp}, P^{\rm gp}, M^{\rm gp}).\end{tikzcd}\] Since the squares \[\begin{tikzcd}[row sep = small]B^{\rm cy}(P)^{\rm rep} \ar{r} \ar{d} & B^{\rm cy}(P)^{\rm gp} \ar{d} \\ P \ar{r} & P^{\rm gp}\end{tikzcd} \text{ and } \begin{tikzcd}[row sep = small]B^{\rm cy}(M)^{\rm rep} \ar{r} \ar{d} & B^{\rm cy}(M)^{\rm gp} \ar{d} \\ M \ar{r} & M^{\rm gp}\end{tikzcd}\] are homotopy cartesian, we may argue with the Bousfield--Friedlander theorem as in the proof of Lemma \ref{repleterewrite} to conclude that (\ref{eqrepl2}) is homotopy cartesian. \end{proof}

We now provide an analogue of the isomorphism $B^{\rm cy}_P(M) \cong S^1 \odot_M (M \boxtimes_P M)$ of Proposition \ref{prop:cycbar}, which is an essential ingredient in the description of logarithmic ${\rm THH}$ as a suspension in a category of augmented algebras given in Proposition \ref{logthhsuspension}. 

\begin{proposition}\label{prop:repbarsuspension} There is a chain of ${\cal J}$-equivalences \[S^1 \odot_M (M \boxtimes_P M)^{\rm rep} \simeq (S^1 \odot_M (M \boxtimes_P M))^{\rm rep} \xrightarrow{\cong} B^{\rm cy}_P(M)^{\rm rep}\] under $B^{\rm cy}(M)$ and over $M$, where all repletions are taken with respect to the natural augmentations to $M$. 
\end{proposition}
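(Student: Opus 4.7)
The second map in the displayed chain is an isomorphism, obtained by applying $(-)^{\rm rep}$ (with respect to the augmentations to $M$) to the natural isomorphism $S^1 \odot_M (M \boxtimes_P M) \cong B^{\rm cy}_P(M)$ of Proposition \ref{prop:cycbar}. The substantive task is thus to produce the first ${\cal J}$-equivalence. Writing $L = M \boxtimes_P M$ with its natural augmentation to $M$, this amounts to showing that repletion over $M$ commutes, up to ${\cal J}$-equivalence, with the pointed simplicial tensor $S^1 \odot_M -$ applied to $L$.

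First I would produce a natural comparison map
\[
\alpha \colon S^1 \odot_M L^{\rm rep} \longrightarrow (S^1 \odot_M L)^{\rm rep}
\]
in ${\cal C}{\cal S}^{\cal J}_{M//M}$ via the lifting property of the group completion model structure. Applying $S^1 \odot_M -$ to the acyclic cofibration $L \rightarrowtail L^{\rm rep}$ produces a cofibration $S^1 \odot_M L \rightarrowtail S^1 \odot_M L^{\rm rep}$, which is additionally a group-completion equivalence: this follows by iteratively applying Lemma \ref{gpcomppush} in the pushouts defining the pointed tensor, together with the fact that group completion commutes with geometric realization and that $(L^{\rm rep})^{\rm gp} \to L^{\rm gp}$ is a ${\cal J}$-equivalence (Lemma \ref{repletepullback}). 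Lifting against the fibration $(S^1 \odot_M L)^{\rm rep} \twoheadrightarrow M$ then produces $\alpha$, and two-out-of-three shows that $\alpha$ is itself a group-completion equivalence.

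The main step is to upgrade $\alpha$ to a ${\cal J}$-equivalence. By Lemma \ref{repletepullback}, the codomain of $\alpha$ fits into a homotopy cartesian square over $M \to M^{\rm gp}$ with top-right corner $(S^1 \odot_M L)^{\rm gp}$. I would show that the analogous square
\[
\begin{tikzcd}[row sep = small]
S^1 \odot_M L^{\rm rep} \ar{r} \ar{d} & (S^1 \odot_M L^{\rm rep})^{\rm gp} \ar{d} \\
M \ar{r} & M^{\rm gp}
\end{tikzcd}
\]
is also homotopy cartesian in the positive ${\cal J}$-model structure. Since $\alpha$ is a group-completion equivalence, the induced map on top-right corners is a ${\cal J}$-equivalence of grouplike objects; then right properness together with \cite[Proposition 13.3.4]{Hir03} forces $\alpha$ itself to be a ${\cal J}$-equivalence.

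The main obstacle will be verifying the homotopy-cartesian property of this square for $S^1 \odot_M L^{\rm rep}$. I plan to follow the Bousfield--Friedlander strategy used in the proof of Lemma \ref{repleterewrite}: realize the square as the geometric realization of a bisimplicial square (coming from the simplicial direction of $S^1_\bullet$ together with the pushout defining the pointed tensor), apply Lemma \ref{monstr} to translate $\boxtimes_M$-pushouts into products on homotopy colimits over ${\cal J}$, and invoke the Bousfield--Friedlander theorem \cite[Theorem B.4]{BF78}. Levelwise the square reduces to the homotopy-cartesian square for $L^{\rm rep}$ itself (Lemma \ref{repletepullback}); the $\pi_*$-Kan condition together with the Kan-fibration condition on vertical path components required by Bousfield--Friedlander will follow as in \emph{loc.\ cit.}\ from grouplikeness of the relevant commutative ${\cal J}$-space monoids after group completion and virtual surjectivity of the augmentations.
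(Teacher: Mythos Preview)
Your approach is essentially the paper's: both arguments reduce to showing that $S^1 \odot_M L^{\rm rep}$ (with $L = M \boxtimes_P M$) sits in a homotopy cartesian square over $M \to M^{\rm gp}$ with grouplike top-right corner, and then compare with the defining square for $(S^1 \odot_M L)^{\rm rep}$ from Lemma~\ref{repletepullback}. The paper organizes this into two explicit steps: (1) the square with top-right corner $S^1 \odot_{M^{\rm gp}} L^{\rm gp}$ is homotopy cartesian, via an inductive Bousfield--Friedlander argument; and (2) that corner is ${\cal J}$-equivalent to $(S^1 \odot_M L)^{\rm gp}$, via Lemma~\ref{chainovergp} and Lemma~\ref{gpcomppush}.

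There is one point where your sketch needs more care. You propose to run Bousfield--Friedlander on the square whose top-right corner is the abstract group completion $(S^1 \odot_M L^{\rm rep})^{\rm gp}$, but this object is not \emph{a priori} the geometric realization of anything: the group completion is applied \emph{after} the realization defining the pointed tensor, so there is no evident bisimplicial square to which \cite[Theorem~B.4]{BF78} applies, and ``levelwise'' has no meaning for the right-hand column as you have written it. To proceed you must first identify $(S^1 \odot_M L^{\rm rep})^{\rm gp}$ with a genuinely simplicial object such as $S^1 \odot_{M^{\rm gp}} L^{\rm gp}$. You gesture at exactly this when constructing $\alpha$ (group completion commutes with the pushouts building the pointed tensor by Lemma~\ref{gpcomppush}, and the realization of grouplike objects is grouplike), but you should make the identification explicit and use it to \emph{replace} the right-hand column before invoking Bousfield--Friedlander. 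This is precisely the content of the paper's step (2). Once that is done, your levelwise check against Lemma~\ref{repletepullback} and the verification of the $\pi_*$-Kan and fibration hypotheses go through as in the paper.
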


\begin{proof} The second map is an isomorphism since it arises from applying $(-)^{\rm rep}$ to the isomorphism of Proposition \ref{prop:cycbar}. By Lemma \ref{repletepullback} and Remark \ref{abuse}, we know that there is a homotopy cartesian square \begin{equation}\label{newproofsquare1}\begin{tikzcd}[row sep = small](S^1 \odot_M (M \boxtimes_P M))^{\rm rep} \ar{r} \ar{d} & (S^1 \odot_M (M \boxtimes_P M))^{\rm gp} \ar{d} \\ M \ar{r} & M^{\rm gp}\end{tikzcd}\end{equation} with respect to the positive ${\cal J}$-model structure.  We observe that there are ${\cal J}$-equivalences \begin{align*}(S^1 \odot_M (M \boxtimes_P M))^{\rm gp} & \cong (B^{\rm cy}_P(M))^{\rm gp}  \simeq B^{\rm cy}_{P^{\rm gp}}(M^{\rm gp})  \\ & \xleftarrow{\cong} S^1 \odot_{M^{\rm gp}}  (M^{\rm gp} \boxtimes_{P^{\rm gp}} M^{\rm gp}) \xrightarrow{\simeq} S^1 \odot_{M^{\rm gp}} (M \boxtimes_P M)^{\rm gp}\end{align*} over $M^{\rm gp}$. Here the first isomorphism arises from applying $(-)^{\rm gp}$ to the isomorphism of Proposition \ref{prop:cycbar}. The chain of $\cal J$-equivalences $(B^{\rm cy}_P(M))^{\rm gp} \simeq B^{\rm cy}_{P^{\rm gp}}(M^{\rm gp})$ refers to the chain constructed in Lemma \ref{chainovergp}, the following isomorphism is another application of Proposition \ref{prop:cycbar}, while the last ${\cal J}$-equivalence comes from applying the left Quillen functor $S^1 \odot_{M^{\rm gp}} -$ to the ${\cal J}$-equivalence of cofibrant objects $M^{\rm gp} \boxtimes_{P^{\rm gp}} M^{\rm gp} \xrightarrow{\simeq} (M \boxtimes_P M)^{\rm gp}$ from  Lemma \ref{gpcomppush}. 

Following the same strategy as in the proof of Lemma \ref{repleterewrite}, we observe that the square \begin{equation}\label{newproofsquare2}\begin{tikzcd}[row sep = small] B^\boxtimes(M, (M \boxtimes_P M)^{\rm rep}, M) \ar{r} \ar{d} & B^\boxtimes(M^{\rm gp}, (M \boxtimes_P M)^{\rm gp}, M^{\rm gp}) \ar{d} \\ B^{\boxtimes}(M, M, M) \ar{r} & B^\boxtimes(M^{\rm gp}, M^{\rm gp}, M^{\rm gp})\end{tikzcd}\end{equation} is homotopy cartesian by the Bousfield--Friedlander theorem, Lemma \ref{repletepullback} and Remark \ref{abuse}. Since the two-sided bar construction $B^{\boxtimes}(M^{\rm gp}, (M \boxtimes_P M)^{\rm gp}, M^{\rm gp})$ is a model for the suspension $S^1 \odot_{M^{\rm gp}} (M \boxtimes_P M)^{\rm gp}$, the result follows by comparing the homotopy cartesian squares (\ref{newproofsquare1}) and (\ref{newproofsquare2}). 
\end{proof}

\section{Logarithmic ring spectra}\label{logringspectra} We introduce the necessary background material on logarithmic ring spectra. Our main references for this section are \cite[Section 4]{RSS15} and \cite[Section 4]{Sag14}. 

\begin{definition} A \emph{pre-logarithmic ring spectrum} $(A, M) = (A, M, \alpha)$ consists of a commutative symmetric ring spectrum $A$, a commutative ${\cal J}$-space monoid $M$ and a morphism of commutative ${\cal J}$-space monoids $\alpha \colon M \to \Omega^{\cal J}(A)$. It is a \emph{logarithmic ring spectrum} if the map $\alpha^{-1}{\rm GL}_1^{\cal J}(A) \to {\rm GL}_1^{\cal J}(A)$ in the (homotopy) pullback square \begin{equation}\label{logificationpullback}\begin{tikzcd}[row sep = small]\alpha^{-1}{\rm GL}_1^{\cal J}(A) \ar{r} \ar{d} & {\rm GL}_1^{\cal J}(A) \ar{d} \\ M \ar{r}{\alpha} & \Omega^{\cal J}(A)\end{tikzcd}\end{equation} is a ${\cal J}$-equivalence. A morphism $(f, f^{\flat}) \colon (R, P) \to (A, M)$ of pre-log ring spectra consists of a map of commutative symmetric ring spectra $f \colon R \to A$ and a map of commutative ${\cal J}$-space monoids $f^{\flat} \colon P \to M$ such that $\Omega^{\cal J}(f) \circ \beta = \alpha \circ f^{\flat}$.
\end{definition}

Using basic model category techniques, we find that the category ${\cal P} = {\rm PreLog}$ of pre-logarithmic ring spectra admits a \emph{projective} model structure in which a map $(f, f^{\flat})$ is a weak equivalence or fibration if and only if $f$ and $f^{\flat}$ is a weak equivalence or fibration. We choose this model structure for consistency with \cite{RSS15, RSS18}; as we explain in Remark \ref{logthhcircle}, the nature of our constructions make our arguments go through for the injective model structure as well. This may be advantageous if one wants to employ spectral variants of the \emph{log} model structure constructed in \cite[Section 3]{SSV16}, but we have not made use of this material here. 

\begin{example}\label{ex:prelogstr} We provide a series of natural examples of pre-log ring spectra and maps relating them:

\begin{enumerate}

\item If $A$ is a commutative symmetric ring spectrum, the inclusion ${\rm GL}_1^{\cal J}(A) \subset \Omega^{\cal J}(A)$ gives rise to the \emph{trivial log structure} on $A$, and $(A, {\rm GL}_1^{\cal J}(A))$ is the \emph{trivial log ring spectrum}. 

\item If $(A, M)$ is a pre-log ring spectrum, there is a map \[(A, M) \to (A \wedge_{{\Bbb S}^{\cal J}[M]} {\Bbb S}^{\cal J}[M^{\rm gp}], {\rm GL}_1^{\cal J}(A \wedge_{{\Bbb S}^{\cal J}[M]} {\Bbb S}^{\cal J}[M^{\rm gp}]))\] from $(A, M)$ to its \emph{localization} $A[M^{-1}] := A \wedge_{{\Bbb S}^{\cal J}[M]} {\Bbb S}^{\cal J}[M^{\rm gp}]$ equipped with its trivial log structure. 

\item Let $A$ be a discrete valuation ring and let $\pi \in A$ be a uniformizer. Denote by $\langle x \rangle$ the free commutative monoid on one generator $x$, and define a map $\langle x \rangle \to (A, \cdot)$ by sending $x$ to $\pi$. Then $(A, \langle x \rangle)$ is a pre-log ring, and we obtain a pre-log ring spectrum by $(HA, F_{(\bf 0, 0)}^{\cal J}\langle x \rangle)$, where $H(-)$ denotes the Eilenberg--Mac\,Lane spectrum and $F_{(\bf 0, 0)}^{\cal J}(-)$ is left adjoint to the evaluation functor sending a commutative ${\cal J}$-space monoid $M$ to $M({\bf 0, 0})$. 

\item Let $A$ be a commutative symmetric ring spectrum and let $x \in \pi_{n_2 - n_1}(A)$ be a homotopy class, represented by a map $x \colon S^{n_2} \to A_{n_1}$. Then $x$ can be regarded as a point  \[x \colon * \to \Omega^{\cal J}(A)({\bf n_1, n_2}) := \Omega^{n_2}(A_{n_1}).\] The point $x$ is adjoint to a morphism ${\Bbb C}(x) \to \Omega^{\cal J}(A)$ of commutative ${\cal J}$-space monoids, where ${\Bbb C}(x)$ denotes the free commutative ${\cal J}$-space monoid on a point in degree $({\bf n_1, n_2})$. Its localization is the commutative symmetric ring spectrum $A[1/x]$ by \cite[Proposition 3.19]{Sag14}. 

\item Building on the previous example, we define the \emph{direct image pre-log structure} $D(x)$ associated with $x$ as follows: form a homotopy pullback square \[\begin{tikzcd}[row sep = small]D'(x) \ar{r} \ar{d} & {\Omega}^{\cal J}(A) \ar{d} \\ {\Bbb C}(x)^{\rm gp} \ar{r} & \Omega^{\cal J}(A[1/x])\end{tikzcd}\] and define $D(x)$ via a cofibrant replacement of $D'(x)$ relative to ${\Bbb C}(x)$; see \cite[Construction 4.2]{Sag14} for the precise construction. The localization of this pre-log ring spectrum is also $A[1/x]$ by \cite[Theorem 4.4]{Sag14}. This construction is independent of the choice of the representative $x$ and will be our preferred choice of pre-log structure associated to a homotopy class. 
\end{enumerate}
\end{example}

\subsection{The logification construction} We now recall a functorial procedure for passing from a pre-log ring spectrum $(A, M)$ to a log ring spectrum $(A, M^a)$:

\begin{construction} Let $(A, M) = (A, M, \alpha)$ be a pre-log ring spectrum. Form a factorization \[\begin{tikzcd}\alpha^{-1}{\rm GL}_1^{\cal J}(A) \ar[tail]{r} & G \ar[two heads]{r}{\simeq} & {\rm GL}_1^{\cal J}(A)\end{tikzcd}\] in the positive ${\cal J}$-model structure of the natural map $\alpha^{-1}{\rm GL}_1^{\cal J}(A) \to {\rm GL}_1^{\cal J}(A)$ (see (\ref{logificationpullback})), and consider the (homotopy) pushout \[\begin{tikzcd}[row sep = small]\alpha^{-1}{\rm GL}_1^{\cal J}(A) \ar{r} \ar{d} & G \ar{d} \\ M \ar{r} & M^a\end{tikzcd}\] of commutative ${\cal J}$-space monoids. The maps \[G \to {\rm GL}_1^{\cal J}(A) \to \Omega^{\cal J}(A) \text{ and } M \xrightarrow{\alpha} \Omega^{\cal J}(A)\] give rise to a map $\alpha^a \colon M^a \to \Omega^{\cal J}(A)$, and $(A, M^a, \alpha^a)$ is called the \emph{logification} of $(A, M, \alpha)$. By \cite[Lemma 3.12]{Sag14}, this is indeed a log structure.
\end{construction}

We discuss the effect of the logification construction for some of the pre-log ring spectra discussed in Example \ref{ex:prelogstr}. 

\begin{example}\label{ex:logification1} Consider the discrete pre-log ring $(A, \langle x \rangle)$ from Example \ref{ex:prelogstr}(3). Its logification (in discrete pre-log rings) is the log ring $(A, A \cap {\rm GL}_1(K))$, where $K$ denotes the fraction field of the discrete valuation ring $A$ and $A \cap {\rm GL}_1(K) = \langle \pi \rangle \times {\rm GL}_1(A)$ consists of all non-zero elements of $A$. As there is a pullback square \[\begin{tikzcd}[row sep = small]A \cap {\rm GL}_1(K) \ar{r} \ar{d} &  (A, \cdot) \ar{d} \\ {\rm GL}_1(K) \ar{r} & (K, \cdot),\end{tikzcd}\] this is an instance of a \emph{direct image log structure} on $A$ induced by the trivial log structure on $K$ and the localization map $A \to K$. This example is a special case of \cite[Example (2.5)]{Kat89}.
\end{example}

\begin{example}\label{ex:logification2} For the next examples, we fix the following setup: the commutative symmetric ring spectrum $A$ is connective and the map $x \colon S^{n_2} \to A_{n_1}$ represents a nontrivial homotopy class in  $\pi_{n_2 - n_1}(A)$ of even positive degree, and the localization map $A \to A[1/x]$ is a model for the connective cover map of $A[1/x]$. Examples of this kind include the real and complex connective $K$-theory spectra ${\rm ko}$ and ${\rm ku}$ and the Adams summand ${\ell}$.
\begin{enumerate}
\item Consider the pre-log ring spectrum $(A, {\Bbb C}(x))$ discussed in Example~\ref{ex:prelogstr}(4). By \cite[Lemma 4.9]{Sag14}, the associated log structure is weakly equivalent to $(A, {\Bbb C}(x) \boxtimes {\rm GL}_1^{\cal J}(A))$. 
\item Consider the pre-log ring spectrum $(A, D(x))$ discussed in Example~\ref{ex:prelogstr}(5). By \cite[Lemma 4.7]{Sag14}, the associated log structure is weakly equivalent to $(A, j_*{\rm GL}_1^{\cal J}(A[1/x]))$, the direct image log structure associated to the connective cover map $j \colon A \to A[1/x]$ and the trivial log structure on $A[1/x]$. 
\end{enumerate}
\end{example}

The two log ring spectra above are not weakly equivalent, and this displays an interesting distinction which is not visible for discrete pre-log rings, as Example \ref{ex:logification1} illustrates in the case of discrete valuation rings. We refer to \cite[Remark 4.8]{Sag14} for further comments in this direction.  

\subsection{Mapping spaces of pre-log ring spectra} Let $(A, M)$ and $(B, N)$ be pre-logarithmic ring spectra.

\begin{definition}\label{logmappingspace} The \emph{space of maps} ${\rm Map}_{\cal P}((A, M), (B, N))$ is the pullback of \[{\rm Map}_{{\cal C}{\cal S}^{\cal J}}(M, N) \xrightarrow{} {\rm Map}_{{\cal C}{\rm Sp}^{\Sigma}}({\Bbb S}^{\cal J}[M], B) \xleftarrow{} {\rm Map}_{{\cal C}{\rm Sp}^{\Sigma}}(A, B),\] where the morphisms are induced by the structure maps. This captures a well-defined homotopy type as soon as $(A, M)$ is cofibrant and $(B, N)$ is fibrant, as the structure map ${\Bbb S}^{\cal J}[M] \to A$ is a cofibration in this case. 
\end{definition}

We will often use the following description of mapping spaces in the over/under-category ${\cal P}_{(R, P)//(C, K)}$: the mapping space ${\rm Map}_{{\cal P}_{(R, P) // (C, K)}}((A, M), (B, N))$ arises as the pullback of \begin{equation}\label{relativemappingspace} {\rm Map}_{{\cal C}{\cal S}^{\cal J}_{P // K}}(M, N) \xrightarrow{} {\rm Map}_{{\cal C}{\rm Sp}^{\Sigma}_{{\Bbb S}^{\cal J}[P] // C}}({\Bbb S}^{\cal J}[M], B) \xleftarrow{} {\rm Map}_{{\cal C}{\rm Sp}^{\Sigma}_{{\Bbb S}^{\cal J}[P] // C}}(A, B),\end{equation} where the morphisms are induced by the structure maps.

\section{Logarithmic topological Hochschild homology}\label{logthh} We now introduce a variant of logarithmic topological Hochschild homology relative to a map $(R, P) \to (A, M)$ of pre-logarithmic ring spectra. On one hand, we prove that it enjoys properties analogous to those of the relative topological Hochschild homology ${\rm THH}^R(A)$. On the other we prove that it enjoys properties analogous to those of the absolute construction ${\rm THH}(A, M)$ of \cite{RSS15}. 

\begin{definition}\label{def:logthh} Let $(R, P) \to (A, M)$ be a cofibration of cofibrant pre-logarithmic ring spectra. The \emph{logarithmic topological Hochschild homology} ${\rm THH}^{(R, P)}(A, M)$ is the commutative symmetric ring spectrum given by the pushout \[\begin{tikzcd}[row sep = small]{\Bbb S}^{\cal J}[B^{\rm cy}_P(M)] \ar{r} \ar{d} & {\Bbb S}^{\cal J}[B^{\rm cy}_P(M)^{\rm rep}] \ar{d} \\ {\rm THH}^R(A) \ar{r} & {\rm THH}^{(R, P)}(A, M)\end{tikzcd}\] of commutative symmetric ring spectra.
\end{definition}

We remark that ${\rm THH}^{(R, P)}(A, M)$ is naturally an object of the category ${\cal C}{\rm Sp}^{\Sigma}_{A // A}$ of augmented commutative $A$-algebras. 

\begin{remark}[The circle action on logarithmic ${\rm THH}$]\label{logthhcircle} In the above definition we have chosen to model the replete bar construction by a relative fibrant replacement in the group completion model structure. Equivalently, one could have defined the ``relative" replete bar construction by a (homotopy) cartesian square \[\begin{tikzcd}[row sep = small]B^{\rm rep}_P(M) \ar{d} \ar{r} & B^{\rm cy}_{P^{\rm gp}}(M^{\rm gp}) \ar{d} \\ M' \ar[two heads]{r} & M^{\rm gp},\end{tikzcd}\] where $M \xrightarrow{\simeq} M' \xrightarrow{} M^{\rm gp}$ is factorization in the positive ${\cal J}$-model structure on commutative ${\cal J}$-space monoids. By Lemma \ref{chainovergp} there is a chain of equivalences $B^{\rm cy}_{P^{\rm gp}}(M^{\rm gp}) \simeq (B^{\rm cy}_{P}(M))^{\rm gp}$, and as such it follows from virtual surjectivity of the augmentation to $M^{\rm gp}$ that $B^{\rm rep}_P(M)$ and $B^{\rm cy}_P(M)^{\rm rep}$ are ${\cal J}$-equivalent, see Lemma \ref{repletepullback}. One advantage that the replete bar construction $B^{\rm rep}_P(M)$ enjoys over the repletion $B^{\rm cy}_P(M)^{\rm rep}$ is that it inherits a cyclic action from the cyclic bar construction $B^{\rm cy}_{P^{\rm gp}}(M^{\rm gp})$.  

While we are not free to exchange $B^{\rm cy}_P(M)^{\rm rep}$ with $B^{\rm rep}_P(M)$ in our definition of ${\rm THH}^{(R, P)}(A, M)$ since the map $B^{\rm cy}_P(M) \to B^{\rm rep}_P(M)$ may fail to be a cofibration, it is easy to see that our definition is weakly equivalent to that of \cite{RSS15} in the case of absolute case of ${\rm THH}(A, M)$. \end{remark}

\subsection{The relation between absolute and relative log ${\rm THH}$} We now provide an analogue of the isomorphism $P \boxtimes_{B^{\rm cy}(P)} B^{\rm cy}(M) \cong B^{\rm cy}_P(M)$ from Proposition \ref{prop:cycbar} for logarithmic ${\rm THH}$. To ensure that the relevant balanced smash product captures a well-defined homotopy type, we form a cofibrant replacement \begin{equation}\label{projcof} B^{\rm cy}(A)^{\rm c} \xleftarrow{} {\Bbb S}^{\cal J}[B^{\rm cy}(M)]^{\rm c} \xrightarrow{} {\Bbb S}^{\cal J}[B^{\rm cy}(M)^{\rm rep}]^{\rm c}\end{equation} of the $(* \xleftarrow{} * \xrightarrow{} *)$-shaped diagram defining ${\rm THH}(A, M)$ relative to that defining ${\rm THH}(R, P)$ in the projective model structure (see \cite[Proposition 10.6]{DS95}). We shall denote by ${\rm THH}(A, M)^{\rm c}$ the pushout of the diagram (\ref{projcof}). There is a commutative diagram  \begin{equation}\label{projcof33}\begin{tikzcd}[row sep = small]R & \ar{l} B^{\rm cy}(R) \ar{r} & B^{\rm cy}(A)^{\rm c} \\ {\Bbb S}^{\cal J}[P] \ar{u} \ar{d}{=} &  {\Bbb S}^{\cal J}[B^{\rm cy}(P)] \ar{l} \ar{r} \ar{u} \ar{d} & {\Bbb S}^{\cal J}[B^{\rm cy}(M)]^{\rm c} \ar{u} \ar{d} \\ {\Bbb S}^{\cal J}[P] &  {\Bbb S}^{\cal J}[B^{\rm cy}(P)^{\rm rep}] \ar{l} \ar{r} & {\Bbb S}^{\cal J}[B^{\rm cy}(M)^{\rm rep}]^{\rm c}\end{tikzcd}\end{equation} of commutative symmetric ring spectra. We shall denote by ${\rm THH}^{(R, P)}(A, M)^{\rm c}$ the commutative symmetric ring spectrum obtained as the colimit of the diagram (\ref{projcof33}) by  first forming the horizontal pushouts. 

\begin{lemma}\label{logthhjuggling} There is a chain of stable equivalences \[R \wedge_{{\rm THH}(R, P)} {\rm THH}(A, M)^{\rm c} \xrightarrow{\simeq} {\rm THH}^{(R, P)}(A, M)^{\rm c} \xleftarrow{\simeq} {\rm THH}^{(R, P)}(A, M)\] of commutative symmetric ring spectra. 
\end{lemma}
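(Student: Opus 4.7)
The plan is to view the middle object ${\rm THH}^{(R, P)}(A, M)^{\rm c}$ as the iterated colimit of the $3 \times 3$ diagram~(\ref{projcof33}) computed by first taking horizontal pushouts, and to identify $R \wedge_{{\rm THH}(R, P)} {\rm THH}(A, M)^{\rm c}$ as the iterated colimit of the same diagram computed by first taking vertical pushouts. Once both identifications are in place, the two orderings compute the same homotopy colimit of~(\ref{projcof33}) and are therefore canonically weakly equivalent, provided the diagram is sufficiently cofibrant.

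For the rightmost weak equivalence I would compute the horizontal pushouts of (\ref{projcof33}) row by row. The pushout of Row~1 is a cofibrant model of $R \wedge_{{\rm THH}(R)} {\rm THH}(A) \simeq {\rm THH}^R(A)$ by Proposition~\ref{thhsuspension}. Since ${\Bbb S}^{\cal J}[-]$ is strong symmetric monoidal and left Quillen, the pushout of Row~2 is weakly equivalent to ${\Bbb S}^{\cal J}[P \boxtimes_{B^{\rm cy}(P)} B^{\rm cy}(M)]$, which is ${\Bbb S}^{\cal J}[B^{\rm cy}_P(M)]$ by Proposition~\ref{prop:cycbar}. Similarly, the pushout of Row~3 is weakly equivalent to ${\Bbb S}^{\cal J}[B^{\rm cy}_P(M)^{\rm rep}]$ by the same reasoning together with Lemma~\ref{repbarjuggling}. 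The resulting span is therefore objectwise weakly equivalent to the span whose vertical pushout defines ${\rm THH}^{(R, P)}(A, M)$ in Definition~\ref{def:logthh}, and by left properness of the positive stable model structure on ${\cal C}{\rm Sp}^{\Sigma}$ this identifies ${\rm THH}^{(R, P)}(A, M)^{\rm c}$ with ${\rm THH}^{(R, P)}(A, M)$.

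For the leftmost weak equivalence I would instead compute vertical pushouts first. Since one leg of Column~1 is the identity on ${\Bbb S}^{\cal J}[P]$, its pushout collapses to $R$. Column~2 is precisely the span whose pushout is the absolute logarithmic ${\rm THH}(R, P)$, and Column~3 is the cofibrant span whose pushout is ${\rm THH}(A, M)^{\rm c}$ by the definition introduced just after~(\ref{projcof}). The remaining horizontal pushout of the resulting span $R \leftarrow {\rm THH}(R, P) \to {\rm THH}(A, M)^{\rm c}$ is therefore $R \wedge_{{\rm THH}(R, P)} {\rm THH}(A, M)^{\rm c}$, and the canonical comparison map to ${\rm THH}^{(R, P)}(A, M)^{\rm c}$ is a weak equivalence because both orderings compute the same iterated homotopy pushout.

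The main obstacle is the book--keeping that guarantees both iterations truly model the homotopy colimit of~(\ref{projcof33}). The projective cofibrant replacement~(\ref{projcof}) was arranged so that each horizontal span is projectively cofibrant, so that after taking horizontal pushouts the induced vertical maps remain cofibrations of commutative symmetric ring spectra, and so that the symmetric counterpart holds in the column direction; combined with left properness, this is what ensures that each individual pushout is a homotopy pushout and that the two iterated pushouts can be compared to each other. No new input beyond Proposition~\ref{prop:cycbar}, Proposition~\ref{thhsuspension}, Lemma~\ref{repbarjuggling}, and strong monoidality of ${\Bbb S}^{\cal J}[-]$ is required.
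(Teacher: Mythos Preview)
Your proposal is correct and follows essentially the same approach as the paper: the paper's proof simply reads ``This follows from commuting homotopy pushouts in the diagram~(\ref{projcof33}), Proposition~\ref{prop:cycbar} and Lemma~\ref{repbarjuggling},'' and you have spelled out exactly this argument (with the additional bookkeeping on cofibrancy and the extra citation of Proposition~\ref{thhsuspension}, which is just the spectral specialization of Proposition~\ref{prop:cycbar}).
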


\begin{proof}  This follows from commuting homotopy pushouts in the diagram (\ref{projcof33}), Proposition \ref{prop:cycbar} and Lemma \ref{repbarjuggling}. 
\end{proof}

\subsection{Logification invariance of relative log ${\rm THH}$} We now discuss the effect of logification on log ${\rm THH}$:

\begin{proposition}\label{loginvariance} The logification construction induces stable equivalences \[{\rm THH}^{(R, P)}(A, M) \xrightarrow{\simeq} {\rm THH}^{(R, P)}(A, M^a) \xrightarrow{\simeq} {\rm THH}^{(R, P^a)}(A, M^a)\] of commutative symmetric ring spectra. 
\end{proposition}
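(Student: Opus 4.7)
My plan is to reduce each of the two stable equivalences to the corresponding absolute statement from \cite{RSS15}, using Lemma \ref{logthhjuggling} as the bridge between the relative and absolute settings.

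First, I would handle the stable equivalence ${\rm THH}^{(R,P)}(A, M) \to {\rm THH}^{(R,P)}(A, M^a)$. By Lemma \ref{logthhjuggling}, both sides are naturally stably equivalent to $R \wedge_{{\rm THH}(R,P)} {\rm THH}(A, M)^{\rm c}$ and $R \wedge_{{\rm THH}(R,P)} {\rm THH}(A, M^a)^{\rm c}$ respectively, and these model the corresponding derived smash products. The absolute logification invariance for log $\rm THH$ proven in \cite{RSS15} supplies a stable equivalence ${\rm THH}(A, M) \to {\rm THH}(A, M^a)$. Since this can be realized between cofibrant objects in the category of augmented commutative ${\rm THH}(R, P)$-algebras (arranging, as in diagram (\ref{projcof33}), cofibrant replacements of the relevant ${\cal J}$-space monoids and applying ${\Bbb S}^{\cal J}[-]$), smashing with $R$ over ${\rm THH}(R,P)$ preserves this equivalence.

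For the second stable equivalence ${\rm THH}^{(R,P)}(A, M^a) \to {\rm THH}^{(R,P^a)}(A, M^a)$, I would again apply Lemma \ref{logthhjuggling} to identify both sides with $R \wedge_{{\rm THH}(R,P)} {\rm THH}(A, M^a)^{\rm c}$ and $R \wedge_{{\rm THH}(R,P^a)} {\rm THH}(A, M^a)^{\rm c}$. The absolute logification invariance now gives a stable equivalence ${\rm THH}(R, P) \to {\rm THH}(R, P^a)$, producing a map of homotopy pushout diagrams in which two of the three legs (namely $R = R$ and ${\rm THH}(A, M^a)^{\rm c} = {\rm THH}(A, M^a)^{\rm c}$) are identities while the third is a stable equivalence. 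Homotopy invariance of pushouts along cofibrations then yields the desired stable equivalence.

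The main technical obstacle will be maintaining the cofibrancy conditions throughout: the juggling lemma is stated for specific cofibrant replacements, and the comparison of derived smash products depends on having appropriately cofibrant maps at each level of the diagram (\ref{projcof33}). Tracking the naturality of Lemma \ref{logthhjuggling} and of the absolute logification invariance along the maps $(A, M) \to (A, M^a)$ and $(R, P) \to (R, P^a)$ will also be necessary to ensure the zig-zag of equivalences actually represents the morphism induced by the logification, rather than some abstractly equivalent map.
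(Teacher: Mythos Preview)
Your proposal is correct and follows essentially the same approach as the paper: reduce to the absolute logification invariance of \cite[Theorem 4.24]{RSS15} via Lemma \ref{logthhjuggling}, and deduce the relative statement by homotopy invariance of the smash products involved. The paper packages your two steps into a single $3\times 3$ commutative diagram whose horizontal arrows are the equivalences of Lemma \ref{logthhjuggling} and whose left-column vertical arrows are equivalences by the absolute case, then invokes two-out-of-three; your anticipated technical obstacles (cofibrancy and naturality of the zig-zag) are exactly what is implicit in asserting that this diagram commutes and computes the correct homotopy types.
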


In the absolute setting we remark that, unlike in the context of \cite[Section 4.3]{RSS15}, it is not necessary to form an additional cofibrant replacement of $A$ when passing from ${\rm THH}(A, M)$ to ${\rm THH}(A, M^a)$, as the pushout of \[{\rm THH}(A) \xleftarrow{} {\Bbb S}^{\cal J}[B^{\rm cy}(M)] \xrightarrow{} {\Bbb S}^{\cal J}[B^{\rm cy}(M)^{\rm rep}]\] defining ${\rm THH}(A, M)$ captures a well-defined homotopy type without further assumptions on $A$.

\begin{proof} In the case of absolute log ${\rm THH}$, this is \cite[Theorem 4.24]{RSS15}. We reduce from the relative to the absolute case using Lemma \ref{logthhjuggling}: there is a commutative diagram \[\begin{tikzcd}[row sep = small]R \wedge_{{\rm THH}(R, P)} {\rm THH}(A, M)^{\rm c} \ar{r}{\simeq} \ar{d}{\simeq} & {\rm THH}^{(R, P)}(A, M)^{\rm c} \ar{d} & {\rm THH}^{(R, P)}(A, M)  \ar[swap]{l}{\simeq} \ar{d} \\ R \wedge_{{\rm THH}(R, P)} {\rm THH}(A, M^a)^{\rm c} \ar{d}{\simeq} \ar{r}{\simeq} & {\rm THH}^{(R, P)}(A, M^a)^{\rm c} \ar{d} & {\rm THH}^{(R, P)}(A, M^a) \ar{d} \ar[swap]{l}{\simeq} \\ R \wedge_{{\rm THH}(R, P^a)} {\rm THH}(A, M^a)^{\rm c} \ar{r}{\simeq} & {\rm THH}^{(R, P^a)}(A, M^a)^{\rm c} & {\rm THH}^{(R, P)}(A, M^a) \ar[swap]{l}{\simeq}\end{tikzcd}\] of commutative symmetric ring spectra, in which the indicated morphisms are stable equivalences. We obtain the desired statement by the two-out-of-three property. 
\end{proof}

\subsection{Logarithmic ${\rm THH}$ as a suspension}

Proposition \ref{prop:repbarsuspension} describes the replete bar construction $B^{\rm cy}_P(M)^{\rm rep}$ as a suspension in the category of augmented commutative ${\cal J}$-space monoids, while Proposition \ref{thhsuspension} describes the ordinary topological Hochschild homology ${\rm THH}^R(A)$ as a suspension in the category of augmented commutative $A$-algebras. Gluing these facts together, we can prove the analogous statement for logarithmic topological Hochschild homology:

\begin{proposition}\label{logthhsuspension} Let $(R, P) \to (A, M)$ be a cofibration of cofibrant pre-logarithmic ring spectra. There is a chain of stable equivalences of augmented commutative $A$-algebras relating ${\rm THH}^{(R, P)}(A, M)$ and \begin{equation}\label{suspensioninlogthh}S^1 \odot_A ((A \wedge_R A) \wedge_{{\Bbb S}^{\cal J}[M \boxtimes_P M]} {\Bbb S}^{\cal J}[(M \boxtimes_P M)^{\rm rep}]),\end{equation} the suspension of $(A \wedge_R A) \wedge_{{\Bbb S}^{\cal J}[M \boxtimes_P M]} {\Bbb S}^{\cal J}[(M \boxtimes_P M)^{\rm rep}]$ in the category of augmented commutative $A$-algebras.
\end{proposition}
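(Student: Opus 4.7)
The plan is to glue Propositions \ref{thhsuspension} and \ref{prop:repbarsuspension} together by pushing the second one across the functors ${\Bbb S}^{\cal J}[-]$ and $A \wedge_{{\Bbb S}^{\cal J}[M]} -$, and then commuting the resulting $S^1 \odot_A (-)$ outside the pushout defining log ${\rm THH}$.

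First, I would unravel Definition \ref{def:logthh}: ${\rm THH}^{(R, P)}(A, M)$ is the pushout of the diagram ${\rm THH}^R(A) \leftarrow {\Bbb S}^{\cal J}[B^{\rm cy}_P(M)] \to {\Bbb S}^{\cal J}[B^{\rm cy}_P(M)^{\rm rep}]$ of commutative symmetric ring spectra. Proposition \ref{thhsuspension} rewrites the left corner as $S^1 \odot_A (A \wedge_R A)$ in ${\cal C}{\rm Sp}^{\Sigma}_{A // A}$. For the middle and right corners, I apply ${\Bbb S}^{\cal J}[-]$ to the isomorphism $S^1 \odot_M (M \boxtimes_P M) \cong B^{\rm cy}_P(M)$ of Proposition \ref{prop:cycbar} and to the chain $S^1 \odot_M (M \boxtimes_P M)^{\rm rep} \simeq B^{\rm cy}_P(M)^{\rm rep}$ of Proposition \ref{prop:repbarsuspension}. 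Because ${\Bbb S}^{\cal J}[-]$ is a strong symmetric monoidal left Quillen functor it preserves pushouts and the pointed simplicial tensor, so these identifications become equivalences of augmented commutative ${\Bbb S}^{\cal J}[M]$-algebras.

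Next, base-changing along the structure map ${\Bbb S}^{\cal J}[M] \to A$ by the left adjoint $A \wedge_{{\Bbb S}^{\cal J}[M]} (-)$ transports these equivalences into ${\cal C}{\rm Sp}^{\Sigma}_{A // A}$, again commuting with $S^1 \odot (-)$. In that category all three corners of the pushout presentation of ${\rm THH}^{(R, P)}(A, M)$ acquire the form $S^1 \odot_A (-)$. Since $S^1 \odot_A (-)$ is a left adjoint (to the loop functor of the pointed category ${\cal C}{\rm Sp}^{\Sigma}_{A // A}$), it commutes with pushouts, so the whole construction is equivalent to $S^1 \odot_A$ applied to the pushout
\[
(A \wedge_R A) \wedge_{{\Bbb S}^{\cal J}[M \boxtimes_P M]} {\Bbb S}^{\cal J}[(M \boxtimes_P M)^{\rm rep}],
\]
which is precisely the expression (\ref{suspensioninlogthh}).

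The main obstacle is homotopical bookkeeping: the equivalences produced by Propositions \ref{thhsuspension} and \ref{prop:repbarsuspension} live in different augmented-algebra categories (over $A$ and over $M$ respectively), and to commute the final pushout past $S^1 \odot_A (-)$ one needs the three corners to be cofibrant enough that the point-set pushout represents the homotopy pushout. This is handled exactly as in the proof of Lemma \ref{logthhjuggling}: the cofibration hypothesis on $(R, P) \to (A, M)$ and a projective cofibrant replacement of the middle diagram (\ref{projcof}) ensure that ${\Bbb S}^{\cal J}[-]$ and $A \wedge_{{\Bbb S}^{\cal J}[M]} (-)$ send the relevant inputs to cofibrant objects, and that the derived and underived pushouts agree at each stage. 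Once this is in place the chain of equivalences is obtained by stringing together the chain above with the chain of Lemma \ref{logthhjuggling}, yielding the desired zig-zag in ${\cal C}{\rm Sp}^{\Sigma}_{A // A}$.
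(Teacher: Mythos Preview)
Your proposal is correct and follows essentially the same strategy as the paper. Both arguments amount to computing the same iterated homotopy colimit in two orders: the paper organizes this via an explicit $3 \times 3$ diagram of commutative ring spectra whose total colimit is evaluated first row-wise (yielding the pushout of suspensions $(S^1 \odot_A (A \wedge_R A)) \wedge_{S^1 \odot_{{\Bbb S}^{\cal J}[M]} {\Bbb S}^{\cal J}[M \boxtimes_P M]} (S^1 \odot_{{\Bbb S}^{\cal J}[M]} {\Bbb S}^{\cal J}[(M \boxtimes_P M)^{\rm rep}])$) and then column-wise (yielding the suspension of the pushout), while you phrase the same computation as a base-change along ${\Bbb S}^{\cal J}[M] \to A$ followed by commuting $S^1 \odot_A(-)$ past a pushout in ${\cal C}{\rm Sp}^{\Sigma}_{A//A}$. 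The paper's $3 \times 3$ diagram has the minor advantage of making the compatibility of the various augmentations and base objects visible at once, so one never has to justify separately that the original pushout agrees with the pushout of the base-changed diagram; your formulation requires (and you correctly note) that the map ${\Bbb S}^{\cal J}[B^{\rm cy}_P(M)] \to {\rm THH}^R(A)$ factors through $A \wedge_{{\Bbb S}^{\cal J}[M]} {\Bbb S}^{\cal J}[B^{\rm cy}_P(M)]$, which is immediate from the unit structure. One small point: your final sentence about ``stringing together \ldots\ with the chain of Lemma~\ref{logthhjuggling}'' is not needed and slightly misleading---Lemma~\ref{logthhjuggling} concerns the relation between absolute and relative log ${\rm THH}$, which plays no role here. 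The cofibrancy bookkeeping you allude to is handled in the paper simply by the standing hypothesis that $(R,P) \to (A,M)$ is a cofibration of cofibrant objects, without an additional projective replacement.
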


\begin{proof} Keeping Definition \ref{tensors} in mind, we consider the commutative diagram \[\begin{tikzcd}[row sep = small]A & A \wedge_R A \ar{r} \ar{l} & S^1 \otimes_A (A \wedge_R A) \\ {\Bbb S}^{\cal J}[M] \ar{u} \ar{d}{=} & {\Bbb S}^{\cal J}[M \boxtimes_P M] \ar{u} \ar{d} \ar{l} \ar{r} & S^1 \otimes_{{\Bbb S}^{\cal J}[M]} {\Bbb S}^{\cal J}[M \boxtimes_P M] \ar{u} \ar{d} \\ {\Bbb S}^{\cal J}[M] & {\Bbb S}^{\cal J}[(M \boxtimes_P M)^{\rm rep}] \ar{l} \ar{r} & S^1 \otimes_{{\Bbb S}^{\cal J}[M]} {\Bbb S}^{\cal J}[(M \boxtimes_P M)^{\rm rep}]\end{tikzcd}\] of commutative symmetric ring spectra. Commuting homotopy pushouts reveals that the suspension (\ref{suspensioninlogthh}) is naturally stably equivalent to \begin{equation}\label{coprodofsuspensions}(S^1 \odot_A (A \wedge_R A)) \wedge_{S^1 \odot_{{\Bbb S}^{\cal J}[M]} {\Bbb S}^{\cal J}[M \boxtimes_P M]} (S^1 \odot_{{\Bbb S}^{\cal J}[M]} {\Bbb S}^{\cal J}[(M \boxtimes_P M)^{\rm rep}]).\end{equation} There are isomorphisms $S^1 \odot_A (A \wedge_R A) \cong B^{\rm cy}_R(A)$ and $S^1 \odot_{{\Bbb S}^{\cal J}[M]} {\Bbb S}^{\cal J}[M \boxtimes_P M] \cong B^{\rm cy}_{{\Bbb S}^{\cal J}[P]}({\Bbb S}^{\cal J}[M])$ by Proposition \ref{prop:cycbar}. Moreover, there is a chain of stable equivalences \[S^1 \odot_{{\Bbb S}^{\cal J}[M]} {\Bbb S}^{\cal J}[(M \boxtimes_P M)^{\rm rep}] \simeq {\Bbb S}^{\cal J}[S^1 \odot_M (M \boxtimes_P M)^{\rm rep}] \simeq {\Bbb S}^{\cal J}[B^{\rm cy}_P(M)^{\rm rep}],\] where the second chain of stable equivalences arises from applying ${\Bbb S}^{\cal J}[-]$ to the chain of ${\cal J}$-equivalences of Proposition \ref{prop:repbarsuspension}. The maps remain equivalences after applying ${\Bbb S}^{\cal J}[-]$, as all maps in the chain are augmented over the cofibrant commutative ${\cal J}$-space monoid $M$, so that \cite[Corollary 8.8]{RSS15} is applicable. Moreover, since the maps in the chain are under $B^{\rm cy}_P(M)$, they induce a chain of equivalences relating (\ref{coprodofsuspensions}) to $B^{\rm cy}_R(A) \wedge_{{\Bbb S}^{\cal J}[B^{\rm cy}_P(M)]} {\Bbb S}^{\cal J}[B^{\rm cy}_P(M)^{\rm rep}] = {\rm THH}^{(R, P)}(A, M)$, which concludes the proof. 
\end{proof}

\section{Logarithmic topological Andr\'e--Quillen homology}\label{logtaq} We now proceed to review the notion of logarithmic derivations following \cite{Sag14}, before we introduce our new definition of log ${\rm TAQ}$. This section contains many constructions involving mapping spaces in comma categories ${\cal C}_{x // y}$. Our preferred notation for these mapping spaces is ${\rm Map}{}_{{\cal C}_{x // y}}(-, -)$. When the category ${\cal C}$ is clear from context, we will on occasion shorten this to ${\rm Map}{}^{x/}_{/y}(-, -)$. 

\subsection{Derivations and ${\rm TAQ}$}

Let $A$ be a positive fibrant commutative symmetric ring spectrum and let $X$ be an $A$-module. Then we can form the \emph{square-zero} extension $A \vee X$: this has a multiplication coming from that on $A$, the $A$-module structure on $X$ and the trivial map $X \wedge X \to *$. This construction comes with a natural augmentation $A \vee X \to A$. To ensure that various mapping spaces capture the correct homotopy type, we fibrantly replace $A \vee X$ over $A$. We borrow the following notation from \cite[Definition 5.2]{Sag14}:

\begin{definition}\label{squarezero} We let $A \vee_f X$ denote a fibrant replacement over $A$ in the positive model structure on commutative symmetric ring spectra: \[\begin{tikzcd}A \vee X \ar[tail]{r}{\simeq} & A \vee_f X \ar[two heads]{r} & A.\end{tikzcd}\]
\end{definition}

Suppose now that $R \to A$ is a cofibration of commutative symmetric ring spectra. Then the \emph{space of $R$-algebra derivations from $A$ to $X$} is the mapping space \[{\rm Der}_R(A, X) := {\rm Map}_{{\cal C}{\rm Sp}^{\Sigma}_{R // A}}(A, A \vee_f X).\] In analogy with the situation in ordinary algebra, where derivations are corepresented by the module of K\"ahler differentials, the space ${\rm Der}_R(A, X)$ is corepresented by the $A$-module ${\rm TAQ}^R(A)$, the topological Andr\'e--Quillen homology of $A$, whose definition we now briefly recall. All statements made here are well-known and were first proven in \cite{Bas99}. 

The (already derived, by our cofibrancy hypothesis) smash product $A \wedge_R A$ is an augmented commutative $A$-algebra, with augmentation map the multiplication $A \wedge_R A \to A$. One can form the augmentation ideal $I_A(A \wedge_R A)$ as the non-unital commutative $A$-algebra arising as the point-set fiber of the augmentation map. This functorial procedure is the right adjoint in a Quillen equivalence between the categories of non-unital commutative $A$-algebras and augmented commutative $A$-algebras; the left adjoint is given by formally adding a unit: \[\begin{tikzcd}{\rm Nuca}_A \ar[shift left = 1]{r}{A \vee -} &  \ar[shift left = 1]{l}{I_A} \vspace{10 mm} {\cal C}{\rm Sp}^{\Sigma}_{A // A}\end{tikzcd}\] Moreover, given any non-unital commutative $A$-algebra $N$, one can form the $A$-module of \emph{indecomposables} $Q_A(N)$, defined as the point-set cofiber of the multiplication map. This construction is the left adjoint in a Quillen adjunction between the categories of $A$-modules and non-unital commutative $A$-algebras, where the right adjoint is given by considering any $A$-module as a non-unital commutative $A$-algebra with trivial multiplication. In conclusion, there are Quillen adjunctions \begin{equation}\label{basterraadj}\begin{tikzcd}\Mod_A \ar[shift right = 1]{r} & \ar[shift right = 1,swap]{l}{Q_A} \vspace{10 mm} {\rm Nuca}_A \ar[shift left = 1]{r}{A \vee -} &  \ar[shift left = 1]{l}{I_A} \vspace{10 mm} {\cal C}{\rm Sp}^{\Sigma}_{A // A}\end{tikzcd}\end{equation} with left adjoints on top, and the right-hand adjunction is a Quillen equivalence.

\begin{definition}\label{def:taq} Let $B$ be an augmented commutative $A$-algebra, and define the $A$-module \[{\rm taq}^A(B) := Q^{\Bbb L}_AI^{\Bbb R}_A(B)\] by evaluating the composite of the derived functors $I^{\Bbb R}$ and $Q^{\Bbb L}$ at $B$. If $R \to A$ is a cofibration of cofibrant commutative symmetric ring spectra, we define the \emph{topological Andr\'e--Quillen homology} of $A$ relative to $R$ to be the $A$-module \[{\rm TAQ}^R(A) := {\rm taq}^A(A \wedge_R A),\] where $A \wedge_R A$ is considered an augmented commutative $A$-algebra via the multiplication map $A \wedge_R A \to A$. 
\end{definition}

\begin{proposition}\label{prop:taqcorepresents}\cite[Proposition 3.2]{Bas99} The space of $R$-algebra derivations from $A$ to $X$ is corepresented by ${\rm TAQ}^R(A)$: that is, there is a natural weak equivalence \[{\rm Map}_{{\rm Mod}_A}({\rm TAQ}^R(A), X) \simeq {\rm Der}_R(A, X).\]
\end{proposition}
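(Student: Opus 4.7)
The plan is to identify both sides of the desired equivalence by composing three adjunctions: the base-change adjunction $A \wedge_R -$ between commutative $R$-algebras and commutative $A$-algebras (restricted to appropriate comma categories), followed by the two Quillen adjunctions in (\ref{basterraadj}) whose composite defines ${\rm taq}^A$. Applied to the object $A$, viewed as augmented over itself by the identity, base change yields $A \wedge_R A$ with the multiplication as its augmentation; chasing through the remaining two adjunctions then identifies $Q_A^{\Bbb L} I_A^{\Bbb R}(A \wedge_R A) = {\rm TAQ}^R(A)$ as the corepresenting object.

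First I would note that extension of scalars along $R \to A$ induces a Quillen adjunction between ${\cal C}{\rm Sp}^{\Sigma}_{R // A}$ and ${\cal C}{\rm Sp}^{\Sigma}_{A // A}$, with left adjoint sending the cofibrant object $A$ (over and under itself) to $A \wedge_R A$ with the multiplication as augmentation. Applying this adjunction to derived mapping spaces gives
\[{\rm Der}_R(A, X) = {\rm Map}_{{\cal C}{\rm Sp}^{\Sigma}_{R // A}}(A, A \vee_f X) \simeq {\rm Map}_{{\cal C}{\rm Sp}^{\Sigma}_{A // A}}(A \wedge_R A, A \vee_f X).\]
Next, using the Quillen equivalence $(A \vee -, I_A)$, the right-hand side is weakly equivalent to ${\rm Map}_{{\rm Nuca}_A}(I_A^{\Bbb R}(A \wedge_R A), I_A(A \vee_f X))$. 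Since $A \vee_f X$ is a fibrant replacement of $A \vee X$ over $A$ and $I_A(A \vee X) = X$ on the nose, the second variable is weakly equivalent to $X$, viewed as a non-unital commutative $A$-algebra with trivial multiplication. Finally, the Quillen adjunction $Q_A \dashv (\text{trivial multiplication})$ gives
\[{\rm Map}_{{\rm Nuca}_A}(I_A^{\Bbb R}(A \wedge_R A), X) \simeq {\rm Map}_{{\rm Mod}_A}(Q_A^{\Bbb L} I_A^{\Bbb R}(A \wedge_R A), X) = {\rm Map}_{{\rm Mod}_A}({\rm TAQ}^R(A), X).\]

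The main technical burden will be careful bookkeeping of derived functors: one must replace $A \wedge_R A$ by a cofibrant object of ${\cal C}{\rm Sp}^{\Sigma}_{A // A}$ before applying $I_A$, and the resulting non-unital $A$-algebra by a cofibrant object of ${\rm Nuca}_A$ before applying $Q_A$. The fact that the middle adjunction is a Quillen equivalence, together with the fibrancy of $A \vee_f X$ arranged in Definition \ref{squarezero}, is precisely what ensures that the composition of derived unit/counit maps does not introduce spurious corrections on the target side and that the identification of ${\rm TAQ}^R(A)$ with $Q_A^{\Bbb L} I_A^{\Bbb R}(A \wedge_R A)$ is legitimate. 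Once these cofibrancy and fibrancy hypotheses are in place, the chain of weak equivalences above is essentially formal and manifestly natural in $X$.
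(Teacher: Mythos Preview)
Your proposal is correct and follows essentially the same approach as the paper's proof sketch: both compose the base-change adjunction between ${\cal C}{\rm Sp}^{\Sigma}_{R // A}$ and ${\cal C}{\rm Sp}^{\Sigma}_{A // A}$ with the two adjunctions in (\ref{basterraadj}). The only difference is cosmetic: the paper runs the chain from ${\rm Map}_{{\rm Mod}_A}({\rm TAQ}^R(A), X)$ toward ${\rm Der}_R(A, X)$ and phrases the last step as ``restriction of scalars,'' whereas you run it in the opposite direction and phrase the first step as ``extension of scalars''; these are the two sides of the same adjunction.
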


\begin{proof}[Proof sketch] The adjunctions (\ref{basterraadj}), using that the right-hand adjunction is a Quillen equivalence, provides a natural weak equivalence \[{\rm Map}_{{\rm Mod}_A}({\rm TAQ}^R(A), X) \simeq {\rm Map}_{{\cal C}{\rm Sp}^{\Sigma}_{A//A}}(A \wedge_R A, A \vee_f X).\] The result follows by restriction of scalars. 
\end{proof}

The following change-of-rings lemma will be used on numerous occasions:

\begin{lemma}\label{lem:changeofrings} Let $C$ be a cofibrant commutative augmented $A$-algebra and let $A \to B$ be a cofibration of commutative symmetric ring spectra. Then the $B$-module spectra $B \wedge_A {\rm taq}^A(C)$ and ${\rm taq}^B(B \wedge_A C)$ are naturally weakly equivalent. 
\end{lemma}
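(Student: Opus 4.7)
The plan is to decompose ${\rm taq}^A = Q^{\mathbb{L}}_A \circ I^{\mathbb{R}}_A$ as in Definition \ref{def:taq} and verify that the base-change functor $B \wedge_A -$ commutes, up to natural weak equivalence, with each of the two pieces separately. Once both commutations are established, splicing them yields the desired natural equivalence $B \wedge_A {\rm taq}^A(C) \simeq {\rm taq}^B(B \wedge_A C)$. I would treat indecomposables first and augmentation ideals second, since the former is formal and the latter requires a mild appeal to stability.

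For the indecomposables, recall that for a cofibrant non-unital commutative $A$-algebra $N$ the $A$-module $Q_A(N)$ is the pushout (equivalently, the cofibre) of the multiplication $N \wedge_A N \to N$. The functor $B \wedge_A - \colon \Mod_A \to \Mod_B$ is a strong symmetric monoidal left Quillen functor, and therefore preserves cofibres and relative smash products. Hence $B \wedge_A Q_A(N) \simeq Q_B(B \wedge_A N)$, and the identification is compatible with the non-unital multiplicative structures, as both sides inherit theirs from the multiplication on $N$. For the augmentation ideal, after (if necessary) fibrantly replacing $C$ over $A$, the underlying $A$-module of $I_A(C)$ is the point-set fibre of the augmentation $C \to A$. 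Because $B \wedge_A -$ is left Quillen between the stable model categories $\Mod_A$ and $\Mod_B$, it preserves (co)fibre sequences; applying it to the fibre sequence $I_A(C) \to C \to A$ produces a fibre sequence $B \wedge_A I_A(C) \to B \wedge_A C \to B$, identifying $B \wedge_A I_A(C)$ with $I_B(B \wedge_A C)$. Again the multiplicative comparison is automatic, as both non-unital structures arise by restriction from the multiplication on $C$.

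Combining the two commutations produces the chain
\[ B \wedge_A {\rm taq}^A(C) = B \wedge_A Q^{\mathbb{L}}_A I^{\mathbb{R}}_A(C) \simeq Q^{\mathbb{L}}_B\big(B \wedge_A I^{\mathbb{R}}_A(C)\big) \simeq Q^{\mathbb{L}}_B I^{\mathbb{R}}_B(B \wedge_A C) = {\rm taq}^B(B \wedge_A C). \]
The step I expect to require the most care is purely the model-categorical bookkeeping underlying the displayed chain: one must simultaneously arrange that $C$ is cofibrant as an augmented $A$-algebra (so that $B \wedge_A C$ correctly models the derived base change and may be fed into ${\rm taq}^B$), fibrant over $A$ (so that $I_A(C)$ has the right homotopy type), and that the intermediate non-unital algebra $I_A(C)$ is cofibrant enough for $Q_B \circ (B \wedge_A -)$ to model the derived functor. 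The hypothesis that $A \to B$ is a cofibration of cofibrant commutative symmetric ring spectra ensures the relative smash product is derived and, combined with the fact that $B \wedge_A -$ is left Quillen on augmented commutative algebras, allows each step of the chain above to be validated on properly replaced models.
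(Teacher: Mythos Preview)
Your argument is correct, but the paper takes a different route. Rather than decomposing ${\rm taq}^A = Q^{\mathbb L}_A \circ I^{\mathbb R}_A$ and pushing $B \wedge_A -$ through each factor, the paper gives a Yoneda-style proof: for any fibrant $B$-module $X$, the adjunctions~(\ref{basterraadj}) together with restriction of scalars identify ${\rm Map}_{{\rm Mod}_B}(B \wedge_A {\rm taq}^A(C), X)$ with ${\rm Map}_{{\cal C}{\rm Sp}^{\Sigma}_{A//A}}(C, A \vee_f X)$; the homotopy cartesian square
\[
\begin{tikzcd}[row sep = small]
A \vee_f X \ar{r} \ar{d} & B \vee_f X \ar{d} \\
A \ar{r} & B
\end{tikzcd}
\]
and extension of scalars then identify this with ${\rm Map}_{{\cal C}{\rm Sp}^{\Sigma}_{B//B}}(B \wedge_A C, B \vee_f X)$, which the adjunctions~(\ref{basterraadj}) again identify with ${\rm Map}_{{\rm Mod}_B}({\rm taq}^B(B \wedge_A C), X)$.

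The trade-off is exactly the one you anticipated in your last paragraph. The corepresentability argument bypasses all of the intermediate cofibrant/fibrant bookkeeping you flagged: there is no need to track whether $I^{\mathbb R}_A(C)$ is cofibrant enough for $Q^{\mathbb L}$, nor to produce an explicit non-unital algebra comparison map $B \wedge_A I_A(C) \to I_B(B \wedge_A C)$, because everything is packaged into adjunctions on mapping spaces. Your direct approach, on the other hand, makes the base-change behaviour of each constituent of ${\rm taq}$ visible, and the stability trick for $I_A$ is the genuinely interesting step. Both arguments are valid; the paper's is shorter and better suited to how the lemma is later used (inside mapping-space manipulations in the proof of Theorem~\ref{thm:taqcorepresents}), while yours is more hands-on and would generalise more readily to settings where one wants finer control over the individual functors.
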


\begin{proof} By restriction of scalars and the adjunctions (\ref{basterraadj}), there is a natural weak equivalence \[{\rm Map}_{{\rm Mod}_B}(B \wedge_A {\rm taq}^A(C), X) \simeq {\rm Map}_{{\cal C}{\rm Sp}^{\Sigma}_{A // A}}(C, A \vee_f X)\] for any fibrant $B$-module $X$, which can be considered an $A$-module via the map $A \to B$. Using the homotopy cartesian square \[\begin{tikzcd}[row sep = small]A \vee_f X \ar{r} \ar{d} & B \vee_f X \ar{d} \\ A \ar{r} & B\end{tikzcd}\] and extension of scalars, we infer a natural weak equivalence \[{\rm Map}_{{\cal C}{\rm Sp}^{\Sigma}_{A // A}}(C, A \vee_f X) \simeq {\rm Map}_{{\cal C}{\rm Sp}^{\Sigma}_{B // B}}(B \wedge_A C, B \vee_f X),\] from which the result follows from the adjunctions (\ref{basterraadj}).  
\end{proof}

We will also need the transitivity sequence for ${\rm TAQ}$ as established in \cite{Bas99}. 

\begin{proposition}\label{prop:transitivity}  Let $R \xrightarrow{f} A \xrightarrow{g} B$ be cofibrations of cofibrant commutative symmetric ring spectra. Then there is a homotopy cofiber sequence \[B \wedge_A {\rm TAQ}^R(A) \to {\rm TAQ}^R(B) \to {\rm TAQ}^A(B)\] of $B$-module spectra.
\end{proposition}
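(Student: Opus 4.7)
The plan is to dualize the claim via Yoneda and reduce to a homotopy fiber sequence of derivation spaces. Since all three terms lie in ${\rm Mod}_B$, it suffices to show that applying ${\rm Map}_{{\rm Mod}_B}(-, X)$ to the claimed cofiber sequence yields a homotopy fiber sequence for every fibrant $B$-module $X$. Using Proposition \ref{prop:taqcorepresents} on the first two terms, and the extension-of-scalars Quillen adjunction together with Proposition \ref{prop:taqcorepresents} on the third, the target assertion becomes that
\[{\rm Der}_A(B, X) \to {\rm Der}_R(B, X) \to {\rm Der}_R(A, X)\]
is a homotopy fiber sequence, where in the right-hand term $X$ is viewed as an $A$-module by restriction along $g$.

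Next, I identify the maps. The middle map will be restriction along $g$. To make sense of the target, I use the natural homotopy pullback square
\[\begin{tikzcd}[row sep = small]A \vee_f X \ar{r} \ar{d} & B \vee_f X \ar{d} \\ A \ar{r}{g} & B\end{tikzcd}\]
which arises because the underlying pullback $A \times_B (B \vee X) \cong A \vee X$ is preserved by fibrant replacement over the respective bases. This square identifies ${\rm Map}^{R/}_{/B}(A, B \vee_f X)$ with ${\rm Map}^{R/}_{/A}(A, A \vee_f X) = {\rm Der}_R(A, X)$, and under this identification the restriction of an $R$-derivation $B \to B \vee_f X$ along $g$ lands exactly there.

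Finally, I identify the fiber. Fix as basepoint of ${\rm Der}_R(A, X)$ the zero derivation, which via the pullback above is the composite $A \xrightarrow{g} B \hookrightarrow B \vee_f X$. The homotopy fiber of the restriction map over this basepoint is tautologically the space of $R$-algebra maps $B \to B \vee_f X$ over $B$ whose restriction to $A$ equals this composite --- that is, the space of $A$-algebra maps $B \to B \vee_f X$ over $B$. This is precisely ${\rm Der}_A(B, X)$, which is what I wanted.

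The main obstacle is the model-categorical bookkeeping needed to justify the homotopy pullback identification above and to ensure the restriction map is a fibration (so that taking point-set fibers computes the homotopy fiber). This uses the cofibration hypothesis on $g \colon A \to B$ and the fibrancy built into Definition \ref{squarezero}: the restriction map
\[{\rm Map}^{R/}_{/B}(B, B \vee_f X) \to {\rm Map}^{R/}_{/B}(A, B \vee_f X)\]
is a fibration in the positive stable model structure because it is the pullback of a fibration between fibrant objects along a cofibration between cofibrant objects in the pointed category ${\cal C}{\rm Sp}^{\Sigma}_{R // B}$. Once this is arranged, the rest of the argument is a formal unwinding of the universal properties in Proposition \ref{prop:taqcorepresents} and the standard base-change adjunction ${\rm Map}_{{\rm Mod}_B}(B \wedge_A -, -) \simeq {\rm Map}_{{\rm Mod}_A}(-, -)$.
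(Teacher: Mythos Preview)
Your proposal is correct and follows essentially the same approach as the paper: reduce via Yoneda and Proposition~\ref{prop:taqcorepresents} to showing that ${\rm Der}_A(B, X) \to {\rm Der}_R(B, X) \to {\rm Der}_R(A, X)$ is a homotopy fiber sequence. The paper states this observation in a single sentence without further justification, whereas you have carefully supplied the model-categorical details (the pullback square for square-zero extensions, the fibration argument for the restriction map) that make the claim rigorous.
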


\begin{proof} This follows from observing that \[{\rm Der}_R(A, X) \xleftarrow{} {\rm Der}_R(B, X) \xleftarrow{} {\rm Der}_A(B, X)\] is a homotopy fiber sequence for any fibrant $B$-module $X$. 
\end{proof}

\subsection{Logarithmic derivations} Following \cite{Sag14}, we introduce derivations in the context of pre-logarithmic ring spectra. 

\begin{construction}[Square-zero extensions of pre-logarithmic ring spectra]\label{squarezeroconstruction} Let $(A, M)$ be a pre-logarithmic ring spectrum and let $X$ be an $A$-module. We may then form the square-zero extension $A \vee_f X$ as in Definition \ref{squarezero}. We define a pre-logarithmic structure $(M + X)^{\cal J} \to \Omega^{\cal J}(A \vee_f X)$ as follows: form the (homotopy) pullback \[\begin{tikzcd}[row sep = small](1 + X)^{\cal J} \ar{r} \ar{d} & {\rm GL}_1^{\cal J}(A \vee_f X) \ar{d} \\ U^{\cal J} \ar{r} & {\rm GL}_1^{\cal J}(A)\end{tikzcd}\] of commutative ${\cal J}$-space monoids, where we recall that $U^{\cal J}$ is the initial commutative ${\cal J}$-space monoid. The coproduct $M \boxtimes (1 + X)^{\cal J}$ admits a map to $\Omega^{\cal J}(A \vee_f X)$ induced by the two composites \[M \rightarrow{} \Omega^{\cal J}(A) \xrightarrow{} \Omega^{\cal J}(A \vee_f X) \text{  and  } (1 + X)^{\cal J} \xrightarrow{} {\rm GL}_1^{\cal J}( A \vee_f X) \xrightarrow{} \Omega^{\cal J}(A \vee_f X).\] We define $(M + X)^{\cal J}$ via a factorization \[\begin{tikzcd}M \boxtimes (1 + X)^{\cal J} \ar[tail]{r}{\simeq} & (M + X)^{\cal J} \ar[two heads]{r} & M\end{tikzcd}\] of this morphism in the positive ${\cal J}$-model structure, whose lifting properties provides the desired pre-logarithmic structure $(M + X)^{\cal J} \to \Omega^{\cal J}(A \vee_f X)$: \[\begin{tikzcd}[row sep = small]M \boxtimes (1 + X)^{\cal J} \ar{rr} \ar[tail]{d}{\simeq} && \Omega^{\cal J}(A \vee_f X) \ar[two heads]{d} \\ (M + X)^{\cal J} \ar[dashed]{urr} \ar[two heads]{r} & M \ar{r} & \Omega^{\cal J}(A).\end{tikzcd}\] Here we have used that $\Omega^{\cal J}(-)$ is right Quillen, so that $A \vee_f X \to A$ gives rise to a positive fibration ${\Omega}^{\cal J}(A \vee_f X) \to \Omega^{\cal J}(A)$.
\end{construction}

\begin{definition}\label{squarezeroprelog} Let $(A, M)$ be a pre-logarithmic ring spectrum and let $X$ be an $A$-module. The \emph{square-zero extension} of $(A, M)$ by $X$ is the pre-logarithmic ring spectrum $(A \vee_f X, (M + X)^{\cal J})$ of Construction \ref{squarezeroconstruction}. 
\end{definition}

\begin{definition} Let $(R, P) \to (A, M)$ be a morphism of pre-logarithmic ring spectra and let $X$ be an $A$-module. The \emph{space of logarithmic derivations} with values in $X$ is the mapping space \[{\rm Der}_{(R, P)}((A, M), X) := {\rm Map}_{{\cal P}_{(R, P) // (A, M)}}((A, M), (A \vee_f X, (M + X)^{\cal J})).\]
\end{definition}

We recover the usual notion of derivation by embedding the category of commutative symmetric ring spectra in the category of pre-logarithmic ring spectra by means of the trivial log structures. The above definition is analogous to that of logarithmic derivations of pre-log rings, which are corepresented by the module of logarithmic K\"ahler differentials \cite[Proposition 4.27]{Rog09}.

\subsection{Logarithmic ${\rm TAQ}$} 

We are now prepared to explain our new construction of log ${\rm TAQ}$. By the description given in (\ref{relativemappingspace}), the space ${\rm Der}_{(R, P)}((A, M), X)$ of logarithmic derivations fits in a homotopy cartesian square 
\begin{equation}\label{logderprepullback}\begin{tikzcd}[row sep = small]{\rm Der}_{(R, P)}((A, M), X) \ar{r} \ar{d} & {\rm Map}_{{\cal C}{\rm Sp}^{\Sigma}_{R // A}}(A, A \vee_f X) \ar{d} \\ {\rm Map}_{{\cal C}{\cal S}^{\cal J}_{P // M}}(M, (M + X)^{\cal J}) \ar{r} & {\rm Map}_{{\cal C}{\rm Sp}^{\Sigma}_{{\Bbb S}^{\cal J}[P] // A}}({\Bbb S}^{\cal J}[M], A \vee_f X).\end{tikzcd}\end{equation}

In \cite[Proposition 5.19]{Sag14}, it is proven that the lower left-hand mapping space is corepresented by (the connective spectrum associated to) a certain quotient of Segal $\Gamma$-spaces. As the two right-hand mapping spaces are corepresented by appropriate ${\rm TAQ}$-terms, one obtains an $A$-module spectrum, which we denote here by ${\widetilde{\rm TAQ}}{}^{(R, P)}(A, M)$, by forming the homotopy pushout of the corepresenting objects. By construction, this $A$-module spectrum corepresents logarithmic derivations.

We now propose a new definition of log ${\rm TAQ}$. This makes use of the functor ${\rm taq}^A$ of Definition \ref{def:taq}.

\begin{definition}\label{def:logtaq} Let $(R, P) \to (A, M)$ be a cofibration of cofibrant pre-logarithmic ring spectra. The \emph{logarithmic topological Andr\'e--Quillen homology} of $(A, M)$ relative to $(R, P)$ is the $A$-module spectrum \[{\rm TAQ}^{(R, P)}(A, M) := {\rm taq}^A((A \wedge_R A) \wedge_{{\Bbb S}^{\cal J}[M \boxtimes_P M]} {\Bbb S}^{\cal J}[(M \boxtimes_P M)^{\rm rep}]),\] where $(M \boxtimes_P M)^{\rm rep}$ denotes the repletion of the multiplication map $M \boxtimes_P M \to M$.
\end{definition}

We remark that ${\rm TAQ}^{(R, P)}(A, M)$ fits in a homotopy cocartesian square \[\begin{tikzcd}[row sep = small] A \wedge_{{\Bbb S}^{\cal J}[M]} {\rm taq}^{{\Bbb S}^{\cal J}[M]}({\Bbb S}^{\cal J}[M \boxtimes_P M]) \ar{r} \ar{d} & A \wedge_{{\Bbb S}^{\cal J}[M]} {\rm taq}^{{\Bbb S}^{\cal J}[M]}({\Bbb S}^{\cal J}[(M \boxtimes_P M)^{\rm rep}]) \ar{d} \\ {\rm taq}^A(A \wedge_R A) \ar{r} & {\rm TAQ}^{(R, P)}(A, M).\end{tikzcd}\] of $A$-module spectra, where the left-hand vertical map is by definition the natural map $A \wedge_{{\Bbb S}^{\cal J}[M]} {\rm TAQ}^{{\Bbb S}^{\cal J}[P]}({\Bbb S}^{\cal J}[M]) \to {\rm TAQ}^R(A)$. This bears a close resemblance to the defining homotopy cocartesian square of logarithmic ${\rm THH}$ from Definition \ref{def:logthh}. This gives rise to a close relationship between the two notions which we exploit in Section \ref{lastsection}. 

\begin{theorem}\label{thm:taqcorepresents} The $A$-module ${\rm TAQ}^{(R, P)}(A, M)$ corepresents logarithmic derivations. That is, there is a natural weak equivalence \[{\rm Map}_{{\rm Mod}_A}({\rm TAQ}^{(R, P)}(A, M), X) \simeq {\rm Der}_{(R, P)}((A, M), X)\] for any fibrant $A$-module $X$. In particular, the $A$-module ${\rm TAQ}^{(R, P)}(A, M)$ is naturally weakly equivalent to the version of log ${\rm TAQ}$ studied in \cite{Sag14}. 
\end{theorem}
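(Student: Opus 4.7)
The plan is to apply the Quillen adjunctions (\ref{basterraadj}) to the definition ${\rm TAQ}^{(R,P)}(A,M) = {\rm taq}^A(C)$, producing a natural weak equivalence
\[{\rm Map}_{{\rm Mod}_A}({\rm TAQ}^{(R,P)}(A,M), X) \simeq {\rm Map}_{{\cal C}{\rm Sp}^{\Sigma}_{A//A}}(C, A \vee_f X)\]
for any fibrant $A$-module $X$, where $C$ is as in Definition \ref{def:logtaq}. Since $C$ is a homotopy pushout of cofibrant augmented commutative $A$-algebras, this mapping space becomes a homotopy pullback of three corepresented mapping spaces, one from each vertex of the defining pushout. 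Similarly, by (\ref{relativemappingspace}), the space ${\rm Der}_{(R,P)}((A,M), X)$ of logarithmic derivations fits as the total homotopy pullback in (\ref{logderprepullback}). My strategy is to identify these two homotopy pullbacks by matching their three corepresented corners.

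First, the contribution from $A \wedge_R A$ identifies with ${\rm Map}_{{\cal C}{\rm Sp}^{\Sigma}_{R//A}}(A, A \vee_f X) = {\rm Der}_R(A, X)$ via the universal property of $A \wedge_R A$ as a coproduct in $R$-algebras over $A$, as in Proposition \ref{prop:taqcorepresents}; this recovers the top-right corner of (\ref{logderprepullback}). Second, via Lemma \ref{lem:changeofrings}, extension of scalars, and the adjunction ${\Bbb S}^{\cal J}[-] \dashv \Omega^{\cal J}$, the contribution from ${\Bbb S}^{\cal J}[M \boxtimes_P M] \simeq {\Bbb S}^{\cal J}[M] \wedge_{{\Bbb S}^{\cal J}[P]} {\Bbb S}^{\cal J}[M]$ identifies with ${\rm Map}_{{\cal C}{\rm Sp}^{\Sigma}_{{\Bbb S}^{\cal J}[P]//A}}({\Bbb S}^{\cal J}[M], A \vee_f X)$, matching the bottom-right corner of (\ref{logderprepullback}). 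Third, via the same chain of adjunctions, the contribution from ${\Bbb S}^{\cal J}[(M \boxtimes_P M)^{\rm rep}]$ identifies with ${\rm Map}_{{\cal C}{\cal S}^{\cal J}_{M//\Omega^{\cal J}(A)}}((M \boxtimes_P M)^{\rm rep}, \Omega^{\cal J}(A \vee_f X))$, which must be identified with the bottom-left corner ${\rm Map}_{{\cal C}{\cal S}^{\cal J}_{P//M}}(M, (M+X)^{\cal J})$.

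The crucial bottom-left identification proceeds in three substeps. (a) A map $(M \boxtimes_P M)^{\rm rep} \to \Omega^{\cal J}(A \vee_f X)$ over $\Omega^{\cal J}(A)$ and under $M$ factors through $(M+X)^{\cal J}$: by Lemma \ref{repleterewrite}, the repletion splits as $M \boxtimes W(M \boxtimes_P M)$ with $W(M \boxtimes_P M)$ grouplike, so the non-$M$ piece necessarily lands in the units-over-the-augmentation piece $(1+X)^{\cal J} \subset \Omega^{\cal J}(A \vee_f X)$, and hence the entire map factors through $(M+X)^{\cal J} \simeq M \boxtimes (1+X)^{\cal J}$. (b) The augmentation $(M+X)^{\cal J} \to M$ is \emph{replete}, because $((M+X)^{\cal J})^{\rm gp} \simeq M^{\rm gp} \boxtimes (1+X)^{\cal J}$ by the grouplikeness of $(1+X)^{\cal J}$, and the resulting square matches the repletion pullback of Lemma \ref{repletepullback}; hence maps $(M \boxtimes_P M)^{\rm rep} \to (M+X)^{\cal J}$ over and under $M$ correspond by the universal property of the repletion to maps $M \boxtimes_P M \to (M+X)^{\cal J}$ over and under $M$. (c) By the coproduct structure of $M \boxtimes_P M$ in ${\cal C}{\cal S}^{\cal J}_{P/}$, such maps correspond in turn to maps $M \to (M+X)^{\cal J}$ in ${\cal C}{\cal S}^{\cal J}_{P//M}$. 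The main obstacle is substeps (a) and (b), which together require carefully tracking how the grouplike and augmentation-fixing structures of source and target interact, and verifying repleteness of $(M+X)^{\cal J}$. The final assertion -- the natural weak equivalence with Sagave's $\widetilde{\rm TAQ}{}^{(R,P)}(A,M)$ -- then follows by uniqueness of corepresenting objects, since both $A$-modules corepresent the same functor $X \mapsto {\rm Der}_{(R,P)}((A,M), X)$.
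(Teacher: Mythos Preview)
Your proposal is correct and follows essentially the same route as the paper's proof. Your substeps (b), (a), and (c) correspond respectively to the paper's Lemma~\ref{repletemonoid} (repleteness of $(M+X)^{\cal J}$ over $M$), Lemmas~\ref{taqrewritelemma1} and~\ref{taqrewritelemma2} (the splitting $M \boxtimes W \simeq (M \boxtimes_P M)^{\rm rep}$ and the grouplike-hence-lands-in-units argument), and the extension-of-scalars step, assembled there into the chain~(\ref{bestdiagram}); the only cosmetic difference is that you apply the $({\Bbb S}^{\cal J},\Omega^{\cal J})$-adjunction at the outset rather than at the end, and you suppress the fibrant replacement ${\Bbb S}^{\cal J}[M]^{\rm f}$ that the paper inserts to make the adjunction step homotopically meaningful.
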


\begin{remark}\label{rem:logkahler} After proving Theorem \ref{thm:taqcorepresents}, we learned that there is a close analogy between our description of log ${\rm TAQ}$ and a description of the log K\"ahler differentials studied by Kato--Saito \cite[Section 4]{KS04}. In the same way that the module of K\"ahler differentials $\Omega^1_{A|R}$ associated to a map of discrete rings arise as the conormal of the diagonal map ${\rm Spec}(A) \to {\rm Spec}(A \otimes_R A)$, the log K\"ahler differentials $\Omega^1_{(A, M)|(R, P)}$ often arise as the conormal of a \emph{log diagonal map} out of ${\rm Spec}(A)$. Unwinding their definitions, we find that the definition of the log diagonal can be phrased in terms of repletion: For a map of discrete pre-log rings $(R, P) \to (A, M)$, it corresponds precisely to the augmentation map \[(A \otimes_R A) \otimes_{{\Bbb Z}[M \oplus_P M]} {\Bbb Z}[(M \oplus_P M)^{\rm rep}] \to A.\] This perspective will be elaborated upon in forthcoming joint work with Binda--Park--{\O}stv{\ae}r, in which we study Hochschild--Kostant--Rosenberg-type results in the context of log schemes.
\end{remark}

\begin{proof}[Proof of Theorem \ref{thm:taqcorepresents}] By extension of scalars, the square (\ref{logderprepullback}) can be rewritten as 
\begin{equation}\label{logderpullback}\begin{tikzpicture}[baseline= (a).base]
\node[scale=.97] (a) at (0,0){\begin{tikzcd}[row sep = small]{\rm Der}_{(R, P)}((A, M), X) \ar{r} \ar{d} & {\rm Map}_{{\cal C}{\rm Sp}^{\Sigma}_{A // A}}(A \wedge_R A, A \vee_f X) \ar{d} \\ {\rm Map}_{{\cal C}{\cal S}^{\cal J}_{M // M}}(M \boxtimes_P M, (M + X)^{\cal J}) \ar{r} & {\rm Map}_{{\cal C}{\rm Sp}^{\Sigma}_{{\Bbb S}^{\cal J}[M] // A}}({\Bbb S}^{\cal J}[M \boxtimes_P M], A \vee_f X).\end{tikzcd}}; \end{tikzpicture}\end{equation}
\noindent Here we have used that ${\Bbb S}^{\cal J} \colon {\cal C}{\cal S}^{\cal J} \to {\cal C}{\rm Sp}^{\Sigma}$ is strong symmetric monoidal, so that ${\Bbb S}^{\cal J}[M] \wedge_{{\Bbb S}^{\cal J}[P]} {\Bbb S}^{\cal J}[M] \cong {\Bbb S}^{\cal J}[M \boxtimes_P M]$ as commutative symmetric ring spectra.

We know that the right-hand morphism in the diagram (\ref{logderpullback}) is corepresented by the morphism $A \wedge_{{\Bbb S}^{\cal J}[M]} {\rm TAQ}{}^{{\Bbb S}^{\cal J}[P]}({\Bbb S}^{\cal J}[M]) \to {\rm TAQ}^R(A)$ by Proposition \ref{prop:taqcorepresents} and Lemma \ref{lem:changeofrings}. Hence our only remaining task is two produce a chain of equivalences relating the mapping spaces \begin{equation}\label{twomappingspaces}{\rm Map}_{{\cal C}{\cal S}^{\cal J}_{M // M}}(M \boxtimes_P M, (M + X)^{\cal J}) \text{ and } {\rm Map}_{{\cal C}{\rm Sp}^{\Sigma}_{{\Bbb S}^{\cal J}[M] // A}}({\Bbb S}^{\cal J}[(M \boxtimes_P M)^{\rm rep}], A \vee_f X)\end{equation} which are compatible with the maps to ${\rm Map}{}_{{\cal C}{\rm Sp}^{\Sigma}_{{\Bbb S}^{\cal J}[M] // A}}({\Bbb S}^{\cal J}[M \boxtimes_P M], A \vee_f X)$. Indeed, using Lemma \ref{lem:changeofrings} it is easy to see that the latter of the two mapping spaces is corepresented by $A \wedge_{{\Bbb S}^{\cal J}[M]} {\rm taq}^{{\Bbb S}^{\cal J}[M]}({\Bbb S}^{\cal J}[(M \boxtimes_P M)^{\rm rep}])$, and so the result follows by considering homotopy cocartesian square of corepresenting objects associated to~(\ref{logderpullback}). 

By Lemmas \ref{repletemonoid}, \ref{taqrewritelemma1} and \ref{taqrewritelemma2} below and the paragraphs between them, we obtain the following diagram with the indicated weak equivalences: \begin{equation}\label{bestdiagram}\begin{tikzpicture}[baseline= (a).base]
\node[scale=.91] (a) at (0,0){\begin{tikzcd}[column sep = tiny] {\rm Map}^{M/}_{/M}(M \boxtimes_P M, (M + X)^{\cal J}) \ar{r} & {\rm Map}^{{\Bbb S}^{\cal J}[M]/}_{/A}({\Bbb S}^{\cal J}[M \boxtimes_P M], A \vee_f X) \\ {\rm Map}^{M/}_{/M}((M \boxtimes_P M)^{\rm rep}, (M + X)^{\cal J})  \ar{u}{\simeq} \ar[swap]{u}{\text{Lemma } \ref{repletemonoid}} \ar{r} & {\rm Map}^{{\Bbb S}^{\cal J}[M]/}_{/A}({\Bbb S}^{\cal J}[(M \boxtimes_P M)^{\rm rep}], A \vee_f X)  \ar{u} \\  {\rm Map}^{M/}_{/M}((M \boxtimes_P M)^{\rm rep}, (M + X)^{\cal J})\ar[swap]{d}{\simeq} \ar{d}{\text{Lemma } \ref{taqrewritelemma1}} \ar{r} \ar{u}{=} & {\rm Map}^{{\Bbb S}^{\cal J}[M]/}_{/{\Bbb S}^{\cal J}[M]}({\Bbb S}^{\cal J}[(M \boxtimes_P M)^{\rm rep}], {\Bbb S}^{\cal J}[M] \vee_f X) \ar[swap]{d}{\simeq} \ar{d}{\text{Lemma } \ref{taqrewritelemma1}} \ar{u}{\simeq} \ar[swap]{u}{\text{Pullback along } {\Bbb S}^{\cal J}[M] \to A}\\ {\rm Map}_{/M}(W, (M + X)^{\cal J}) \ar[swap]{d}{=} \ar{r} & {\rm Map}_{/{\Bbb S}^{\cal J}[M]}({\Bbb S}^{\cal J}[W], {\Bbb S}^{\cal J}[M] \vee_f X) \ar[swap]{d}{\simeq} \ar{d}{\text{Pullback along } {\Bbb S}^{\cal J}[M] \to {\Bbb S}^{\cal J}[M]^{\rm f}} \\  {\rm Map}_{/M}(W, (M + X)^{\cal J}) \ar{r}  & {\rm Map}_{/{\Bbb S}^{\cal J}[M]^{\rm f}}({\Bbb S}^{\cal J}[W], {\Bbb S}^{\cal J}[M]^{\rm f} \vee_f X) \\ {\rm Map}_{/U^{\cal J}}(W, (1 + X)^{\cal J}) \ar{u}{\simeq} \ar[swap]{u}{\text{Lemma } \ref{taqrewritelemma2}} \ar{r}{\simeq} \ar[swap]{r}{\text{Lemma } \ref{taqrewritelemma2}} & {\rm Map}_{/\Omega^{\cal J}({\Bbb S}^{\cal J}[M]^{\rm f})}(W, \Omega^{\cal J}({\Bbb S}^{\cal J}[M]^{\rm f} \vee_f X)). \ar{u}{\simeq} \ar[swap]{u}{({\Bbb S}^{\cal J}, \Omega^{\cal J})\text{-adjunction}}\end{tikzcd}};\end{tikzpicture}\end{equation} By the two-out-three-property, the lower horizontal map in the top square of the diagram (\ref{bestdiagram}) is therefore a weak equivalence. This provides a chain of weak equivalences relating the mapping spaces (\ref{twomappingspaces}), and commutativity of the top square in the diagram (\ref{bestdiagram}) gives the desired compatability. As we have previously reduced the theorem to the existence of such a chain of weak equivalences, this concludes the proof. \end{proof}

The following series of lemmas were used in the above proof.

\begin{lemma}\label{repletemonoid} Let $M$ be a cofibrant commutative ${\cal J}$-space monoid. Then the repletion map $(M + X)^{\cal J} \xrightarrow{} ((M + X)^{\cal J})^{\rm rep}$ over $M$ is a ${\cal J}$-equivalence. In particular, the repletion map $M \boxtimes_P M \to (M \boxtimes_P M)^{\rm rep}$ induces a weak equivalence \[ {\rm Map}^{M/}_{/M}((M \boxtimes_P M)^{\rm rep}, (M + X)^{\cal J}) \xrightarrow{\simeq} {\rm Map}^{M/}_{/M}(M \boxtimes_P M, (M + X)^{\cal J})\] of mapping spaces.
\end{lemma}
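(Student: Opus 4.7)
The strategy is to apply the characterization of repletion given in Lemma \ref{repletepullback}. The augmentation $(M + X)^{\cJ} \to M$ admits a section, induced by the inclusion $M \simeq M \boxtimes U^{\cJ} \to M \boxtimes (1 + X)^{\cJ}$ followed by the weak equivalence $M \boxtimes (1 + X)^{\cJ} \xrightarrow{\simeq} (M + X)^{\cJ}$ from Construction \ref{squarezeroconstruction}, and so it is in particular virtually surjective. By Lemma \ref{repletepullback} and Remark \ref{abuse}, to prove that $(M + X)^{\cJ} \to ((M + X)^{\cJ})^{\rm rep}$ is a $\cJ$-equivalence it therefore suffices to show that the natural square
\[\begin{tikzcd}[row sep=small](M + X)^{\cJ} \ar{r} \ar{d} & ((M + X)^{\cJ})^{\rm gp} \ar{d} \\ M \ar{r} & M^{\rm gp}\end{tikzcd}\]
is homotopy cartesian in the positive $\cJ$-model structure.

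The key observation for analyzing this square is that $(1 + X)^{\cJ}$ is grouplike: it is defined as a homotopy pullback of the grouplike commutative $\cJ$-space monoids $U^{\cJ}$, ${\rm GL}_1^{\cJ}(A)$, and ${\rm GL}_1^{\cJ}(A \vee_f X)$, so Lemma \ref{monstr} together with the long exact sequence of the pullback forces $\pi_0((1+X)^{\cJ}_{h\cJ})$ to be a group. Since $(M + X)^{\cJ}$ is by construction weakly equivalent to $M \boxtimes (1 + X)^{\cJ}$, and since Lemma \ref{monstr} gives $\pi_0((M^{\rm gp} \boxtimes (1 + X)^{\cJ})_{h\cJ}) \cong \pi_0(M^{\rm gp}_{h\cJ}) \times \pi_0((1 + X)^{\cJ}_{h\cJ})$, which is already a group, an argument analogous to Lemma \ref{gpcomppush} identifies $((M + X)^{\cJ})^{\rm gp}$ with $M^{\rm gp} \boxtimes (1 + X)^{\cJ}$.

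To verify the square above is homotopy cartesian, we apply $(-)_{h\cJ}$ and invoke \cite[Corollary 11.4]{SS12}, which states that this functor detects homotopy cartesian squares. Using Lemma \ref{monstr} and the cofibrancy hypothesis, the square reduces to
\[\begin{tikzcd}[row sep=small]M_{h\cJ} \times (1+X)^{\cJ}_{h\cJ} \ar{r} \ar{d} & M^{\rm gp}_{h\cJ} \times (1+X)^{\cJ}_{h\cJ} \ar{d} \\ M_{h\cJ} \ar{r} & M^{\rm gp}_{h\cJ}\end{tikzcd}\]
with the evident projections and group completion maps. This is evidently (even strictly) cartesian and homotopy cartesian, completing the first claim. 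The main technical point along the way is the identification of the group completion of $M \boxtimes (1+X)^{\cJ}$, where the grouplike factor is essential.

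For the second statement, once we know $(M + X)^{\cJ} \to ((M + X)^{\cJ})^{\rm rep}$ is a $\cJ$-equivalence between positive fibrant objects over $M$, simplicial enrichment of the positive $\cJ$-model structure on ${\cal C}{\cal S}^{\cJ}_{M // M}$ gives an equivalence ${\rm Map}^{M/}_{/M}(A, (M+X)^{\cJ}) \simeq {\rm Map}^{M/}_{/M}(A, ((M+X)^{\cJ})^{\rm rep})$ for every cofibrant $A$ over $M$. It thus suffices to show that the repletion map $M \boxtimes_P M \to (M \boxtimes_P M)^{\rm rep}$ induces a weak equivalence on $\mathrm{Map}^{M/}_{/M}(-, ((M + X)^{\cJ})^{\rm rep})$. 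This follows from simplicial enrichment of the group completion model structure (inherited from the positive $\cJ$-model structure, since group completion is a left Bousfield localization): the source map is a trivial cofibration in the group completion model structure over $M$, and $((M+X)^{\cJ})^{\rm rep} \to M$ is a fibration in that model structure by construction.
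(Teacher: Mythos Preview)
Your proof is correct and follows essentially the same approach as the paper's: both reduce via Lemma~\ref{repletepullback} to checking that the square with vertices $(M+X)^{\cJ}$, $((M+X)^{\cJ})^{\rm gp}$, $M$, $M^{\rm gp}$ is homotopy cartesian, identify the group completion as $M^{\rm gp} \boxtimes (1+X)^{\cJ}$ using that $(1+X)^{\cJ}$ is grouplike together with Lemma~\ref{gpcomppush}, and then pass to $(-)_{h\cJ}$ via \cite[Corollary~11.4]{SS12} and Lemma~\ref{monstr}; the second statement is likewise deduced in both by replacing $(M+X)^{\cJ}$ with its repletion and using that $((M+X)^{\cJ})^{\rm rep}$ is fibrant over $M$ in the group completion model structure. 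One small remark: your invocation of Lemma~\ref{monstr} in the argument that $(1+X)^{\cJ}$ is grouplike is misplaced---grouplikeness follows directly from $(1+X)^{\cJ}$ being a homotopy pullback of grouplike objects, without any appeal to the monoidal structure map.
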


\begin{proof} The natural map $(M + X)^{\cal J} \to M$ is virtually surjective, as it arises as factorization of the projection \[M \boxtimes (1 + X)^{\cal J} \to M \boxtimes U^{\cal J} \cong M.\] By Lemma \ref{repletepullback} it suffices to show that the square \[\begin{tikzcd}[row sep = small] M \boxtimes (1 + X)^{\cal J} \ar{r} \ar{d} & (M \boxtimes (1 + X)^{\cal J})^{\rm gp} \ar{d} \\ M \ar{r} & M^{\rm gp}.\end{tikzcd}\]
is homotopy cartesian with respect to the positive ${\cal J}$-model structure. Since $M^{\rm gp}$ is cofibrant and $(1 + X)^{\cal J}$ is grouplike, there are ${\cal J}$-equivalences \[M^{\rm gp} \boxtimes (1 + X)^{\cal J} \xrightarrow{\simeq} M^{\rm gp} \boxtimes ((1 + X)^{\cal J})^{\rm gp} \xrightarrow{\simeq} (M \boxtimes (1 + X)^{\cal J})^{\rm gp},\] where the last ${\cal J}$-equivalence arises from Lemma \ref{gpcomppush}. In conclusion, the square which we wish to prove is homotopy cartesian is of the form \[\begin{tikzcd}[row sep = small] M \boxtimes (1 + X)^{\cal J} \ar{r} \ar{d} & M^{\rm gp} \boxtimes (1 + X)^{\cal J} \ar{d} \\ M \ar{r} & M^{\rm gp}\end{tikzcd}\] up to ${\cal J}$-equivalence. This square is homotopy cartesian precisely when the induced square on Bousfield--Kan homotopy colimits over ${\cal J}$ is \cite[Corollary 11.4]{SS12}, and so the first claim follows. This gives rise to a commutative diagram \[\begin{tikzcd}[row sep = small]{\rm Map}^{M/}_{/M}((M \boxtimes_P M)^{\rm rep}, (M + X)^{\cal J}) \ar{r} \ar{d}{\simeq} & {\rm Map}^{M/}_{/M}(M \boxtimes_P M, (M + X)^{\cal J}) \ar{d}{\simeq} \\ {\rm Map}^{M/}_{/M}((M \boxtimes_P M)^{\rm rep}, ((M + X)^{\cal J})^{\rm rep}) \ar{r}{\simeq} & {\rm Map}^{M/}_{/M}(M \boxtimes_P M, ((M + X)^{\cal J})^{\rm rep}).\end{tikzcd}\] The vertical maps are weak equivalences by the first part of the lemma. The lower horizontal map is a weak equivalence since $((M + X)^{\cal J})^{\rm rep}$ is fibrant over $M$ in the group completion model structure, in which the repletion map is a weak equivalence by definition. This gives the second statement.
\end{proof}

Pulling back along ${\Bbb S}^{\cal J}[M] \to A$ and arguing as in the proof of Lemma \ref{lem:changeofrings}, one obtains the second square from the top in the diagram (\ref{bestdiagram}). The following observation provides the third square. We remark that in the statement we consider $W(M \boxtimes_P M)$ (as defined in Definition \ref{w}) as a commutative ${\cal J}$-space monoid over $M$ via its augmentation to the initial commutative ${\cal J}$-space monoid $U^{\cal J}$. For brevity, we shall simply write $W$ for the commutative ${\cal J}$-space monoid $W(M \boxtimes_P M)$.

\begin{lemma}\label{taqrewritelemma1} The weak equivalence $M \boxtimes W \to (M \boxtimes_P M)^{\rm rep}$ of Lemma \ref{repleterewrite} and restriction of scalars induce a commutative diagram \[\begin{tikzpicture}[baseline= (a).base]
\node[scale=.97] (a) at (0,0){\begin{tikzcd}[column sep = small, row sep = small]{\rm Map}^{M/}_{/M}((M \boxtimes_P M)^{\rm rep}, (M + X)^{\cal J}) \ar{r} \ar{d} & {\rm Map}_{/M}(W, (M + X)^{\cal J}) \ar{d} \\ {\rm Map}^{{\Bbb S}^{\cal J}[M]/}_{/{\Bbb S}^{\cal J}[M]}({\Bbb S}^{\cal J}[(M \boxtimes_P M)^{\rm rep}], {\Bbb S}^{\cal J}[M] \vee_f X)  \ar{r} & {\rm Map}_{/{\Bbb S}^{\cal J}[M]}({\Bbb S}^{\cal J}[W], {\Bbb S}^{\cal J}[M] \vee_f X)\end{tikzcd}}; \end{tikzpicture}\] in which the horizontal maps are weak equivalences. \qed
\end{lemma}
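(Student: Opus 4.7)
The plan is to reduce everything to the adjunction between $M\boxtimes-$ and the forgetful functor from $\mathcal{C}\mathcal{S}^{\cal J}_{M/}$ (respectively, its analogue for commutative symmetric ring spectra), after first replacing $(M\boxtimes_P M)^{\rm rep}$ by $M\boxtimes W$ using Lemma~\ref{repleterewrite}. The two horizontal maps in the diagram will then be exhibited as composites of a weak equivalence induced by that replacement, followed by a natural isomorphism of mapping spaces coming from the free/forgetful adjunction.

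First I would treat the source–replacement step. Lemma~\ref{repleterewrite} supplies a ${\cal J}$-equivalence $M\boxtimes W \xrightarrow{\simeq} (M\boxtimes_P M)^{\rm rep}$, and I may assume this is a weak equivalence between sufficiently cofibrant objects of $\mathcal{C}\mathcal{S}^{\cal J}_{M//M}$ (here the cofibrancy of $P\to M$ ensures $(M\boxtimes_P M)^{\rm rep}$ and $M\boxtimes W$ can be arranged to be cofibrant under and over $M$). Since $(M+X)^{\cal J}$ is fibrant over $M$ by construction, precomposition yields a weak equivalence of the relevant mapping spaces in $\mathcal{C}\mathcal{S}^{\cal J}_{M//M}$. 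On the spectrum side, applying ${\Bbb S}^{\cal J}[-]$ to this ${\cal J}$-equivalence between cofibrant commutative ${\cal J}$-space monoids augmented over the cofibrant $M$ yields a stable equivalence by \cite[Corollary~8.8]{RSS15}; since ${\Bbb S}^{\cal J}[M]\vee_f X$ is fibrant over ${\Bbb S}^{\cal J}[M]$ by Definition~\ref{squarezero}, precomposition produces a weak equivalence of mapping spaces in ${\cal C}{\rm Sp}^{\Sigma}_{{\Bbb S}^{\cal J}[M]//{\Bbb S}^{\cal J}[M]}$.

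Next I would apply the coproduct–forgetful adjunction. In any cocomplete symmetric monoidal category, the functor $M\boxtimes-$ is left adjoint to the forgetful functor $\mathcal{C}\mathcal{M}_{M/}\to \mathcal{C}\mathcal{M}$; restricting to the slice over $M$ and noting that the augmentation of $M\boxtimes W$ to $M$ factors through $M\boxtimes U^{\cal J}\cong M$ via the structure map $W\to U^{\cal J}$, one obtains a natural isomorphism
\[
{\rm Map}^{M/}_{/M}(M\boxtimes W, (M+X)^{\cal J})\cong {\rm Map}_{/M}(W, (M+X)^{\cal J}),
\]
where on the right $W$ is viewed as an object of $\mathcal{C}\mathcal{S}^{\cal J}_{/M}$ through the composite $W\to U^{\cal J}\to M$. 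The same formal adjunction in symmetric spectra, combined with strong symmetric monoidality of ${\Bbb S}^{\cal J}[-]$ (so that ${\Bbb S}^{\cal J}[M\boxtimes W]\cong {\Bbb S}^{\cal J}[M]\wedge {\Bbb S}^{\cal J}[W]$), produces the analogous natural isomorphism on the bottom row of the diagram.

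Composing the weak equivalence of the first step with the isomorphism of the second gives the two horizontal maps and shows both are weak equivalences. Commutativity of the square is then formal: it boils down to naturality of the $({\Bbb S}^{\cal J},\Omega^{\cal J})$-adjunction and to compatibility of the coproduct–forgetful adjunctions with the functor ${\Bbb S}^{\cal J}[-]$, which follows because ${\Bbb S}^{\cal J}[-]$ is symmetric monoidal and hence takes the coproduct $M\boxtimes W$ to the coproduct ${\Bbb S}^{\cal J}[M]\wedge {\Bbb S}^{\cal J}[W]$. The main obstacle I expect is bookkeeping around cofibrancy: one must ensure that both $M\boxtimes W$ and $(M\boxtimes_P M)^{\rm rep}$ can simultaneously be arranged to be cofibrant under and over $M$, and that the ${\cal J}$-equivalence between them is itself a cofibration or, failing that, can be replaced by one, so that the derived mapping space computations are legitimate. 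Once this is set up, the rest of the argument is formal naturality.
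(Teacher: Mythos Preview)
Your proposal is correct and matches the paper's intended argument. The paper marks this lemma with \qed\ and gives no explicit proof, treating it as immediate from the weak equivalence of Lemma~\ref{repleterewrite} together with the extension/restriction-of-scalars adjunction; you have simply spelled out those two steps (precomposition with the ${\cal J}$-equivalence $M\boxtimes W\to(M\boxtimes_P M)^{\rm rep}$, then the coproduct--forgetful adjunction identifying ${\rm Map}^{M/}_{/M}(M\boxtimes W,-)\cong{\rm Map}_{/M}(W,-)$ and its spectral analogue) and verified the cofibrancy bookkeeping needed for the derived mapping spaces to be well-defined.
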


Let ${\Bbb S}^{\cal J}[M]^{\rm f}$ denote a fibrant replacement of the commutative symmetric ring spectrum ${\Bbb S}^{\cal J}[M]$. The lifting properties of the positive projective model structure provides a map ${\Bbb S}^{\cal J}[M]^{\rm f} \to A$ under ${\Bbb S}^{\cal J}[M]$, under which we can consider the $A$-module $X$ as an ${\Bbb S}^{\cal J}[M]^{\rm f}$-module by restriction of scalars. This provides the fourth square from the top in the diagram (\ref{bestdiagram}) by pullback along ${\Bbb S}^{\cal J}[M] \to {\Bbb S}^{\cal J}[M]^{\rm f}$.

We now provide the bottom square in the diagram (\ref{bestdiagram}). For this we shall again exploit the fact that $W$ is a grouplike commutative ${\cal J}$-space monoid augmented over the initial object $U^{\cal J}$:

\begin{lemma}\label{taqrewritelemma2} There is a commutative solid arrow diagram
\[\begin{tikzcd}[row sep = small] {\rm Map}_{/M}(W, (M + X)^{\cal J}) \ar{r}{\simeq} \ar[dashed]{dr} & {\rm Map}_{/{\Bbb S}^{\cal J}[M]^{\rm f}}({\Bbb S}^{\cal J}[W], {\Bbb S}^{\cal J}[M]^{\rm f} \vee_f X) \\ {\rm Map}_{/U^{\cal J}}(W, (1 + X)^{\cal J}) \ar{u}{\simeq}  \ar{r}{\simeq}  & {\rm Map}_{/\Omega^{\cal J}({\Bbb S}^{\cal J}[M]^{\rm f})}(W, \Omega^{\cal J}({\Bbb S}^{\cal J}[M]^{\rm f} \vee_f X)) \ar{u}{\simeq} \end{tikzcd}\] in which all maps are weak equivalences. 
\end{lemma}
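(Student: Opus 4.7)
The plan is to produce each of the four solid arrows as a weak equivalence by a direct adjunction or homotopy-pullback argument; commutativity of the square and the equivalence of the dashed diagonal will then follow by naturality and the two-out-of-three property.

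First I would dispose of the right vertical, which is simply the Quillen adjunction $({\Bbb S}^{\cal J}, \Omega^{\cal J})$ applied in the slice model categories over the compatible bases ${\Bbb S}^{\cal J}[M]^{\rm f}$ and $\Omega^{\cal J}({\Bbb S}^{\cal J}[M]^{\rm f})$. Cofibrancy of ${\Bbb S}^{\cal J}[W]$ and fibrancy of ${\Bbb S}^{\cal J}[M]^{\rm f} \vee_f X$ over ${\Bbb S}^{\cal J}[M]^{\rm f}$ ensure that the derived adjunction computes the correct mapping spaces.

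For the left vertical, the essential claim is that the square of commutative ${\cal J}$-space monoids
\[
\begin{tikzcd}[row sep=small]
(1+X)^{\cal J} \ar{r} \ar{d} & (M+X)^{\cal J} \ar{d} \\
U^{\cal J} \ar{r} & M
\end{tikzcd}
\]
is homotopy cartesian in the positive ${\cal J}$-model structure. Granted this, since $W$ is augmented over $U^{\cal J}$ and the structure map $W \to M$ factors as $W \to U^{\cal J} \to M$ by construction of $W$, the universal property of the homotopy pullback immediately identifies the two mapping spaces. Homotopy cartesianness is equivalent, via the factorization $M \boxtimes (1+X)^{\cal J} \xrightarrow{\simeq} (M+X)^{\cal J} \twoheadrightarrow M$ defining $(M+X)^{\cal J}$, to the analogous claim for the coproduct $M \boxtimes (1+X)^{\cal J}$, which in turn follows from a Bousfield--Friedlander-theorem argument that exploits grouplikeness of $(1+X)^{\cal J}$ together with Lemma \ref{gpcomppush}, along the lines of the proof of Lemma \ref{repletemonoid}.

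For the bottom horizontal, grouplikeness of $W$ forces every map $W \to \Omega^{\cal J}({\Bbb S}^{\cal J}[M]^{\rm f} \vee_f X)$ over $\Omega^{\cal J}({\Bbb S}^{\cal J}[M]^{\rm f})$ to factor up to weak equivalence through the graded units ${\rm GL}_1^{\cal J}({\Bbb S}^{\cal J}[M]^{\rm f} \vee_f X)$. Combined with the defining homotopy pullback square of $(1+X)^{\cal J}$ from Construction \ref{squarezeroconstruction} — which yields the same homotopy type whether formed over $A$ or over ${\Bbb S}^{\cal J}[M]^{\rm f}$, since in both cases the fiber over the unit is intrinsic to the $A$-module $X$ — the universal property of pullbacks yields the desired weak equivalence. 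Once these three arrows are established, naturality of all the identifications guarantees commutativity of the square, and the top horizontal and dashed diagonal become weak equivalences by two-out-of-three. The main obstacle is the homotopy cartesianness claim in the left vertical, but this should go through by the now-standard Bousfield--Friedlander strategy already employed in Lemmas \ref{repletepullback}, \ref{repleterewrite} and \ref{repletemonoid}.
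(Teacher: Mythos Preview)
Your proposal is essentially correct and follows the same overall architecture as the paper's proof: establish the left vertical via the homotopy cartesianness of $(1+X)^{\cal J} \to (M+X)^{\cal J}$ over $U^{\cal J} \to M$, the bottom horizontal via grouplikeness of $W$ and the defining pullback square for $(1+X)^{\cal J}$, and the right vertical via the $({\Bbb S}^{\cal J}, \Omega^{\cal J})$-adjunction.

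Two points where the paper is sharper than your sketch. First, for the left vertical you invoke a Bousfield--Friedlander argument and Lemma~\ref{gpcomppush}; the paper instead uses the one-line observation that $U^{\cal J}_{h{\cal J}}$ is contractible together with \cite[Corollary 11.4]{SS12}, so the square is trivially homotopy cartesian after passing to $(-)_{h{\cal J}}$ (the right vertical becomes, up to the monoidal structure map, a product projection $M_{h{\cal J}} \times (1+X)^{\cal J}_{h{\cal J}} \to M_{h{\cal J}}$). Your reference to Lemma~\ref{repletemonoid} is slightly off target: that lemma handles the square over $M \to M^{\rm gp}$, not over $U^{\cal J} \to M$. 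Second, you defer commutativity to ``naturality,'' but the four solid arrows arise from genuinely different constructions (pullback, units-adjunction, $({\Bbb S}^{\cal J},\Omega^{\cal J})$-adjunction), so this is not automatic. The paper handles it by explicitly constructing the dashed diagonal from the structure map $(M+X)^{\cal J} \to \Omega^{\cal J}({\Bbb S}^{\cal J}[M]^{\rm f} \vee_f X)$ of Construction~\ref{squarezeroconstruction} and checking that each triangle commutes; the top horizontal is then a consequence rather than an input.
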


\begin{proof} As homotopy cartesian squares of commutative ${\cal J}$-space monoids are detected on Bousfield--Kan homotopy colimits \cite[Corollary 11.4]{SS12} and $U^{\cal J}_{h{\cal J}}$ is contractible, the square \[\begin{tikzcd}[row sep = small](1 + X)^{\cal J} \ar{d} \ar{r} & (M + X)^{\cal J} \ar{d} \\ U^{\cal J} \ar{r} & M\end{tikzcd}\] is homotopy cartesian, from which we infer the left-hand weak equivalence. By definition, there is a homotopy cartesian square \[\begin{tikzcd}[row sep = small](1 + X)^{\cal J} \ar{d} \ar{r} & {\rm GL}_1^{\cal J}({\Bbb S}^{\cal J}[M]^{\rm f} \vee_f X) \ar{d} \\ U^{\cal J} \ar{r} & {\rm GL}_1^{\cal J}({\Bbb S}^{\cal J}[M]^{\rm f}),\end{tikzcd}\] from which we infer that the map \begin{equation}\label{firstweq}{\rm Map}_{/U^{\cal J}}(W, (1 + X)^{\cal J}) \xrightarrow{\simeq} {\rm Map}_{/{\rm GL}_1^{\cal J}({\Bbb S}^{\cal J}[M]^{\rm f})}(W, {\rm GL}_1^{\cal J}({\Bbb S}^{\cal J}[M]^{\rm f} \vee_f X))\end{equation} is a weak equivalence. 

The functor which assigns to any commutative ${\cal J}$-space monoid $M$ its units $M^{\times}$ is a right adjoint of the inclusion of grouplike commutative ${\cal J}$-space monoids to all commutative ${\cal J}$-space monoids. Applying this to the situation at hand, the fact that $W$ is grouplike implies that the map \begin{equation}\label{secondweq}{\rm Map}_{/{\rm GL}_1^{\cal J}({\Bbb S}^{\cal J}[M]^{\rm f})}(W, {\rm GL}_1^{\cal J}({\Bbb S}^{\cal J}[M]^{\rm f} \vee_f X)) \xrightarrow{\simeq} {\rm Map}_{/\Omega^{\cal J}({\Bbb S}^{\cal J}[M]^{\rm f})}(W, \Omega^{\cal J}({\Bbb S}^{\cal J}[M]^{\rm f} \vee_f X))\end{equation} induced by the inclusion of units is a weak equivalence. Composing the maps (\ref{firstweq}) and (\ref{secondweq}) we obtain the bottom weak equivalence. 

We now recall from Construction \ref{squarezeroconstruction} that the morphism ${\Bbb S}^{\cal J}[(M + X)^{\cal J}] \to {\Bbb S}^{\cal J}[M]^{\rm f} \vee_f X$ arises as the adjoint of a morphism $(M + X)^{\cal J} \to \Omega^{\cal J}({\Bbb S}^{\cal J}[M]^{\rm f} \vee_f X)$, which induces the dashed morphism in the diagram. Hence the upper triangle cut out by the dashed arrow is commutative. Moreover, the morphism \[(1 + X)^{\cal J} \to \Omega^{\cal J}({\Bbb S}^{\cal J}[M]^{\rm f} \vee_f X)\] factors through $(M + X)^{\cal J}$ by construction, and so the lower triangle is commutative as well.
\end{proof}

\subsection{The transitivity sequence for log ${\rm TAQ}$} Logarithmic ${\rm TAQ}$ enjoys the following transitivity sequence:

\begin{proposition}\label{logtransitivity} Let $(R, P) \xrightarrow{(f, f^\flat)} (A, M) \xrightarrow{(g, g^\flat)} (B, N)$ be cofibrations of cofibrant pre-logarithmic ring spectra. There is a homotopy cofiber sequence \[B \wedge_A {\rm TAQ}^{(R, P)}(A, M) \xrightarrow{} {\rm TAQ}^{(R, P)}(B, N) \xrightarrow{} {\rm TAQ}^{(A, M)}(B, N)\] of $B$-module spectra.
\end{proposition}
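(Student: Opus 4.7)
The plan is to mirror the proof of Proposition \ref{prop:transitivity}: establish the analogous homotopy fiber sequence of logarithmic derivation spaces for any fibrant $B$-module $X$, and then dualize via corepresentability.

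First, I would show that for any fibrant $B$-module $X$ the square
\[\begin{tikzcd}[row sep=small] (A \vee_f X, (M + X)^{\cal J}) \ar{r} \ar{d} & (B \vee_f X, (N + X)^{\cal J}) \ar{d} \\ (A, M) \ar{r} & (B, N) \end{tikzcd}\]
is homotopy cartesian in pre-logarithmic ring spectra. On underlying ring spectra this is essentially formal from Definition \ref{squarezero}, since the augmentation ideals of $A \vee_f X$ and of $B \vee_f X$ both model $X$. For the log structures, the commutative ${\cal J}$-space monoid $(1 + X)^{\cal J}$ appearing in Construction \ref{squarezeroconstruction} depends on the underlying spectrum of $X$ only up to canonical ${\cal J}$-equivalence, so $(M + X)^{\cal J}$ and $(N + X)^{\cal J}$ are weakly equivalent to positive ${\cal J}$-fibrant replacements of $M \boxtimes (1 + X)^{\cal J} \to M$ and $N \boxtimes (1 + X)^{\cal J} \to N$ with a common middle factor. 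The desired pullback property then reduces to showing that the square
\[\begin{tikzcd}[row sep=small] M \boxtimes (1 + X)^{\cal J} \ar{r} \ar{d} & N \boxtimes (1 + X)^{\cal J} \ar{d} \\ M \ar{r} & N \end{tikzcd}\]
is homotopy cartesian, which follows from Lemma \ref{monstr} combined with the characterization of homotopy cartesian squares by Bousfield--Kan homotopy colimits over ${\cal J}$.

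This pullback square translates into a natural weak equivalence
\[\mathrm{Map}_{{\cal P}_{(R, P) // (B, N)}}((A, M), (B \vee_f X, (N + X)^{\cal J})) \simeq \mathrm{Der}_{(R, P)}((A, M), X).\]
Pairing this with the canonical fiber sequence in the pointed model category ${\cal P}_{(R, P) // (B, N)}$ coming from the cofibration $(A, M) \to (B, N)$ and the fibrant target $(B \vee_f X, (N + X)^{\cal J})$, we obtain a homotopy fiber sequence
\[\mathrm{Der}_{(A, M)}((B, N), X) \to \mathrm{Der}_{(R, P)}((B, N), X) \to \mathrm{Der}_{(R, P)}((A, M), X)\]
natural in fibrant $B$-modules $X$.

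Finally, by Theorem \ref{thm:taqcorepresents} the first two terms are corepresented by the $B$-modules $\mathrm{TAQ}^{(A, M)}(B, N)$ and $\mathrm{TAQ}^{(R, P)}(B, N)$, while the third is corepresented (via extension of scalars along $A \to B$, in the same manner as Lemma \ref{lem:changeofrings}) by $B \wedge_A \mathrm{TAQ}^{(R, P)}(A, M)$. Applying Yoneda in the homotopy category of $B$-modules to the natural fiber sequence of mapping spaces then produces the desired homotopy cofiber sequence. The main obstacle I anticipate is the first step, namely the careful verification of the homotopy cartesian square: reconciling the dependence of $(1 + X)^{\cal J}$ on the ambient ring spectrum and checking that the functorial fibrant replacement used to construct $(M + X)^{\cal J}$ and $(N + X)^{\cal J}$ behaves well with respect to base change along $(A, M) \to (B, N)$. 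Once this square is in hand, the remainder of the argument is a direct analogue of the standard proof for ordinary TAQ.
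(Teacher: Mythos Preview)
Your proposal is correct and follows essentially the same approach as the paper: establish a homotopy fiber sequence of logarithmic derivation spaces by means of the homotopy cartesian square relating the square--zero extensions over $(A,M)$ and $(B,N)$, then dualize via corepresentability. The only difference is packaging: the paper splits the derivation space into its ring--spectrum and ${\cal J}$--space components (via the pullback description (\ref{logderpullback})), invokes ordinary TAQ transitivity for the former, and checks only the ${\cal J}$--space square $(M+X)^{\cal J} \to (N+X)^{\cal J}$ over $M \to N$ is homotopy cartesian; you instead verify the full pre--log square at once, which amounts to the same content but avoids appealing separately to Proposition~\ref{prop:transitivity}.
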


The following argument is effectively that given in \cite[Proposition 11.28]{Rog09}.

\begin{proof} We claim that there is a homotopy fiber sequence \[{\rm Der}_{(R, P)}((A, M), X) \xleftarrow{} {\rm Der}_{(R, P)}((B, N), X) \xleftarrow{} {\rm Der}_{(A, M)}((B, N), X)\] for any fibrant $B$-module $X$, from which the result will follow by considering the corresponding cofiber sequence of corepresenting objects. By definition, the space of logarithmic derivations arises as a homotopy pullback in which one leg is a map of spaces corepresented by appropriate ${\rm TAQ}$-terms. Since we know that ordinary ${\rm TAQ}$ satisfies transitivity by Proposition \ref{prop:transitivity}, it suffices to show that the sequence \[\begin{tikzcd}[column sep = small]{\rm Map}^{P/}_{/M}(M, (M \!+\! X)^{\cal J}) & {\rm Map}^{P/}_{/N}(N, (N \!+\! X)^{\cal J}) \ar{l} & {\rm Map}^{M/}_{/N}(N, (N \!+\! X)^{\cal J}) \ar{l}\end{tikzcd}\] relating the involved mapping spaces of commutative ${\cal J}$-space monoids is a homotopy fiber sequence for any fibrant $B$-module $X$. Since the square \[\begin{tikzcd}[row sep = small](M + X)^{\cal J} \ar{r} \ar{d} & (N + X)^{\cal J} \ar{d} \\ M \ar{r} & N\end{tikzcd}\] is homotopy cartesian, there is a natural weak equivalence \[{\rm Map}_{{\cal C}{\cal S}^{\cal J}_{P // M}}(M, (M + X)^{\cal J}) \simeq {\rm Map}_{{\cal C}{\cal S}^{\cal J}_{P // N}}(M, (N + X)^{\cal J}).\] It follows that the sequence in question is a homotopy fiber sequence. \end{proof}

\subsection{Logification invariance of log ${\rm TAQ}$} It is proved in \cite[Corollary 6.7]{Sag14} that the version of log ${\rm TAQ}$ studied in \emph{loc.\ cit.}\ is logification invariant. As this construction is naturally weakly equivalent to ours, we obtain the following:

\begin{proposition}\label{prop:logtaqlogi} Let $(R, P) \to (A, M)$ be a cofibration of cofibrant pre-logarithmic ring spectra. The logification construction induces weak equivalences \[{\rm TAQ}^{(R, P)}(A, M) \xrightarrow{\simeq} {\rm TAQ}^{(R, P)}(A, M^a) \xrightarrow{\simeq} {\rm TAQ}^{(R, P^a)}(A, M^a)\] of $A$-modules. \qed
\end{proposition}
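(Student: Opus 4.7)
The plan is to reduce to the analogous statement for the version $\widetilde{{\rm TAQ}}{}^{(R, P)}(A, M)$ of log TAQ studied in \cite{Sag14}, for which logification invariance is already known as \cite[Corollary 6.7]{Sag14}. The tool for this reduction is Theorem \ref{thm:taqcorepresents}, which provides a natural weak equivalence between the two models of log TAQ.

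Concretely, I would apply Theorem \ref{thm:taqcorepresents} to each of $(A,M)$ over $(R,P)$, $(A, M^a)$ over $(R, P)$, and $(A, M^a)$ over $(R, P^a)$, producing three vertical weak equivalences that assemble together with the logification maps into a diagram of the form
\[
\begin{tikzcd}[row sep=small]
{\rm TAQ}^{(R,P)}(A,M) \ar{r} \ar{d}{\simeq} & {\rm TAQ}^{(R,P)}(A,M^a) \ar{r} \ar{d}{\simeq} & {\rm TAQ}^{(R,P^a)}(A,M^a) \ar{d}{\simeq} \\
\widetilde{{\rm TAQ}}{}^{(R,P)}(A,M) \ar{r}{\simeq} & \widetilde{{\rm TAQ}}{}^{(R,P)}(A,M^a) \ar{r}{\simeq} & \widetilde{{\rm TAQ}}{}^{(R,P^a)}(A,M^a).
\end{tikzcd}
\]
The bottom horizontal arrows are weak equivalences by \cite[Corollary 6.7]{Sag14}, and the vertical arrows are weak equivalences by Theorem \ref{thm:taqcorepresents}. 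Once commutativity of the two squares is established, the two-out-of-three property forces the top horizontal arrows to be weak equivalences as well, which is the content of the proposition.

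The only subtle point, and the main obstacle, is verifying that the identification of Theorem \ref{thm:taqcorepresents} is sufficiently natural in the morphism $(R, P) \to (A, M)$ for the two squares to commute. Inspection of the zig-zag (\ref{bestdiagram}) used to prove Theorem \ref{thm:taqcorepresents} shows that every step is built from manifestly functorial ingredients: change-of-scalars along structure maps of pre-log ring spectra, the $({\Bbb S}^{\cal J}, \Omega^{\cal J})$-adjunction, the functorial factorizations defining $(-)^{\rm gp}$ and $(-)^{\rm rep}$, the auxiliary construction $W(-)$ of Definition \ref{w}, and the corepresenting property of ${\rm taq}^A$ from Proposition \ref{prop:taqcorepresents}. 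Since both the logification map $(A, M) \to (A, M^a)$ and the induced map $(R, P) \to (R, P^a)$ are morphisms of pre-log ring spectra, each ingredient in the zig-zag respects them, and the naturality required for commutativity of the diagram above follows. This completes the reduction to \cite[Corollary 6.7]{Sag14}.
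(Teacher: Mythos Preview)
Your proposal is correct and follows exactly the paper's approach: the paper simply invokes the natural weak equivalence of Theorem~\ref{thm:taqcorepresents} together with \cite[Corollary 6.7]{Sag14}, and you have spelled out this reduction in detail, including the naturality check needed for the two-out-of-three argument.
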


\section{The log \'etale descent formula}\label{lastsection} We prove Theorem \ref{mainthm} in a series of propositions, each of which we motivate with the analogous result for ordinary ${\rm THH}$.

\subsection{Log \'etale descent implies formally log ${\rm THH}$-\'etale}\label{etaledescentimpliesthhetale} Suppose $R \to A$ is a cofibration of cofibrant commutative symmetric ring spectra which satisfies \'etale descent, that is, the natural map \[A \wedge_R {\rm THH}(R) \xrightarrow{\simeq} {\rm THH}(A)\] is a stable equivalence. Then there are stable equivalences \[A \xrightarrow{\cong} R \wedge_{{\rm THH}(R)} ({\rm THH}(R) \wedge_R A) \xrightarrow{\simeq} R \wedge_{{\rm THH}(R)} {\rm THH}(A) \xrightarrow{\simeq} {\rm THH}^R(A),\] so that $R \to A$ is formally ${\rm THH}$-\'etale. Here the last isomorphism is Proposition \ref{thhsuspension}, while the second stable equivalence is due to \'etale descent.

\begin{proposition}\label{logetaledescentimplieslogthhetale} Any cofibration $(R, P) \xrightarrow{} (A, M)$ of cofibrant pre-logarithmic ring spectra satisfying log \'etale descent is formally log ${\rm THH}$-\'etale.
\end{proposition}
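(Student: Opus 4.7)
The plan is to mimic the non-logarithmic argument given immediately before the proposition in Section \ref{etaledescentimpliesthhetale}, with Lemma \ref{logthhjuggling} playing the role of the isomorphism $R \wedge_{\THH(R)} \THH(A) \cong \THH^R(A)$ from Proposition \ref{thhsuspension}. Assuming $A \wedge_R \THH(R, P) \xrightarrow{\simeq} \THH(A, M)$, I will produce a chain of stable equivalences
\[
A \xrightarrow{\simeq} R \wedge_{\THH(R, P)} (A \wedge_R \THH(R, P)) \xrightarrow{\simeq} R \wedge_{\THH(R, P)} \THH(A, M)^{\rm c} \xrightarrow{\simeq} \THH^{(R, P)}(A, M)
\]
whose composite is the unit map.

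The three equivalences come from, respectively: (i) rewriting $A$ via the commutation of pushouts
\[
R \wedge_{\THH(R, P)} (A \wedge_R \THH(R, P)) \cong A \wedge_R (R \wedge_{\THH(R, P)} \THH(R, P)) \cong A \wedge_R R \cong A,
\]
(ii) applying the homotopy invariant functor $R \wedge_{\THH(R, P)} -$ to the log étale descent equivalence, on suitably cofibrant models; (iii) the zigzag furnished by Lemma \ref{logthhjuggling}, which identifies the relative log $\THH$ with the indicated smash product up to stable equivalence. Passing from $A \wedge_R \THH(R, P)$ to a cofibrant replacement under $\THH(R, P)$ and then to $\THH(A, M)^{\rm c}$ is where most of the technical bookkeeping lives.

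The main obstacle will not be any substantial homotopical input but rather ensuring that the cofibrancy conditions are in place so that all the smash products appearing above compute honest homotopy pushouts, and that the various replacements can be arranged compatibly so that the final composite is identified with the unit map $A \to \THH^{(R, P)}(A, M)$ rather than merely some abstract stable equivalence. Given that $(R, P) \to (A, M)$ is a cofibration of cofibrant pre--log ring spectra and that the juggling formula of Lemma \ref{logthhjuggling} is already stated with explicit cofibrant replacements, these verifications should be routine once the cofibrant model $\THH(A, M)^{\rm c}$ is threaded consistently through the diagram.
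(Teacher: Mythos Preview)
Your proposal is correct and follows essentially the same route as the paper: the paper also writes the chain
\[
A \xrightarrow{\cong} R \wedge_{{\rm THH}(R, P)} ({\rm THH}(R, P) \wedge_{R} A) \xrightarrow{\simeq} R \wedge_{{\rm THH}(R, P)} {\rm THH}(A, M)^{\rm c} \simeq {\rm THH}^{(R, P)}(A, M)
\]
using log \'etale descent for the middle map and Lemma~\ref{logthhjuggling} for the last zigzag, and then invokes two--out--of--three after observing that all maps are under $A$ (which is exactly your concern about identifying the composite with the unit map).
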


\begin{proof} The argument is analogous to that of the classical case: there are equivalences \begin{align*}A    \xrightarrow{\cong}   R \wedge_{{\rm THH}(R, P)}  ({\rm THH}(R, P)  \wedge_{R} A)  & \xrightarrow{\simeq} R \wedge_{{\rm THH}(R, P)} {\rm THH}(A, M)^{\rm c} \\ &  \xrightarrow{\simeq} {\rm THH}^{(R, P)}(A, M)^{\rm c} \xleftarrow{\simeq} {\rm THH}^{(R, P)}(A, M),\end{align*} where the second stable equivalence is due to log \'etale descent and the following chain is Lemma~\ref{logthhjuggling}. As all maps in the chain are under $A$, the result follows by the two-out-of-three property.
\end{proof}

\subsection{Formally log ${\rm THH}$-\'etale implies formally log ${\rm TAQ}$-\'etale}\label{thhetaleimpliestaqetale}  The key observation in relating log ${\rm TAQ}$ and log ${\rm THH}$ is the following:

\begin{lemma}\label{taqoflogthh} The $A$-modules \[{\rm taq}^A({\rm THH}^{(R, P)}(A, M)) \text{ and } \Sigma{\rm TAQ}^{(R, P)}(A, M)\] are naturally weakly equivalent. 
\end{lemma}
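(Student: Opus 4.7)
The plan is to combine the description of logarithmic \(\mathrm{THH}\) as a suspension in augmented commutative \(A\)-algebras (Proposition \ref{logthhsuspension}) with the Basterra--Mandell stabilization (Theorem \ref{basterramandell}), which identifies \(\mathrm{taq}^A\) with a suspension spectrum functor and therefore sends the unstable suspension \(S^1 \odot_A (-)\) to the stable suspension \(\Sigma\).

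More concretely, write
\[
C = (A \wedge_R A) \wedge_{\mathbb{S}^{\mathcal{J}}[M \boxtimes_P M]} \mathbb{S}^{\mathcal{J}}[(M \boxtimes_P M)^{\rm rep}],
\]
so that by definition \(\mathrm{TAQ}^{(R,P)}(A, M) = \mathrm{taq}^A(C)\) and, by Proposition \ref{logthhsuspension}, there is a natural chain of stable equivalences
\[
\mathrm{THH}^{(R,P)}(A, M) \simeq S^1 \odot_A C
\]
of augmented commutative \(A\)-algebras. Since \(\mathrm{taq}^A\) is a homotopy functor, applying it to this chain gives \(\mathrm{taq}^A(\mathrm{THH}^{(R,P)}(A, M)) \simeq \mathrm{taq}^A(S^1 \odot_A C)\).

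It therefore suffices to produce a natural weak equivalence \(\mathrm{taq}^A(S^1 \odot_A C) \simeq \Sigma\,\mathrm{taq}^A(C)\). For this I would invoke the Quillen equivalences \eqref{basterramandelleq} of Basterra--Mandell, which identify the stable homotopy category \(\mathrm{Ho}\,\mathrm{Sp}(\mathcal{C}\mathrm{Sp}^{\Sigma}_{A//A})\) with \(\mathrm{Ho}\,\mathrm{Mod}_A\) in a manner sending the derived suspension spectrum of an augmented commutative \(A\)-algebra \(B\) to \(\mathrm{taq}^A(B)\). Since \(S^1 \odot_A (-)\) models the suspension functor in the pointed model category \(\mathcal{C}\mathrm{Sp}^{\Sigma}_{A//A}\), its image under the stabilization functor \(\Sigma^{\infty}\) is the suspension in the stable category; any equivalence of stable homotopy categories commutes with \(\Sigma\), so the composite equivalence to \(\mathrm{Mod}_A\) yields
\[
\mathrm{taq}^A(S^1 \odot_A C) \simeq \Sigma\,\mathrm{taq}^A(C).
\]
Naturality is automatic from naturality of each ingredient.

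The only step that requires a moment's thought is verifying that the functor \(\mathrm{taq}^A\) really does transport the \emph{unstable} suspension \(S^1 \odot_A (-)\) to the suspension in \(\mathrm{Mod}_A\); this is a purely formal consequence of Theorem \ref{basterramandell} together with the fact that \(\Sigma^{\infty}\) into the stabilization of any left proper cellular pointed model category intertwines the two suspensions up to natural weak equivalence, so no genuine obstacle arises.
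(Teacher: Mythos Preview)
Your proposal is correct and follows essentially the same route as the paper: use Proposition~\ref{logthhsuspension} to identify ${\rm THH}^{(R,P)}(A,M)$ with $S^1 \odot_A C$, and then argue that ${\rm taq}^A$ sends this unstable suspension to $\Sigma\,{\rm taq}^A(C)$. The only cosmetic difference is in how the last step is justified: the paper observes directly that ${\rm taq}^A = Q_A^{\Bbb L} \circ I_A^{\Bbb R}$ is the composite of a Quillen equivalence and a left Quillen functor, each of which commutes with suspension, whereas you route the same fact through the stabilization picture of Theorem~\ref{basterramandell}.
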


\begin{proof} Propostion \ref{logthhsuspension} realizes ${\rm THH}^{(R, P)}(A, M)$ as the suspension of the commutative augmented $A$-algebra \[(A \wedge_R A) \wedge_{{\Bbb S}^{\cal J}[M \boxtimes_P M]} {\Bbb S}^{\cal J}[(M \boxtimes_P M)^{\rm rep}].\] By definition, the functor ${\rm taq}^A(-)$ is the composite of a Quillen equivalence and a left Quillen functor, both of which commute with suspensions. This means that the $A$-modules \[{\rm taq}^A({\rm THH}^{(R, P)}(A, M)) \text{ and } \Sigma {\rm taq}^A((A \wedge_R A) \wedge_{{\Bbb S}^{\cal J}[M \boxtimes_P M]} {\Bbb S}^{\cal J}[(M \boxtimes_P M)^{\rm rep}])\] are naturally weakly equivalent. 
\end{proof}

\begin{proposition}\label{logthhetaleimplieslogtaqetale} Let $(R, P) \xrightarrow{(f, f^{\flat})} (A, M)$ be a formally log ${\rm THH}$-\'etale morphism. Then $(f, f^{\flat})$ is also formally log ${\rm TAQ}$-\'etale.
\end{proposition}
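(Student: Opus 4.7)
The plan is to mimic the classical argument outlined just before the statement, using Proposition \ref{logthhsuspension} in place of the isomorphism ${\rm THH}^R(A) \cong S^1 \odot_A (A \wedge_R A)$ of Proposition \ref{thhsuspension}, and Definition \ref{def:logtaq} in place of the definition ${\rm TAQ}^R(A) = {\rm taq}^A(A \wedge_R A)$. Concretely, write $C := (A \wedge_R A) \wedge_{\mathbb{S}^{\cal J}[M \boxtimes_P M]} \mathbb{S}^{\cal J}[(M \boxtimes_P M)^{\rm rep}]$ for the augmented commutative $A$-algebra appearing in Definition \ref{def:logtaq}, so that ${\rm TAQ}^{(R,P)}(A, M) = {\rm taq}^A(C)$ by definition.

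First, I would unpack the formally log ${\rm THH}$-\'etale hypothesis: the unit map $A \to {\rm THH}^{(R,P)}(A, M)$ being a stable equivalence means precisely that ${\rm THH}^{(R,P)}(A,M)$ is weakly equivalent to $A$, the zero object, in the pointed category ${\cal C}{\rm Sp}^{\Sigma}_{A//A}$ of augmented commutative $A$-algebras. Next, Proposition \ref{logthhsuspension} provides a chain of stable equivalences of augmented commutative $A$-algebras between ${\rm THH}^{(R,P)}(A,M)$ and $S^1 \odot_A C$. Combining these two facts, the suspension $S^1 \odot_A C$ is weakly equivalent to the zero object in ${\cal C}{\rm Sp}^{\Sigma}_{A // A}$.

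Iterating the pointed simplicial tensor (which commutes with the appropriate homotopy pushouts), this gives $S^n \odot_A C \simeq *$ in ${\cal C}{\rm Sp}^{\Sigma}_{A//A}$ for all $n \geq 1$, and hence the suspension spectrum $\Sigma^{\infty} C = \{S^n \odot_A C\}_{n \geq 0}$ is stably trivial in the stabilization ${\rm Sp}({\cal C}{\rm Sp}^{\Sigma}_{A//A})$. By the Basterra--Mandell Quillen equivalence (Theorem \ref{basterramandell}), the suspension spectrum $\Sigma^{\infty} C$ corresponds under the equivalence of homotopy categories to ${\rm taq}^A(C)$. Therefore ${\rm TAQ}^{(R,P)}(A,M) = {\rm taq}^A(C)$ is contractible as an $A$-module, which is the definition of formally log ${\rm TAQ}$-\'etale.

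The argument is largely formal once Proposition \ref{logthhsuspension} and Definition \ref{def:logtaq} are in place; the only mild subtlety I expect is making sure the cofibrancy hypotheses on $(R, P) \to (A, M)$ (and the induced cofibrancies produced in the proof of Proposition \ref{logthhsuspension}) are sufficient to guarantee that all of the above equivalences are genuine weak equivalences in the derived sense, and that the identification of $\Sigma^\infty C$ with ${\rm taq}^A(C)$ under Basterra--Mandell is compatible with the suspension model produced by the pointed tensor with $S^1$ in ${\cal C}{\rm Sp}^{\Sigma}_{A//A}$. Both are essentially bookkeeping points that the non-logarithmic template of \cite[Lemma 9.4.4]{Rog08} already handles.
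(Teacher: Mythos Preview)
Your proposal is correct and follows essentially the same approach as the paper's own proof: both unpack the formally log ${\rm THH}$-\'etale hypothesis via Proposition \ref{logthhsuspension} to see that $S^1 \odot_A C$ is the zero object in ${\cal C}{\rm Sp}^{\Sigma}_{A//A}$, infer that the suspension spectrum $\Sigma^{\infty}C$ is stably trivial, and then invoke Theorem \ref{basterramandell} to conclude that ${\rm taq}^A(C) = {\rm TAQ}^{(R, P)}(A, M)$ is contractible. The paper's proof is slightly terser but otherwise identical in structure and content.
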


\begin{proof}  By assumption, the unit map $A \xrightarrow{\simeq} {\rm THH}^{(R, P)}(A, M)$ is a stable equivalence. The result follows from applying ${\rm taq}^A(-)$ to this map and from Lemma \ref{taqoflogthh}. \qedhere
\end{proof}

\subsection{Formally log ${\rm TAQ}$-\'etale implies log \'etale descent} We finally discuss the log \'etale descent formula under the hypothesis of contractible log ${\rm TAQ}$. As we noted in the introduction, this fails already for ordinary ${\rm THH}$ unless one adds connectivity hypotheses. In the setting of pre-logarithmic ring spectra, we employ the following notion of connectivity:

\begin{definition}\label{logconnective} A pre-logarithmic ring spectrum $(A, M)$ is \emph{connective} if both the underlying commutative symmetric ring spectrum $A$ and ${\Bbb S}^{\cal J}[M]$ are connective. 
\end{definition}

For example, the pre-logarithmic ring spectra $(A, D(x))$ discussed in Example \ref{ex:prelogstr}(5) are connective. Our definition of connectivity is made so that the full strength of the following result, which is a reformulation of \cite[Theorem 6.10]{Kuh06}, will be applicable:

\begin{theorem}\label{kuhnanalysis} Let $B \to C \to B$ be an augmented commutative $B$-algebra. There is a natural tower of fibrations \[\cdots \to P_{B, 2}(C) \to P_{B, 1}(C) \to P_{B, 0}(C)\] of augmented commutative $B$-algebras satisying the following properties:

\begin{enumerate}
\item There is a weak equivalence $P_{B, 0}(C) \simeq B$, under which the map $C \to P_{B, 0}(C)$ corresponds to the augmentation map $C \to B$. 
\item The fiber of the fibration $P_{B, n}(C) \to P_{B, n - 1}(C)$ is weakly equivalent to the extended powers \[[\bigwedge_B^n {\rm taq}^B(C)]_{h\Sigma_n}\] as $B$-modules.
\item If $I_B^{\Bbb R}(C)$ is $0$-connected, then the map $C \to P_{B, n}(C)$ is $n$-connected. 
\end{enumerate}
\end{theorem}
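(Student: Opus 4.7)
The plan is to derive the tower from the Goodwillie tower of the identity functor on non-unital commutative $B$-algebras, as constructed in \cite[Theorem 6.10]{Kuh06}, and transport it across the Quillen equivalence $B \vee - \dashv I_B$ recorded in (\ref{basterraadj}). First I would set $N := I_B^{\Bbb R}(C)$, so that $C$ corresponds to $N$ under this Quillen equivalence. Kuhn's theorem then provides a natural tower of fibrations $\cdots \to P_n(N) \to P_{n-1}(N) \to \cdots \to P_0(N) \simeq 0$ in ${\rm Nuca}_B$ under $N$, whose fiber at stage $n$ is the extended power $[\bigwedge_B^n Q_B(N)]_{h\Sigma_n}$, where $Q_B$ is the indecomposables functor of (\ref{basterraadj}). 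I would define $P_{B, n}(C)$ as a functorial fibrant replacement under $C$ of $B \vee P_n(N)$.

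Next I would verify the three properties. Property (1) is immediate: $P_0(N) \simeq 0$ is sent by $B \vee -$ to the zero object $B$ in ${\cal C}{\rm Sp}^{\Sigma}_{B // B}$, and under the correspondence the map $C \to P_{B, 0}(C)$ identifies with the augmentation $C \to B$ because this is the counit of the $(B \vee -, I_B)$-adjunction. For property (2), the right Quillen equivalence $B \vee -$ preserves fibration sequences, so the fiber of $P_{B, n}(C) \to P_{B, n-1}(C)$ is computed by applying $B \vee -$ to $[\bigwedge_B^n Q_B(N)]_{h\Sigma_n}$, regarded as a trivial non-unital commutative $B$-algebra. Since $Q_B(I_B^{\Bbb R}(C)) \simeq {\rm taq}^B(C)$ by Definition \ref{def:taq}, the underlying $B$-module of the fiber is $[\bigwedge_B^n {\rm taq}^B(C)]_{h\Sigma_n}$ as required.

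For property (3), I would invoke the standard convergence estimate for Goodwillie towers. If $N$ is $0$-connected then so is $Q_B(N)$, since $Q_B$ is a left Quillen functor that preserves connectivity, and consequently the $n$-fold smash product over the connective commutative ring spectrum $B$ is at least $(n-1)$-connected; taking homotopy orbits does not decrease connectivity, so the $n$-th layer is at least $n$-connected. Induction up the tower of fibrations then shows that $N \to P_n(N)$ is $n$-connected in ${\rm Nuca}_B$, and so $C \to P_{B, n}(C)$ is $n$-connected after applying $B \vee -$.

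The main obstacle will be ensuring that the model-categorical setup used by Kuhn matches the conventions adopted here (positive stable model structure on ${\rm Sp}^{\Sigma}$, with its induced structures on ${\cal C}{\rm Sp}^{\Sigma}_{B // B}$ and ${\rm Nuca}_B$), and that his identification of the $n$-th Goodwillie layer of the identity functor on ${\rm Nuca}_B$ with the extended power of $Q_B$ is natural in $B$. Both points are handled by the Basterra--Mandell machinery recalled in Theorem \ref{basterramandell}, which provides the compatibility between the Goodwillie tower of the identity functor on augmented commutative $B$-algebras and the stabilization computing ${\rm taq}^B$; once this framework is in place, the verification is formal.
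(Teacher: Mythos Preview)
Your overall strategy—invoke Kuhn's tower and then match his description of the fibers with ${\rm taq}^B(C)$—is the same as the paper's. But the way you have stated what Kuhn proves hides the one non-formal step, and your argument for property (3) does not work as written.

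\textbf{On the fibers.} You assert that Kuhn's Theorem 3.10 describes the $n$-th fiber as the extended power of $Q_B(N)$, i.e.\ of the indecomposables of $N = I_B^{\Bbb R}(C)$. That is not what Kuhn proves. His layers are extended powers of the $B$-module \emph{underlying} the non-unital algebra
\[
{\rm taq}_B(C) \;=\; \hocolim_n \Omega^n\bigl(S^n \odot I_B^{\Bbb R}(C)\bigr),
\]
the stabilization of $I_B^{\Bbb R}(C)$ in ${\rm Nuca}_B$. The entire content of the proof is the identification of this $B$-module with ${\rm taq}^B(C) = Q_B^{\Bbb L} I_B^{\Bbb R}(C)$. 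This is exactly \cite[Proposition~3.8]{BM05}, which says that on ${\rm Sp}({\rm Nuca}_B)$ the levelwise forgetful functor and the levelwise indecomposables functor to ${\rm Sp}({\rm Mod}_B) \simeq {\rm Mod}_B$ agree up to natural weak equivalence. Your final paragraph gestures at Basterra--Mandell, but only via Theorem~\ref{basterramandell}; you need the more specific Proposition~3.8, and you should present it as the point of the argument rather than a technicality.

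\textbf{On property (3).} Your induction is circular. Knowing that the $k$-th layer is highly connected for $k>n$ only gives that $P_m(N) \to P_n(N)$ is $n$-connected for large $m$; to conclude that $N \to P_n(N)$ is $n$-connected you would need $N \simeq \mathrm{holim}_m P_m(N)$, which is precisely the convergence statement you are trying to establish. In the paper's proof this is avoided entirely: Kuhn already proves (3) as part of \cite[Theorem~3.10]{Kuh06}, and one simply cites it.

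\textbf{A minor slip.} You call $B \vee -$ the ``right Quillen equivalence''; it is the \emph{left} adjoint in (\ref{basterraadj}). This does not affect the homotopy fiber sequence claim, but it does mean you cannot literally say it preserves fibrations. (The paper sidesteps this by quoting Kuhn's tower directly in ${\cal C}{\rm Sp}^{\Sigma}_{B//B}$ rather than transporting from ${\rm Nuca}_B$.)
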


For example, the above result applies for the augmented commutative symmetric ring spectrum \[A \to {\rm THH}(A) \to A\] for $A$ connective. In this case, the augmentation map is an isomorphism on $\pi_0$ and, having a section, a surjection on $\pi_1$. Hence the above theorem applies to describe ${\rm THH}(A)$ as the homotopy limit of the tower $\{P_{A, n}({\rm THH}(A))\}$. Moreover, we can describe the homotopy fibers of the maps in the tower: applying Lemma \ref{taqoflogthh} in the setting of ordinary ${\rm THH}$ gives that these are merely extended powers of a suspension of ${\rm TAQ}(A)$. We will show that, under the connectivity hypothesis described in Definition \ref{logconnective}, we may draw similar conclusions about log ${\rm THH}$. 

The above formulation of Theorem \ref{kuhnanalysis} differs from the one given by Kuhn, as he employs a different formulation of topological Andr\'e--Quillen homology. The equivalence between the various notions is already alluded to in \emph{loc.\ cit.}, and is by now a well-known consequence of the work of Basterra and Mandell \cite{BM05}. As the present work is highly dependent upon the above formulation, we provide a proof explaining how one may pass between the two different setups. The author first learned of the below line of argument from a discussion on MathOverflow between Yonatan Harpaz and Bruno Stonek.\footnote{ \url{https://mathoverflow.net/questions/316418/}} 

\begin{proof}[Proof of Theorem \ref{kuhnanalysis}] By \cite[Theorems 3 and 4]{BM05}, the stabilization of the adjunctions (\ref{basterraadj}) gives rise to a chain of Quillen equivalences \begin{equation}\label{basterramandelleq}\begin{tikzcd}{\rm Mod}_B \ar[shift left = 1]{r}{\Sigma^{\infty}} & {\rm Sp}(\Mod_B) \ar[shift left = 1]{l} \ar[shift right = 1]{r} & \ar[shift right = 1,swap]{l}{Q_B} \vspace{10 mm} {\rm Sp}({\rm Nuca}_B) \ar[shift left = 1]{r}{B \vee -} &  \ar[shift left = 1]{l}{I_B} \vspace{10 mm} {\rm Sp}({\cal C}{\rm Sp}^{\Sigma}_{B // B}).\end{tikzcd}\end{equation} By \cite[Theorem 3.10]{Kuh06}, there is a tower of fibrations of commutative augmented $B$-algebras satisfying properties (1) and (3). Moreover, the fibers of the maps in the tower are described as extended powers of the $B$-module underlying the non-unital commutative $B$-algebra  \[{\rm taq}_B(C) := {\rm hocolim}_n \Omega^n(S^n \odot I^{\Bbb R}_B(C)),\] where the tensor $S^n \odot -$ is taken in the pointed model category of non-unital commutative $B$-algebras. This is the $0$th level of an $\Omega$-spectrum replacement of the suspension spectrum $\{S^n \odot I^{\Bbb R}_B(C)\}$ in the stable category ${\rm Sp}({\rm Nuca}_B)$. By \cite[Proposition 3.8]{BM05}, it makes homotopically no difference whether one applies the levelwise indecomposables $Q_B$ or the levelwise forgetful functor ${\rm Sp}({\rm Nuca}_B) \to {\rm Sp}({\rm Mod}_A) \simeq {\rm Mod}_A$. In particular, ${\rm taq}^B(C)$ is naturally weakly equivalent to the $B$-module underlying ${\rm taq}_B(C)$, which concludes the proof.
\end{proof}

\begin{remark} Since the augmentation ideal functor $I_B$ is the right adjoint in a Quillen equivalence, we also have the stabilization formula \[{\rm taq}^B(C) \simeq {\rm hocolim}_n \Omega^n(I^{\Bbb R}_B(S^n \odot_B C)),\] involving instead the pointed tensor in augmented commutative $B$-algebras. Setting $B = A$ and $C = (A \wedge_R A) \wedge_{{\Bbb S}^{\cal J}[M \boxtimes_P M]} {\Bbb S}^{\cal J}[(M \boxtimes_P M)^{\rm rep}]$, this gives a stabilization formula for log ${\rm TAQ}$. 
\end{remark}

The following result provides the necessary connectivity property for the augmentation ideal of ${\rm THH}^{(R, P)}(A, M) \to A$ for Theorem \ref{kuhnanalysis} to be applicable in the context of log ${\rm THH}$. In its proof we use the following basic consequence of the ${\rm Tor}$-spectral sequence \cite[Theorem IV.4.1]{EKMM}, which is for example spelled out in \cite[Corollary 7.2.1.23]{Lur}: if $R$ is a connective ring spectrum and $X$ and $Y$ are connective $R$-module spectra, then there is a natural isomorphism \begin{equation}\label{pizeroiso}\pi_0(X \wedge^{\Bbb L}_R Y) \cong \pi_0(X) \otimes_{\pi_0(R)} \pi_0(Y).\end{equation} 

\begin{proposition}\label{logaugmentation} Let $(R, P) \to (A, M)$ be a cofibration of cofibrant and connective pre-logarithmic ring spectra. Then the augmentation ${\rm THH}^{(R, P)}(A, M) \to A$ induces an isomorphism $\pi_0{\rm THH}^{(R, P)}(A, M) \xrightarrow{\cong} \pi_0A$ of commutative rings. 
\end{proposition}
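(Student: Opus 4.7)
The plan is to apply the Tor isomorphism indicated in the excerpt to the pushout square of Definition \ref{def:logthh}, compute the $\pi_0$ of each tensor factor separately, and observe that the result collapses to $\pi_0 A$. By Definition \ref{def:logthh}, $\THH^{(R,P)}(A,M)$ is the pushout of $\THH^R(A) \leftarrow \bS^{\cJ}[B^{\cy}_P(M)] \to \bS^{\cJ}[B^{\cy}_P(M)^{\rep}]$, and the augmentation to $A$ is induced by the evident augmentations of the three corners to $A$, $\bS^{\cJ}[M]$, and $\bS^{\cJ}[M]$, respectively.

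The first step is to verify that each of these three commutative ring spectra is connective, so that the cited Tor isomorphism applies to the pushout. For $\THH^R(A)$ this is standard, and for $\bS^{\cJ}[B^{\cy}_P(M)]$ it follows from the isomorphism $\bS^{\cJ}[B^{\cy}_P(M)] \cong B^{\cy}_{\bS^{\cJ}[P]}(\bS^{\cJ}[M])$ (coming from strong symmetric monoidality of $\bS^{\cJ}[-]$) together with connectivity of $\bS^{\cJ}[M]$ and $\bS^{\cJ}[P]$. For the replete term $\bS^{\cJ}[B^{\cy}_P(M)^{\rep}]$, I would apply Lemma \ref{repleterewrite} (with $M$ playing the role of $P$ and $B^{\cy}_P(M)$ playing the role of the augmented object) to decompose $B^{\cy}_P(M)^{\rep} \simeq M \boxtimes W$, where $W$ is the homotopy pullback of $U^{\cJ} \to M^{\gp} \leftarrow B^{\cy}_P(M)^{\gp}$; then $\bS^{\cJ}[B^{\cy}_P(M)^{\rep}] \simeq \bS^{\cJ}[M] \wedge \bS^{\cJ}[W]$, reducing both the connectivity question and the $\pi_0$-computation to those of $\bS^{\cJ}[W]$.

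Granted connectivity, the Tor isomorphism yields
\[
\pi_0\,\THH^{(R,P)}(A,M) \cong \pi_0\,\THH^R(A) \otimes_{\pi_0\,\bS^{\cJ}[B^{\cy}_P(M)]} \pi_0\,\bS^{\cJ}[B^{\cy}_P(M)^{\rep}],
\]
compatibly with the augmentation whose target is $\pi_0 A \otimes_{\pi_0\,\bS^{\cJ}[M]} \pi_0\,\bS^{\cJ}[M] \cong \pi_0 A$. I would then identify each factor: the augmentations $\pi_0\,\THH^R(A) \to \pi_0 A$ and $\pi_0\,\bS^{\cJ}[B^{\cy}_P(M)] \to \pi_0\,\bS^{\cJ}[M]$ are isomorphisms by the standard connective $HH_0$ argument applied to the morphisms $R \to A$ and $\bS^{\cJ}[P] \to \bS^{\cJ}[M]$. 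For the final factor, the decomposition above reduces the identification to showing that $\bS^{\cJ}[W] \to \bS$ is a $\pi_0$-isomorphism, which I would deduce from the triviality $\pi_{0,*}(W) = \{*\}$. This triviality in turn follows from Lemma \ref{chainovergp} (giving $B^{\cy}_P(M)^{\gp} \simeq B^{\cy}_{P^{\gp}}(M^{\gp})$) together with the Hochschild $HH_0$-computation, which shows that the augmentation $B^{\cy}_{P^{\gp}}(M^{\gp}) \to M^{\gp}$ induces an isomorphism on $\pi_{0,*}$ of graded commutative monoids.

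The main obstacle is controlling $\bS^{\cJ}[W]$: one must establish both its connectivity and that its augmentation to $\bS$ is a $\pi_0$-isomorphism. The subtlety is that group completion does not generally preserve spectral connectivity, as $\bS^{\cJ}[M^{\gp}]$ models the periodic localization $A[M^{-1}]$ and is typically non-connective. However, the object $W$ sits strictly inside the identity component of the grouplike $B^{\cy}_P(M)^{\gp}$, and this restriction is what should make $\bS^{\cJ}[W]$ well-behaved. Once this technical ingredient is in place, the three identifications assemble to collapse the tensor product to $\pi_0 A$ under the augmentation, completing the argument.
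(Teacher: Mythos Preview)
Your overall strategy mirrors the paper's: apply the $\pi_0$-Tor isomorphism to the defining pushout and reduce to understanding the replete factor via the splitting of Lemma~\ref{repleterewrite}. The paper takes one extra preliminary step you omit: it first uses Lemma~\ref{logthhjuggling} and the Tor isomorphism to reduce to the \emph{absolute} case $\THH(A,M)$, so that the replete term is $B^{\cy}(M)^{\rep}$ rather than $B^{\cy}_P(M)^{\rep}$. This is not essential, but it sets up the key external input from \cite{RSS18}.

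The genuine gap is in your treatment of $\bS^{\cJ}[W]$. You correctly flag this as the main obstacle, but the proposed resolution---deducing connectivity of $\bS^{\cJ}[W]$ and $\pi_0\bS^{\cJ}[W]\cong\bZ$ from $\pi_{0,*}(W)=\{*\}$---does not go through as stated. Knowing that $W_{h\cJ}$ is connected and concentrated over degree~$0$ does not by itself control $\bS^{\cJ}[W]$: the functor $\bS^{\cJ}[-]$ is sensitive to the full $\cJ$-diagram structure, not merely to $\pi_{0,*}$, and as you yourself observe, objects built from $M^{\gp}$ can produce non-connective spectra. The paper sidesteps this entirely by invoking the explicit identification $V(M)\simeq F^{\cJ}_{(\mathbf{0},\mathbf{0})}(B(M^{\gp}_{h\cJ}))$ of \cite[Proposition~2.4]{RSS18}. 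Since $\bS^{\cJ}\circ F^{\cJ}_{(\mathbf{0},\mathbf{0})}=\Sigma^{\infty}_+$, this immediately yields $\bS^{\cJ}[V(M)]\simeq\Sigma^{\infty}_+ B(M^{\gp}_{h\cJ})$, which is connective with $\pi_0\cong\bZ$ because $B(M^{\gp}_{h\cJ})$ is path-connected. To close your argument you would need an analogous identification of your relative $W$ with a free object in bidegree $(\mathbf{0},\mathbf{0})$, or else first reduce to the absolute case as the paper does and then cite \cite{RSS18} directly.
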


\begin{proof} The chain of stable equivalences \[R \wedge_{{\rm THH}(R, P)} {\rm THH}(A, M)^{\rm c} \xrightarrow{\simeq} {\rm THH}^{(R, P)}(A, M)^{\rm c} \xleftarrow{\simeq} {\rm THH}^{(R, P)}(A, M)\] from Proposition \ref{logthhjuggling} and the isomorphism (\ref{pizeroiso}) allow us to reduce to the case of absolute log ${\rm THH}$. Moreover, the definition of log ${\rm THH}$ as a (derived) balanced smash product \[{\rm THH}(A, M) = {\rm THH}(A) \wedge_{{\Bbb S}^{\cal J}[B^{\rm cy}(M)]} {\Bbb S}^{\cal J}[B^{\rm cy}(M)^{\rm rep}]\] allows us, by another application of the isomorphism (\ref{pizeroiso}), to further reduce to checking that the augmentation \[{\Bbb S}^{\cal J}[B^{\rm cy}(M)^{\rm rep}] \xrightarrow{} {\Bbb S}^{\cal J}[M]\] is an isomorphism on $\pi_0$. We have used here that ${\Bbb S}^{\cal J}[B^{\rm cy}(M)] \cong {\rm THH}({\Bbb S}^{\cal J}[M])$ and that the commutative symmetric ring spectrum ${\Bbb S}^{\cal J}[M]$ is connective.

We study the analysis of the replete bar construction from \cite{RSS15} and \cite{RSS18}. Fix a factorization \[\begin{tikzcd}M \ar[tail]{r}{\simeq} & M' \ar[two heads]{r} & M^{\rm gp}\end{tikzcd}\] in the positive ${\cal J}$-model structure of the group completion $\eta_M \colon M \to M^{\rm gp}$. By \cite[Proposition 3.15]{RSS15}, there is a chain of ${\cal J}$-equivalences over $M'$ relating $B^{\rm cy}(M)^{\rm rep}$ to the (homotopy) pullback of \[M' \xrightarrow{} M^{\rm gp} \xleftarrow{} B^{\rm cy}(M^{\rm gp});\] this homotopy pullback is simply referred to as the \emph{replete bar construction} $B^{\rm rep}(M)$ in \cites{RSS15, RSS18}. By \cite[Proof of Proposition 3.1]{RSS18}, there is a ${\cal J}$-equivalence \[M \boxtimes V(M) \xrightarrow{\simeq} B^{\rm rep}(M)\] over $M'$, where, following \cite{RSS18}, we define $V(M)$ as the (homotopy) pullback of the diagram \[U(M^{\rm gp}) \xrightarrow{} M^{\rm gp} \xleftarrow{} B^{\rm cy}(M^{\rm gp}).\] Here $U(M^{\rm gp})$ appears in a factorization $U^{\cal J} \to U(M^{\rm gp}) \to M^{\rm gp}$ of the initial map to $M^{\rm gp}$ by an acyclic cofibration followed by a positive ${\cal J}$-fibration. 

By construction, the commutative ${\cal J}$-space monoid $M'$ is cofibrant, so that \cite[Corollary 8.8]{RSS15} applies to infer that \[ {\Bbb S}^{\cal J}[M \boxtimes V(M)] \xrightarrow{\simeq} {\Bbb S}^{\cal J}[B^{\rm rep}(M)]\] is a stable equivalence of commutative symmetric ring spectra. We shall argue that the augmentation \[{\Bbb S}^{\cal J}[M \boxtimes V(M)] \xrightarrow{} {\Bbb S}^{\cal J}[M] \wedge {\Bbb S}^{\cal J}[U(M^{\rm gp})] \xleftarrow{\simeq} {\Bbb S}^{\cal J}[M]\] induces an isomorphism on $\pi_0$. By \cite[Proposition 2.4]{RSS18}, there is a chain of equivalences relating $V(M)$ to $F_{({\bf 0, 0})}^{\cal J}(B(M^{\rm gp}_{h{\cal J}}))$; the proof in \emph{loc.\ cit} shows that the weak equivalences involved respect the augmentations to $U(M^{\rm gp})$. In particular, it suffices to prove that the morphism \[{\Bbb S}^{\cal J}[M \boxtimes F_{({\bf 0, 0})}^{\cal J}(B(M^{\rm gp}_{h{\cal J}}))] \xrightarrow{} {\Bbb S}^{\cal J}[M]\] induces an isomorphism on $\pi_0$. For this we use that ${\Bbb S}^{\cal J}[M \boxtimes F_{({\bf 0, 0})}^{\cal J}(B(M^{\rm gp}_{h{\cal J}}))] \cong {\Bbb S}^{\cal J}[M] \wedge {\Bbb S}^{\cal J}[F_{({\bf 0, 0})}^{\cal J}(B(M^{\rm gp}_{h{\cal J}}))]$ together with the fact that the composite functor ${\Bbb S}^{\cal J} \circ F_{({\bf 0, 0})}^{\cal J}$ equals the unreduced suspension functor $\Sigma_+^{\infty}$ to infer that the domain of the map in question is isomorphic to ${\Bbb S}^{\cal J}[M] \wedge \Sigma^{\infty}_+(B(M^{\rm gp}_{h{\cal J}}))$. We again apply the isomorphism \[\pi_0({\Bbb S}^{\cal J}[M] \wedge \Sigma^{\infty}_+(B(M^{\rm gp}_{h{\cal J}})) \cong \pi_0{\Bbb S}^{\cal J}[M] \otimes_{\Bbb Z} \pi_0\Sigma^{\infty}_+(B(M^{\rm gp}_{h{\cal J}}))\] of (\ref{pizeroiso}). Since $B(M^{\rm gp}_{h{\cal J}})$ is path-connected, this is isomorphic to $\pi_0{\Bbb S}^{\cal J}[M]$. 
\end{proof}

\begin{remark} It is easy to check that the commutative ${\cal J}$-space monoid $V(M)$ appearing in the above proof is ${\cal J}$-equivalent to $W(B^{\rm cy}(M))$ as defined in Definition \ref{w}. The former formulation is more convenient when we model the replete bar construction by the homotopy pullback $B^{\rm rep}(M)$ as opposed to the relative fibrant replacement $B^{\rm cy}(M)^{\rm rep}$. The advantage of the formulation given in Definition \ref{w} is that the resulting commutative ${\cal J}$-space monoid has a direct augmentation to the initial commutative ${\cal J}$-space monoid $U^{\cal J}$, which we used on numerous occasions in Section \ref{logtaq}.  
\end{remark}

\begin{proposition}\label{prop:logtaqetaleimplieslogetaledescent} Let $(R, P) \xrightarrow{(f, f^\flat)} (A, M)$ be a cofibration of cofibrant and connective pre-logarithmic ring spectra. If the $A$-module spectrum ${\rm TAQ}^{(R, P)}(A, M)$ is contractible, then $(f, f^\flat)$ satisfies log \'etale descent. That is, the natural map \[A \wedge_R {\rm THH}(R, P) \xrightarrow{\simeq} {\rm THH}(A, M)\] is a stable equivalence of commutative symmetric ring spectra.
\end{proposition}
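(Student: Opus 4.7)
The plan is to apply Kuhn's tower of Theorem \ref{kuhnanalysis} to both $X := A \wedge_R {\rm THH}(R, P)$ and $Y := {\rm THH}(A, M)$, regarded as augmented commutative $A$-algebras via the augmentation ${\rm THH}(R, P) \to R$ and via the multiplication of $(A, M)$, respectively. The natural map $X \to Y$ is one of augmented $A$-algebras, and the goal is to show it is a stable equivalence.

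First I would verify the convergence hypothesis of Theorem \ref{kuhnanalysis}. Under the standing connectivity assumption, both $X$ and $Y$ are connective commutative symmetric ring spectra. Applying Proposition \ref{logaugmentation} in the absolute case (that is, with $(\Bbb S, U^{\cal J})$ in place of $(R, P)$) shows that the augmentations ${\rm THH}(A, M) \to A$ and ${\rm THH}(R, P) \to R$ are isomorphisms on $\pi_0$; the isomorphism (\ref{pizeroiso}) then implies that the augmentation $X \to A$ induces an isomorphism on $\pi_0$ as well. Hence the augmentation ideals $I^{\Bbb R}_A(X)$ and $I^{\Bbb R}_A(Y)$ are both $0$-connected, and Theorem \ref{kuhnanalysis} furnishes convergent Kuhn towers $\{P_{A, n}(X)\}$ and $\{P_{A, n}(Y)\}$ with successive fibers $[\bigwedge^n_A {\rm taq}^A(X)]_{h\Sigma_n}$ and $[\bigwedge^n_A {\rm taq}^A(Y)]_{h\Sigma_n}$.

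The natural map $X \to Y$ induces a map of Kuhn towers. Starting from the base case $P_{A, 0}(X) \simeq A \simeq P_{A, 0}(Y)$ and inducting on $n$ using the fiber sequences, the problem reduces to showing that the induced map ${\rm taq}^A(X) \to {\rm taq}^A(Y)$ is a stable equivalence of $A$-modules; convergence of the towers then propagates this to $X \xrightarrow{\simeq} Y$.

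The change--of--rings Lemma \ref{lem:changeofrings} gives ${\rm taq}^A(X) \simeq A \wedge_R {\rm taq}^R({\rm THH}(R, P))$, while the absolute case of Lemma \ref{taqoflogthh} identifies ${\rm taq}^R({\rm THH}(R, P))$ and ${\rm taq}^A(Y)$ with $\Sigma{\rm TAQ}(R, P)$ and $\Sigma{\rm TAQ}(A, M)$ respectively. The comparison map thus becomes the suspension of the natural map $A \wedge_R {\rm TAQ}(R, P) \to {\rm TAQ}(A, M)$, whose cofiber is ${\rm TAQ}^{(R, P)}(A, M)$ by the log transitivity sequence of Proposition \ref{logtransitivity} applied to $(\Bbb S, U^{\cal J}) \to (R, P) \to (A, M)$. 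By hypothesis this cofiber is contractible, so the map on ${\rm taq}^A$ is an equivalence and the proof is complete. The main obstacle is securing convergence of the Kuhn towers, which is precisely the role played by the notion of connectivity from Definition \ref{logconnective} via Proposition \ref{logaugmentation}.
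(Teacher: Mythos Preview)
Your argument follows the same overall strategy as the paper: apply Kuhn's tower to both augmented $A$-algebras $X = A \wedge_R {\rm THH}(R,P)$ and $Y = {\rm THH}(A,M)$, verify convergence via Proposition~\ref{logaugmentation}, and reduce to showing that ${\rm taq}^A(X) \to {\rm taq}^A(Y)$ is an equivalence. The difference lies only in how you conclude this last step.

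The paper computes the cofiber of $X \to Y$ in augmented commutative $A$-algebras directly, identifying it with ${\rm THH}^{(R,P)}(A,M)$ via Lemma~\ref{logthhjuggling}, and then applies Lemma~\ref{taqoflogthh} \emph{once} to conclude that the cofiber of ${\rm taq}^A(X) \to {\rm taq}^A(Y)$ is $\Sigma{\rm TAQ}^{(R,P)}(A,M)$. You instead identify ${\rm taq}^A(X)$ and ${\rm taq}^A(Y)$ separately via Lemma~\ref{lem:changeofrings} and the absolute case of Lemma~\ref{taqoflogthh}, and then invoke the transitivity sequence of Proposition~\ref{logtransitivity}. The paper in fact mentions exactly your route in the remark immediately following its proof.

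There is one subtlety you glide over: the equivalence of Lemma~\ref{taqoflogthh} is obtained from the \emph{chain} of equivalences in Proposition~\ref{logthhsuspension}, and that chain is not manifestly natural in the morphism of pre-log ring spectra. Thus your sentence ``The comparison map thus becomes the suspension of the natural map $A \wedge_R {\rm TAQ}(R,P) \to {\rm TAQ}(A,M)$'' needs justification: you must check that the two separate identifications are compatible with the map $X \to Y$. The paper's remark acknowledges this and indicates the fix (work in a model category of arrows so that the factorizations in Proposition~\ref{logthhsuspension} become natural in $(R,P) \to (A,M)$), but the paper's own proof avoids the issue entirely by never separating the two sides. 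Your approach is correct once this naturality is secured; the paper's is slightly cleaner because it sidesteps the problem.
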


\begin{proof} By Proposition \ref{logaugmentation}, we have that Theorem \ref{kuhnanalysis} is applicable to both ${\rm THH}(R, P)$ and ${\rm THH}(A, M)$. The isomorphism (\ref{pizeroiso}) then shows that it is also applicable to the augmented commutative $A$-algebra $A \wedge_R {\rm THH}(R, P)$.  We apply Theorem \ref{kuhnanalysis} to the morphism \[A \wedge_R {\rm THH}(R, P) \to {\rm THH}(A, M)\] of commutative augmented $A$-algebras to obtain a commutative diagram \[\begin{tikzcd}[row sep = small]A \wedge_R {\rm THH}(R, P) \ar{r} \ar{d}{\simeq} & {\rm THH}(A, M) \ar{d}{\simeq} \\ {\rm holim}(P_{A, n}(A \wedge_R {\rm THH}(R, P))) \ar{r} & {\rm holim}(P_{A, n}({\rm THH}(A, M)))\end{tikzcd}\] of commutative augmented $A$-algebras.

We claim that each of the morphisms \[P_{A, n}(A \wedge_R {\rm THH}(R, P)) \to P_{A, n}({\rm THH}(A, M))\] is a stable equivalence, from which it follows that the map on homotopy limits is also a stable equivalence. 

We proceed by induction on $n$. For $n = 0$, we consider the commutative diagram \[\begin{tikzcd}[row sep = small]A \wedge_R {\rm THH}(R, P) \ar{r} \ar{d} & {\rm THH}(A, M) \ar{d} \\ P_{A, 0}(A \wedge_R {\rm THH}(R, P)) \ar{r} & P_{A, 0}({\rm THH}(A, M)) \end{tikzcd}\] of augmented commutative $A$-algebras. The lower map is a stable equivalence by property (1) of Theorem \ref{kuhnanalysis}: the vertical maps in the diagram are stably equivalent to the augmentations to $A$, and the upper horizontal map is one over $A$. 

For $n > 0$, there is a map of fiber sequences 
\[\begin{tikzcd}[row sep = small]{[}\bigwedge_A^n {\rm taq}^A(A \wedge_R {\rm THH}(R, P)){]}_{h\Sigma_n} \ar{r} \ar{d} & {[}\bigwedge_A^n {\rm taq}^A({\rm THH}(A, M)){]}_{h\Sigma_n} \ar{d} \\ P_{A, n}(A \wedge_R {\rm THH}(R, P)) \ar{d} \ar{r} & P_{A, n}({\rm THH}(A, M)) \ar{d} \\  P_{A, n - 1}(A \wedge_R {\rm THH}(R, P)) \ar{r} & P_{A, n - 1}({\rm THH}(A, M)) \end{tikzcd}\] in the category of $A$-modules. By induction hypothesis, the bottom map is a stable equivalence. We prove that the morphism \begin{equation}\label{noteonproof}{\rm taq}^A(A \wedge_R {\rm THH}(R, P)) \xrightarrow{} {\rm taq}^A({\rm THH}(A, M))\end{equation} is a stable equivalence, which will conclude the proof. Its homotopy cofiber is ${\rm taq}^A(-)$ of the homotopy cofiber of the map $A \wedge_R {\rm THH}(R, P) \xrightarrow{} {\rm THH}(A, M)$ in the category of augmented commutative $A$-algebras. The latter homotopy cofiber is \[A \wedge^{\Bbb L}_{A \wedge_R {\rm THH}(R, P)} {\rm THH}(A, M) \cong (A \wedge_R R) \wedge^{\Bbb L}_{A \wedge_R {\rm THH}(R, P)} (A \wedge_A {\rm THH}(A, M)).\] By commuting homotopy pushouts, this is stably equivalent to the augmented commutative $A$-algebra $R \wedge_{{\rm THH}(R, P)}^{\Bbb L} {\rm THH}(A, M)$, which is stably equivalent to ${\rm THH}^{(R, P)}(A, M)$ by Proposition \ref{logthhjuggling}. By Lemma \ref{taqoflogthh}, the homotopy cofiber of (\ref{noteonproof}) is therefore stably equivalent to $\Sigma {\rm TAQ}^{(R, P)}(A, M)$, which is contractible by assumption. This concludes the proof.  \end{proof}

\begin{remark} One can also prove that (\ref{noteonproof}) is a stable equivalence by means of the homotopy cofiber sequence \[A \wedge_R \Sigma {\rm TAQ}(R, P) \xrightarrow{} \Sigma {\rm TAQ}(A, M) \xrightarrow{} \Sigma {\rm TAQ}^{(R, P)}(A, M)\] of Proposition \ref{logtransitivity}, since by Lemma \ref{lem:changeofrings} we have that Lemma \ref{taqoflogthh} is applicable to (\ref{noteonproof}).  The lack of naturality in Proposition \ref{logthhsuspension} may be dealt with by forming the relevant factorizations and lifts in a model category of arrows to make the construction natural in the morphism $(R, P) \to (A, M)$. 
\end{remark}

We have now provided a full proof of Theorem \ref{mainthm}, which we summarize here:

\begin{proof}[Proof of Theorem \ref{mainthm}] This follows from Propositions \ref{logetaledescentimplieslogthhetale}, \ref{logthhetaleimplieslogtaqetale} and \ref{prop:logtaqetaleimplieslogetaledescent}. 
\end{proof}

\section{Logarithmic ${\rm THH}$ of discrete log rings}\label{sec:discrete} By definition, a \emph{discrete pre-log ring} $(R, P, \beta)$ consists of a commutative ring $R$, a commutative monoid $P$ and a map $\beta \colon P \to (R, \cdot)$ of commutative monoids to the underlying multiplicative monoid of $R$. As explained in \cite[Section 5]{RSS15}, this gives rise to a pre-log ring spectrum $(HR, FP)$. A special case of this is Example \ref{ex:prelogstr}(3), which will be the main example of interest in this section. We shall simplify notation and write \[{\rm THH}^{(R, P)}(A, M) := {\rm THH}^{(HR, FP)}((HA, FM)^{\rm cof})\] for a map $(R, P) \to (A, M)$ of discrete pre-log rings. Here $(HA, FM)^{\rm cof}$ denotes a cofibrant replacement of $(HA, FM)$ relative to $(HR, FP)$ in the projective model structure. If $M$ is a discrete commutative monoid, we shall write $M^{\rm gp}$ for its usual group completion. 

The aim of this section is to prove the following: 

\begin{theorem}\label{thm:logthhbasechangedvrs} Let $R \to A$ be a tamely ramified finite extension of complete discrete valuation rings in mixed characteristic $(0, p)$ with perfect residue fields. Let $K \to L$ be the induced map of fraction fields. Then the natural map \[A \wedge_R {\rm THH}(R, R \cap {\rm GL}_1(K)) \to {\rm THH}(A, A \cap {\rm GL}_1(L))\] is a stable equivalence, i.e., the map $(R, R \cap {\rm GL}_1(K)) \to (A, A \cap {\rm GL}_1(L))$ of discrete log rings satisfies log \'etale descent. 
\end{theorem}

In fact, the conclusion of the Theorem \ref{thm:logthhbasechangedvrs} holds for any map which is \emph{log \'etale} in the following sense:

\begin{definition}\label{def:prelogetale} A map $(R, P^a) \to (A, M^a)$ of discrete log rings is \emph{log \'etale} if it arises as the logification of a map $(R, P) \to (A, M)$ of pre-log rings such that \begin{enumerate} \item the map $R \otimes_{{\Bbb Z}[P]} {\Bbb Z}[M] \to A$ of commutative rings is \'etale; \item the map $P^{\rm gp} \to M^{\rm gp}$ is an injection with finite cokernel of order invertible in $A$.\end{enumerate}
\end{definition}

This is an adaptation of Kato's definition of log \'etale maps of log schemes \cite[3.3, 3.5.2]{Kat89}, and in particular any log \'etale map in the sense of Definition \ref{def:prelogetale} gives rise to a log \'etale map $({\rm Spec}(A), M^a) \to ({\rm Spec}(R), P^a)$ of log schemes. 

\begin{example}\label{ex:tamelyramified} Recall (from e.g.\ \cite[Chapter 2.5]{Ser}) that any complete discrete valuation ring $R$ in mixed characteristic $(0, p)$ with perfect residue field $k$ is of the form $W(k)[x]/\Phi(x)$, with $W(k)$ the $p$-typical Witt vectors on $k$ and $\Phi(x)$ an Eisenstein polynomial of degree the absolute ramification index $e_{R}$ of $R$.

 Following Hesselholt--Madsen \cite[Section 2.2]{HM03} and Rognes \cite[Example 4.32]{Rog09}, let $R \to A$ be a tamely ramified finite extension of such discrete valuation rings. Let $\pi_{R}$ denote a uniformizer of $R$.  As explained in \cite[Proof of Lemma 2.2.6]{HM03}, we may reduce to the case where the extension is of the form $R \to R[x]/(x^{{e_A}/e_{R}} - \pi_{R})$; indeed, any map $R \to A$ of this form factors as an unramified extension followed by a totally tamely ramified extension. It is now clear that the logification of the map \[(R, \langle \pi_R \rangle) \to (R[x]/(x^{{e_A}/e_{R}} - \pi_{R}), \langle x \rangle)\] is log \'etale, since $e_A/e_R$ is coprime to $p$ by the assumption that the extension is tamely ramified.
\end{example}

The following definition makes use of the functor $\gamma$ from commutative ${\cal J}$-space monoids to Segal $\Gamma$-spaces introduced in \cite[Section 3]{Sag16}. It has the property that $M \to N$ is a weak equivalence in the group completion model structure if $\gamma(M) \to \gamma(N)$ is a weak equivalence of $\Gamma$-spaces. 

\begin{definition}\label{def:logetalespectra} A map $(R, P^a) \to (A, M^a)$ of log ring spectra is \emph{log \'etale} if it arises as the logification of a cofibration $(R, P) \to (A, M)$ of cofibrant pre-log ring spectra such that

\begin{enumerate}
\item the induced map $R \wedge_{{\Bbb S}^{\cal J}[P]} {\Bbb S}^{\cal J}[M] \to A$ is \'etale;
\item the $A$-module $A \wedge (\gamma(M)/\gamma(P))$ is contractible. 
\end{enumerate} 
\end{definition}

We only make use of the functor $\gamma$ to make the analogy with Definition \ref{def:prelogetale} as transparent as possible. In practice, we shall use the following description of the $A$-module $A \wedge (\gamma(M)/\gamma(P))$:

\begin{lemma}\label{lem:taqlemma} Let $(R, P) \to (A, M)$ be a cofibration of cofibrant pre-log ring spectra. There is a natural weak equivalence \[A \wedge (\gamma(M)/\gamma(P)) \simeq {\rm taq}^A(A \wedge_{{\Bbb S}^{\cal J}[M]} {\Bbb S}^{\cal J}[(M \boxtimes_P M)^{\rm rep}])\] of $A$-modules.
\end{lemma}

\begin{proof} By \cite[Proposition 5.19]{Sag14} and Theorem \ref{thm:taqcorepresents}, both of the $A$-modules in question corepresents the same functor as $A \wedge_{{\Bbb S}^{\cal J}[M]} {\rm TAQ}^{({\Bbb S}^{\cal J}[P], P)}({\Bbb S}^{\cal J}[M], M)$, namely the space of monoid derivations $X \mapsto {\rm Map}_{{\cal C}{\cal S}^{\cal J}_{P//M}}(M, (M + X)^{\cal J})$.
\end{proof}

A discrete commutative monoid $P$ is \emph{integral} if the canonical map $P \to P^{\rm gp}$ is injective, and a discrete pre-log ring $(R, P)$ is \emph{integral} if the underlying commutative monoid $P$ is.

\begin{proposition}\label{prop:discretelogetaleimplieslogetale} Let $(R, P^a) \to (A, M^a)$ be a log \'etale morphism of log rings which arises as the logification of a map $(f, f^\flat) \colon (R, P) \to (A, M)$ of integral pre-log rings satisfying the conditions of Definition \ref{def:prelogetale}. Assume moreover that ${\Bbb Z}[f^{\flat}] \colon {\Bbb Z}[P] \to {\Bbb Z}[M]$ is flat. Then the map $(Hf, Ff^\flat) \colon (HR, FP) \to (HA, FM)$, up to cofibrant replacements, logifies to a log \'etale map of log ring spectra. 
\end{proposition}

\begin{proof} We first check that the map $HR \wedge_{{\Bbb S}[P]}^{\Bbb L} {\Bbb S}[M] \to HA$ of commutative ring spectra is \'etale. On $\pi_0$ this is the map $R \otimes_{{\Bbb Z}[P]} {\Bbb Z}[M] \to A$ which is \'etale by assumption. Since both squares in the diagram \[\begin{tikzcd}{\Bbb S}[P] \ar{d} \ar{r} & H{\Bbb Z}[P] \ar{d} \ar{r} & HR \ar{d} \\ {\Bbb S}[M] \ar{r} & H{\Bbb Z}[M] \ar{r} & HR \wedge^{\Bbb L}_{H{\Bbb Z}[P]} H{\Bbb Z}[M]\end{tikzcd}\] are homotopy cocartesian, we obtain the desired isomorphism \[A \otimes_{R \otimes_{{\Bbb Z}[P]} {\Bbb Z}[M]} \pi_*(HR \wedge_{{\Bbb S}[P]}^{\Bbb L} {\Bbb S}[M]) \to A\] of graded rings from the flatness hypothesis.

We now check that the $HA$-module $HA \wedge^{\Bbb L} (\gamma(FM)/\gamma(FP))$ is contractible.  Recall that ${\Bbb S}^{\cal J} \circ F = \Sigma^{\infty}_+ = {\Bbb S}[-]$ is the usual spherical monoid ring construction, so that Lemma \ref{lem:taqlemma} asserts that this is naturally weakly equivalent to the $HA$-module \[{\rm taq}^{HA}(HA \wedge_{{\Bbb S}[M]}^{\Bbb L} {\Bbb S}^{\cal J}[(FM \boxtimes_{FP} FM)^{\rm rep}])\simeq {\rm taq}^{HA}(HA \wedge_{{\Bbb S}[M]}^{\Bbb L} {\Bbb S}^{\cal J}[F(M \oplus_{P} M)^{\rm rep}])\] Here $F(M \oplus_{P} M)^{\rm rep}$ denotes the repletion of the augmentation map $F(M \oplus_{P} M) \to F(M)$. Arguing as in \cite[Lemma 5.1]{RSS15}, we find that ${\Bbb S}^{\cal J}[F(M \oplus_{P} M)^{\rm rep}] \simeq {\Bbb S}[(M \oplus_{P} M)^{\rm rep}]$. 

We now apply \cite[Proposition 4.2.19]{Ogu18}, which asserts that $(M \oplus_{P} M)^{\rm rep} \cong M \oplus M^{\rm gp}/P^{\rm gp}$ (this uses the integrality hypothesis). This allows us to further rewrite \[HA \wedge_{{\Bbb S}[M]}^{\Bbb L} {\Bbb S}[(M \oplus_{P} M)^{\rm rep}] \simeq HA \wedge_{{\Bbb S}[M]}^{\Bbb L} {\Bbb S}[M \oplus M^{\rm gp}/P^{\rm gp}] \simeq HA[M^{\rm gp}/P^{\rm gp}].\] We can compute the ${\rm taq}$ of this group ring using $\Gamma$-homology. Notice that Lemma \ref{lem:changeofrings} implies that ${\rm taq}^{HA}(HA[M^{\rm gp}/P^{\rm gp}])$ is naturally weakly equivalent to \[{\rm taq}^{HA[M^{\rm gp}/P^{\rm gp}]}(HA[M^{\rm gp}/P^{\rm gp}] \wedge_{HA} HA[M^{\rm gp}/P^{\rm gp}]) \wedge_{HA[M^{\rm gp}/P^{\rm gp}]}^{\Bbb L} HA,\] which is typically denoted ${\rm TAQ}^{HA}(HA[M^{\rm gp}/P^{\rm gp}] ; HA)$. By Basterra--McCarthy \cite{BM02} (see also Basterra--Richter \cite[Section 2]{BR04}), the homotopy groups of this ${\rm TAQ}$-term is equivalent to the $\Gamma$-homology $H\Gamma_*^{HA}(HA[M^{\rm gp}/P^{\rm gp}] ; HA)$. By Richter--Robinson \cite[Proposition 3.1]{RR04}, this is $(HA)_*(H(M^{\rm gp}/P^{\rm gp}))$; the $HA$-homology of $H(M^{\rm gp}/P^{\rm gp})$. This is trivial since the order of $M^{\rm gp}/P^{\rm gp}$ is invertible in $A$. 
\end{proof}

Together with Proposition \ref{prop:discretelogetaleimplieslogetale}, the following connects Definition \ref{def:prelogetale} to the notions of formal log \'etaleness studied in Section \ref{lastsection}: 

\begin{lemma}\label{lem:logetaleimpliesformallylogetale} Let $(f, f^\flat) \colon (R, P) \to (A, M)$ be a log \'etale morphism of log ring spectra. Then $(f, f^\flat)$ is formally log ${\rm TAQ}$-\'etale.
\end{lemma}

\begin{proof} By Proposition \ref{prop:logtaqlogi}, we may without loss of generality assume that $(f, f^\flat)$ is a map of pre-log ring spectra satisfying the hypotheses of Definition \ref{def:logetalespectra}. By \cite[Lemma 11.25]{Rog09} or \cite[Lemma 6.2]{Sag14} there is a homotopy cofiber sequence \begin{equation}\label{cofiberseqquot}A \wedge (\gamma(M)/\gamma(P)) \to {\rm TAQ}^{(R, P)}(A, M) \to {\rm TAQ}^{R \wedge_{{\Bbb S}^{\cal J}[P]} {\Bbb S}^{\cal J}[M]}(A)\end{equation} of $A$-modules. The first term in this sequence is contractible by assumption, while the right-hand ${\rm TAQ}$-term is contractible as a consequence of $R \wedge_{{\Bbb S}^{\cal J}[P]} {\Bbb S}^{\cal J}[M] \to A$ being \'etale (see e.g.\ \cite[Corollary 7.5.4.5]{Lur}).
\end{proof}

\begin{remark} At the time of writing, it is not clear to the author under which hypotheses formally log ${\rm TAQ}$-\'etale implies log \'etale in the sense of Definition \ref{def:logetalespectra}. This is closely related to \cite[Remark 11.26]{Rog09}. 
\end{remark}

\begin{proof}[Proof of Theorem \ref{thm:logthhbasechangedvrs}] Example \ref{ex:tamelyramified} shows that any such map of discrete valuation rings is log \'etale. Proposition \ref{prop:discretelogetaleimplieslogetale} shows that this gives rise to a log \'etale map of log ring spectra. By Lemma \ref{lem:logetaleimpliesformallylogetale}, any log \'etale map is formally log ${\rm TAQ}$-\'etale. Since Eilenberg--Mac\,Lane spectra and spherical monoid rings are connective, Theorem \ref{mainthm} applies to conclude the proof.
\end{proof}

\section{Logarithmic ${\rm TAQ}$ as a cotangent complex}\label{sec:cotangentcomplex} In this short final section, we explain how our description of log ${\rm TAQ}$ may be interpreted in the cotangent complex formalism of Lurie \cite[Section 7]{Lur}. We shall continue to work in the context of model categories, and we refer to the work of Harpaz--Nuiten--Prasma \cite{HNP19} for a construction of the tangent bundle in this context. 

In \cite[Section 3]{SSV16}, a \emph{replete} model structure on the category ${\rm PreLog}$ of pre-logarithmic ring spectra is described. The fact that \emph{loc.\ cit.} works with simplicial pre-log rings does not give rise to technical difficulties, and their arguments apply \emph{mutatis mutandis} in the present context. This model structure arises by forming a left Bousfield localization of the projective model structure on ${\rm PreLog}$ with respect to the set $({\Bbb S}^{\cal J}[Q], Q)$, where $Q$ is the set of morphisms in ${\cal C}{\cal S}^{\cal J}$ at which one localizes to obtain the group completion model structure. Consequently, for a morphism $(B, N) \to (A, M)$, the map $(B, N) \xrightarrow{} (B \wedge_{{\Bbb S}^{\cal J}[N]} {\Bbb S}^{\cal J}[N^{\rm rep}], N^{\rm rep})$ is an acyclic cofibration in this model structure. However, there is no reason to believe that its codomain is fibrant over $(A, M)$. Here we explain how to alleviate this issue by only forming the left Bousfield localization after passing to the appropriate slice category:

\begin{proposition}\label{prop:repletemodel} Let $(A, M)$ be a cofibrant pre-logarithmic ring spectrum. The category ${\rm PreLog}_{(A, M)//(A, M)}$ admits a \emph{replete} model structure ${\rm PreLog}^{\rm rep}_{(A, M)//(A, M)}$ in which the fibrant objects are the fibrant $(B, N) \to (A, M)$ with $N$ replete over~$M$. Up to a fibrant replacement in the projective model structure, the functor \[((B, N) \to (A, M)) \mapsto ((B \wedge_{{\Bbb S}^{\cal J}[N]} {\Bbb S}^{\cal J}[N^{\rm rep}], N^{\rm rep}) \to (A, M))\] is a fibrant replacement functor in this model structure. 
\end{proposition}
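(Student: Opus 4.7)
The plan is to construct the replete model structure as a left Bousfield localization of the projective model structure on the slice category ${\rm PreLog}_{(A, M) // (A, M)}$, mirroring the construction of \cite[Section 3]{SSV16} but performed internally to the slice so that repletion is taken relative to $M$ rather than absolutely. First I would verify the prerequisites for left Bousfield localization: the projective model structure on ${\rm PreLog}$ is cofibrantly generated and left proper (being a product-type model structure on pairs inherited from ${\cal C}{\rm Sp}^{\Sigma}$ and ${\cal C}{\cal S}^{\cal J}$), and these properties descend to ${\rm PreLog}_{(A, M) // (A, M)}$, using cofibrancy of $(A, M)$ to preserve left properness.

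Next I would specify the localization set. Take $Q$ to be a set of cofibrations in the positive $\cal J$-model structure at which the left Bousfield localization produces the group completion model structure, as supplied by \cite[Theorem 5.5]{Sag16}. Define $S$ to consist of the morphisms
\[
(A \wedge {\Bbb S}^{\cal J}[K'], M \boxtimes K') \longrightarrow (A \wedge {\Bbb S}^{\cal J}[K], M \boxtimes K)
\]
for $K' \to K$ in $Q$, each equipped with the evident augmentation over $(A, M)$ via the identity on $(A, M)$ and the unique map $K \to U^{\cal J}$. Applying Hirschhorn's left Bousfield localization theorem yields the replete model structure ${\rm PreLog}^{\rm rep}_{(A, M) // (A, M)}$.

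I would then identify the fibrant objects. Combining the mapping space description of Definition \ref{logmappingspace} with the $({\Bbb S}^{\cal J}, \Omega^{\cal J})$-adjunction translates $S$-locality of $(B, N) \to (A, M)$ into the condition that the mapping spaces in ${\cal C}{\cal S}^{\cal J}_{N // M}$ out of the maps in $Q$ are weak equivalences. Together with projective fibrancy this is exactly the assertion that $N \to M$ is fibrant in the slice group completion model structure, which via Lemma \ref{repletepullback} amounts to $N$ being replete over $M$. For the fibrant replacement statement, observe that the map $N \to N^{\rm rep}$ is, by its very definition, an acyclic cofibration in the group completion model structure, and pushing out along the structure map ${\Bbb S}^{\cal J}[N] \to B$ exhibits $(B, N) \to (B \wedge_{{\Bbb S}^{\cal J}[N]} {\Bbb S}^{\cal J}[N^{\rm rep}], N^{\rm rep})$ as an acyclic cofibration in the replete model structure; the codomain has replete log structure by construction, so a subsequent projective fibrant replacement of the commutative symmetric ring spectrum component (leaving the $\cal J$-space monoid unchanged) lands in the class of fibrant objects.

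The main obstacle will be the identification of $S$-local fibrant objects with positively fibrant pre--log ring spectra whose log structure is replete over $M$. This requires carefully transporting the characterization of $Q$-locality across the ${\Bbb S}^{\cal J}[-] \dashv \Omega^{\cal J}(-)$ adjunction inside the slice, and using that the slice group completion model structure on ${\cal C}{\cal S}^{\cal J}_{/M}$ exists with the expected fibrant objects. Once this translation is in place, the remaining verifications --- left properness of the slice, functoriality of the fibrant replacement functor up to projective fibrant replacement, and compatibility of repletion with pushouts --- follow from the general theory of Bousfield localization together with the results already assembled in Section \ref{commjspace}.
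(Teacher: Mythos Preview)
Your overall strategy --- left Bousfield localization of the projective model structure on the slice, followed by identifying the local objects via an adjunction --- matches the paper's. However, there is a genuine gap in your choice of localizing set.

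You propose to take $Q$ to be the absolute localizing set from \cite{Sag16} and then promote each $K' \to K$ in $Q$ to a map
\[
(A \wedge {\Bbb S}^{\cal J}[K'], M \boxtimes K') \longrightarrow (A \wedge {\Bbb S}^{\cal J}[K], M \boxtimes K)
\]
in the slice ${\rm PreLog}_{(A, M)//(A, M)}$, equipping it with an augmentation ``via the identity on $(A, M)$ and the unique map $K \to U^{\cal J}$.'' But $U^{\cal J}$ is the \emph{initial} object of ${\cal C}{\cal S}^{\cal J}$, not the terminal one: there is in general no map $K \to U^{\cal J}$, and hence no augmentation $M \boxtimes K \to M$. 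Without this, your objects do not live in the double slice at all, and $S$ is not defined. The subsequent translation of $S$-locality into a condition on mapping spaces in ${\cal C}{\cal S}^{\cal J}_{N // M}$ (which should in any case be ${\cal C}{\cal S}^{\cal J}_{M // M}$) therefore has nothing to act on.

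The paper avoids this by never leaving the slice on the monoid side: it takes $Q(M)$ to be a set of generating acyclic cofibrations of $({\cal C}{\cal S}^{\cal J}_{\rm gp})_{M // M}$, which are already maps under and over $M$, and then applies the left adjoint $N \mapsto (A \wedge_{{\Bbb S}^{\cal J}[M]} {\Bbb S}^{\cal J}[N], N)$ of the forgetful functor ${\rm PreLog}_{(A, M)//(A, M)} \to {\cal C}{\cal S}^{\cal J}_{M // M}$ to obtain $S$. This adjunction immediately gives
\[
{\rm Map}_{{\rm PreLog}_{(A, M)//(A, M)}}\bigl((A \wedge_{{\Bbb S}^{\cal J}[M]} {\Bbb S}^{\cal J}[Q(M)], Q(M)), (B, N)\bigr) \simeq {\rm Map}_{{\cal C}{\cal S}^{\cal J}_{M // M}}(Q(M), N),
\]
so $S$-locality is exactly $Q(M)$-locality of $N$, i.e.\ fibrancy in $({\cal C}{\cal S}^{\cal J}_{\rm gp})_{M // M}$. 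Your verification of the fibrant replacement functor is fine once the localizing set is corrected; the paper phrases it as the pushout of $(B, N) \leftarrow (A \wedge_{{\Bbb S}^{\cal J}[M]} {\Bbb S}^{\cal J}[N], N) \to (A \wedge_{{\Bbb S}^{\cal J}[M]} {\Bbb S}^{\cal J}[N^{\rm rep}], N^{\rm rep})$, which agrees with what you wrote.
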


\begin{proof} Let $Q(M)$ be a set of generating acyclic cofibrations in the slice category $({\cal C}{\cal S}^{\cal J}_{\rm gp})_{M // M}$. Define the replete model structure on ${\rm PreLog}_{(A, M)//(A, M)}$ to be the left Bousfield localization of the projective one with respect to the set \[S = (A \wedge_{{\Bbb S}^{\cal J}[M]} {\Bbb S}^{\cal J}[Q(M)], Q(M)).\] Since ${\rm Map}_{{\rm PreLog}_{(A, M)//(A, M)}}((A \wedge_{{\Bbb S}^{\cal J}[M]} {\Bbb S}^{\cal J}[Q(M)], Q(M)), (B, N))$ is naturally weakly equivalent to  ${\rm Map}_{{\cal C}{\cal S}^{\cal J}_{M // M}}(Q(M), N)$ by adjunction, we find that $(B, N)$ is S-local if and only if $N$ is $Q(M)$-local. This means that $N$ is fibrant in $({\cal C}{\cal S}^{\cal J}_{\rm gp})_{M // M}$, i.e., replete over $M$. 

The proposed fibrant replacement functor arises from considering the pushout of the diagram \[(B, N) \xleftarrow{} (A \wedge_{{\Bbb S}^{\cal J}[M]} {\Bbb S}^{\cal J}[N], N) \xrightarrow{} (A \wedge_{{\Bbb S}^{\cal J}[M]} {\Bbb S}^{\cal J}[N^{\rm rep}], N^{\rm rep})\] of pre-logarithmic ring spectra. 
\end{proof}

In \cite[Remark 8.8]{Rog09}, a version of the stabilization ${\rm Sp}({\rm PreLog}^{\rm rep}_{(A, M)//(A, M)})$ is suggested as a category of \emph{log modules} ${\rm Mod}_{(A, M)}$. We point out two desirable properties of this category here:

\begin{enumerate}
\item We can realize log ${\rm THH}$ as the cyclic bar construction in ${\rm PreLog}$, provided that we pass to the replete model structure after forming it. Since $B^{\rm cy}_{(R, P)}(A, M) = (B^{\rm cy}_R(A), B^{\rm cy}_P(M))$, we have that \[B^{\rm cy}_{(R, P)}(A, M)^{\rm rep} = (B^{\rm cy}_R(A) \wedge_{{\Bbb S}^{\cal J}[B^{\rm cy}_P(M)]} {\Bbb S}^{\cal J}[B^{\rm cy}_P(M)^{\rm rep}], B^{\rm cy}_P(M)^{\rm rep})\] is a fibrant replacement of the cyclic bar construction over $(A, M)$. This is $({\rm THH}^{(R, P)}(A, M), B^{\rm cy}_P(M)^{\rm rep})$ by definition. 
\item The chain of Quillen equivalences (\ref{basterramandelleq}) exhibit ${\rm Mod}_A$ as the tangent category of ${\cal C}{\rm Sp}^{\Sigma}$ at $A$ and ${\rm TAQ}^R(A)$ as its cotangent complex. By definition, the stabilized category ${\rm Sp}({\rm PreLog}_{(A, M)//(A, M)})$ is the \emph{tangent category} of the category of pre-logarithmic ring spectra at $(A, M)$. This comes with a \emph{cotangent complex} ${\Bbb L}_{(A, M) | (R, P)} = \Sigma^{\infty}(A \wedge_R A, M \boxtimes_P M)$. If we pass to the replete model structure before stabilizing, we find that the cotangent complex ${\Bbb L}_{(A, M) | (R, P)}$ is level equivalent to \[ \Sigma^{\infty}((A \wedge_R A) \wedge_{{\Bbb S}^{\cal J}[M \boxtimes_P M]} {\Bbb S}^{\cal J}[(M \boxtimes_P M)^{\rm rep}], (M \boxtimes_P M)^{\rm rep}).\] We denote this \emph{replete} cotangent complex by ${\Bbb L}_{(A, M) | (R, P)}^{\rm rep}$. This comes with an underlying spectrum object of augmented $A$-algebras, which by the Quillen equivalences (\ref{basterramandelleq}) corresponds to ${\rm TAQ}^{(R, P)}(A, M)$ as we defined it in Definition \ref{def:logtaq}. This was our initial motivation for pursuing this description of log~${\rm TAQ}$. 
\end{enumerate}

\begin{bibdiv}
\begin{biblist}

\bib{Bas99}{article}{
   author={Basterra, M.},
   title={Andr\'{e}-Quillen cohomology of commutative $S$-algebras},
   journal={J. Pure Appl. Algebra},
   volume={144},
   date={1999},
   number={2},
   pages={111--143},
   issn={0022-4049},
   review={\MR{1732625}},
   doi={10.1016/S0022-4049(98)00051-6},
}

\bib{BF78}{article}{
   author={Bousfield, A. K.},
   author={Friedlander, E. M.},
   title={Homotopy theory of $\Gamma $-spaces, spectra, and bisimplicial
   sets},
   conference={
      title={Geometric applications of homotopy theory (Proc. Conf.,
      Evanston, Ill., 1977), II},
   },
   book={
      series={Lecture Notes in Math.},
      volume={658},
      publisher={Springer, Berlin},
   },
   date={1978},
   pages={80--130},
   review={\MR{513569}},
}

\bib{BK72}{book}{
   author={Bousfield, A. K.},
   author={Kan, D. M.},
   title={Homotopy limits, completions and localizations},
   series={Lecture Notes in Mathematics, Vol. 304},
   publisher={Springer-Verlag, Berlin-New York},
   date={1972},
   pages={v+348},
   review={\MR{0365573}},
}

\bib{BM02}{article}{
   author={Basterra, Maria},
   author={McCarthy, Randy},
   title={$\Gamma$-homology, topological Andr\'{e}-Quillen homology and
   stabilization},
   journal={Topology Appl.},
   volume={121},
   date={2002},
   number={3},
   pages={551--566},
   issn={0166-8641},
   review={\MR{1909009}},
   doi={10.1016/S0166-8641(01)00098-0},
}

\bib{BM05}{article}{
   author={Basterra, Maria},
   author={Mandell, Michael A.},
   title={Homology and cohomology of $E_\infty$ ring spectra},
   journal={Math. Z.},
   volume={249},
   date={2005},
   number={4},
   pages={903--944},
   issn={0025-5874},
   review={\MR{2126222}},
   doi={10.1007/s00209-004-0744-y},
}

\bib{BR04}{article}{
   author={Basterra, Maria},
   author={Richter, Birgit},
   title={(Co-)homology theories for commutative ($S$-)algebras},
   conference={
      title={Structured ring spectra},
   },
   book={
      series={London Math. Soc. Lecture Note Ser.},
      volume={315},
      publisher={Cambridge Univ. Press, Cambridge},
   },
   date={2004},
   pages={115--131},
   review={\MR{2122156}},
   doi={10.1017/CBO9780511529955.007},
}

\bib{Boa99}{article}{
   author={Boardman, J. Michael},
   title={Conditionally convergent spectral sequences},
   conference={
      title={Homotopy invariant algebraic structures},
      address={Baltimore, MD},
      date={1998},
   },
   book={
      series={Contemp. Math.},
      volume={239},
      publisher={Amer. Math. Soc., Providence, RI},
   },
   date={1999},
   pages={49--84},
   review={\MR{1718076}},
   doi={10.1090/conm/239/03597},
}

\bib{DLR16}{misc}{
      author={Dundas, B.},
      author={Lindenstrauss, A.},
      author={Richter, B.},
       title={Towards an understanding of ramified extensions of structured ring spectra},
        date={2016},
        note={\arxivlink{1604.05857}},
}

\bib{DS95}{article}{
   author={Dwyer, W. G.},
   author={Spali\'{n}ski, J.},
   title={Homotopy theories and model categories},
   conference={
      title={Handbook of algebraic topology},
   },
   book={
      publisher={North-Holland, Amsterdam},
   },
   date={1995},
   pages={73--126},
   review={\MR{1361887}},
   doi={10.1016/B978-044481779-2/50003-1},
}

\bib{EKMM}{book}{
   author={Elmendorf, A. D.},
   author={Kriz, I.},
   author={Mandell, M. A.},
   author={May, J. P.},
   title={Rings, modules, and algebras in stable homotopy theory},
   series={Mathematical Surveys and Monographs},
   volume={47},
   note={With an appendix by M. Cole},
   publisher={American Mathematical Society, Providence, RI},
   date={1997},
   pages={xii+249},
   isbn={0-8218-0638-6},
}

\bib{Fra15}{misc}{
      author={Franklin, G.},
       title={The Andr\'e--Quillen spectral sequence for pre--logarithmic ring spectra},
        date={2015},
        note={PhD Thesis, The University of Texas at Austin},
}

\bib{Hir03}{book}{
   author={Hirschhorn, Philip S.},
   title={Model categories and their localizations},
   series={Mathematical Surveys and Monographs},
   volume={99},
   publisher={American Mathematical Society, Providence, RI},
   date={2003},
   pages={xvi+457},
   isbn={0-8218-3279-4},
   review={\MR{1944041}},
}

\bib{HM03}{article}{
   author={Hesselholt, Lars},
   author={Madsen, Ib},
   title={On the $K$-theory of local fields},
   journal={Ann. of Math. (2)},
   volume={158},
   date={2003},
   number={1},
   pages={1--113},
   issn={0003-486X},
   review={\MR{1998478}},
   doi={10.4007/annals.2003.158.1},
}

\bib{HNP19}{article}{
   author={Harpaz, Yonatan},
   author={Nuiten, Joost},
   author={Prasma, Matan},
   title={The tangent bundle of a model category},
   journal={Theory Appl. Categ.},
   volume={34},
   date={2019},
   pages={Paper No. 33, 1039--1072},
   review={\MR{4020832}},
}

\bib{HR21}{misc}{
      author={H\"oning, E.},
      author={Richter, B.},
       title={Detecting and describing ramification for structured ring spectra},
        date={2021},
        note={\arxivlink{2101.12655}},
}

\bib{HSS00}{article}{
   author={Hovey, Mark},
   author={Shipley, Brooke},
   author={Smith, Jeff},
   title={Symmetric spectra},
   journal={J. Amer. Math. Soc.},
   volume={13},
   date={2000},
   number={1},
   pages={149--208},
   issn={0894-0347},
   review={\MR{1695653}},
   doi={10.1090/S0894-0347-99-00320-3},
}

\bib{Kat89}{article}{
   author={Kato, Kazuya},
   title={Logarithmic structures of Fontaine-Illusie},
   conference={
      title={Algebraic analysis, geometry, and number theory},
      address={Baltimore, MD},
      date={1988},
   },
   book={
      publisher={Johns Hopkins Univ. Press, Baltimore, MD},
   },
   date={1989},
   pages={191--224},
   review={\MR{1463703}},
}

\bib{KS04}{article}{
   author={Kato, Kazuya},
   author={Saito, Takeshi},
   title={On the conductor formula of Bloch},
   journal={Publ. Math. Inst. Hautes \'{E}tudes Sci.},
   number={100},
   date={2004},
   pages={5--151},
   issn={0073-8301},
   review={\MR{2102698}},
   doi={10.1007/s10240-004-0026-6},
}

\bib{Kuh06}{article}{
   author={Kuhn, Nicholas J.},
   title={Localization of Andr\'{e}-Quillen-Goodwillie towers, and the periodic
   homology of infinite loopspaces},
   journal={Adv. Math.},
   volume={201},
   date={2006},
   number={2},
   pages={318--378},
   issn={0001-8708},
   review={\MR{2211532}},
   doi={10.1016/j.aim.2005.02.005},
}

\bib{Lur11}{misc}{
      author={Lurie, J.},
       title={Derived Algebraic Geometry VII},
        date={2011},
        note={Preprint, available at the author's home page at \url{math.ias.edu/~lurie/papers/DAG-VII.pdf}},
}

\bib{Lur}{misc}{
      author={Lurie, J.},
       title={Higher Algebra},
        date={2017},
        note={Preprint, available at the author's home page},
}

\bib{Mat17}{article}{
   author={Mathew, Akhil},
   title={THH and base-change for Galois extensions of ring spectra},
   journal={Algebr. Geom. Topol.},
   volume={17},
   date={2017},
   number={2},
   pages={693--704},
   issn={1472-2747},
   review={\MR{3623668}},
   doi={10.2140/agt.2017.17.693},
}

\bib{Min03}{article}{
   author={Minasian, Vahagn},
   title={Andr\'{e}-Quillen spectral sequence for $THH$},
   journal={Topology Appl.},
   volume={129},
   date={2003},
   number={3},
   pages={273--280},
   issn={0166-8641},
   review={\MR{1962984}},
   doi={10.1016/S0166-8641(02)00184-0},
}

\bib{MM03}{article}{
   author={McCarthy, Randy},
   author={Minasian, Vahagn},
   title={HKR theorem for smooth $S$-algebras},
   journal={J. Pure Appl. Algebra},
   volume={185},
   date={2003},
   number={1-3},
   pages={239--258},
   issn={0022-4049},
   review={\MR{2006429}},
   doi={10.1016/S0022-4049(03)00089-6},
}

\bib{MMSS01}{article}{
   author={Mandell, M. A.},
   author={May, J. P.},
   author={Schwede, S.},
   author={Shipley, B.},
   title={Model categories of diagram spectra},
   journal={Proc. London Math. Soc. (3)},
   volume={82},
   date={2001},
   number={2},
   pages={441--512},
   issn={0024-6115},
   review={\MR{1806878}},
   doi={10.1112/S0024611501012692},
}

\bib{Ogu18}{book}{
   author={Ogus, Arthur},
   title={Lectures on logarithmic algebraic geometry},
   series={Cambridge Studies in Advanced Mathematics},
   volume={178},
   publisher={Cambridge University Press, Cambridge},
   date={2018},
   pages={xviii+539},
   isbn={978-1-107-18773-3},
   review={\MR{3838359}},
   doi={10.1017/9781316941614},
}

\bib{Qui70}{article}{
   author={Quillen, Daniel},
   title={On the (co-) homology of commutative rings},
   conference={
      title={Applications of Categorical Algebra},
      address={Proc. Sympos. Pure Math., Vol. XVII, New York},
      date={1968},
   },
   book={
      publisher={Amer. Math. Soc., Providence, R.I.},
   },
   date={1970},
   pages={65--87},
   review={\MR{0257068}},
}

\bib{Ric17}{misc}{
      author={Richter, B.},
       title={Commutative ring spectra},
        date={2017},
        note={\arxivlink{1710.02328}},
}

\bib{Rog08}{article}{
   author={Rognes, John},
   title={Galois extensions of structured ring spectra. Stably dualizable
   groups},
   journal={Mem. Amer. Math. Soc.},
   volume={192},
   date={2008},
   number={898},
   pages={viii+137},
   issn={0065-9266},
   review={\MR{2387923}},
   doi={10.1090/memo/0898},
}

\bib{Rog09}{article}{
   author={Rognes, John},
   title={Topological logarithmic structures},
   conference={
      title={New topological contexts for Galois theory and algebraic
      geometry (BIRS 2008)},
   },
   book={
      series={Geom. Topol. Monogr.},
      volume={16},
      publisher={Geom. Topol. Publ., Coventry},
   },
   date={2009},
   pages={401--544},
   review={\MR{2544395}},
   doi={10.2140/gtm.2009.16.401},
}

\bib{Rog14}{article}{
   author={Rognes, John},
   title={Algebraic $K$-theory of strict ring spectra},
   conference={
      title={Proceedings of the International Congress of
      Mathematicians---Seoul 2014. Vol. II},
   },
   book={
      publisher={Kyung Moon Sa, Seoul},
   },
   date={2014},
   pages={1259--1283},
   review={\MR{3728661}},
}

\bib{RR04}{article}{
   author={Richter, Birgit},
   author={Robinson, Alan},
   title={Gamma homology of group algebras and of polynomial algebras},
   conference={
      title={Homotopy theory: relations with algebraic geometry, group
      cohomology, and algebraic $K$-theory},
   },
   book={
      series={Contemp. Math.},
      volume={346},
      publisher={Amer. Math. Soc., Providence, RI},
   },
   date={2004},
   pages={453--461},
   review={\MR{2066509}},
   doi={10.1090/conm/346/06298},
}

\bib{RSS15}{article}{
   author={Rognes, John},
   author={Sagave, Steffen},
   author={Schlichtkrull, Christian},
   title={Localization sequences for logarithmic topological Hochschild
   homology},
   journal={Math. Ann.},
   volume={363},
   date={2015},
   number={3-4},
   pages={1349--1398},
   issn={0025-5831},
   doi={10.1007/s00208-015-1202-3},
}

\bib{RSS18}{article}{
   author={Rognes, John},
   author={Sagave, Steffen},
   author={Schlichtkrull, Christian},
   title={Logarithmic topological Hochschild homology of topological
   $K$-theory spectra},
   journal={J. Eur. Math. Soc. (JEMS)},
   volume={20},
   date={2018},
   number={2},
   pages={489--527},
   issn={1435-9855},
   review={\MR{3760301}},
   doi={10.4171/JEMS/772},
}

\bib{Sag14}{article}{
   author={Sagave, Steffen},
   title={Logarithmic structures on topological $K$-theory spectra},
   journal={Geom. Topol.},
   volume={18},
   date={2014},
   number={1},
   pages={447--490},
   issn={1465-3060},
   review={\MR{3159166}},
   doi={10.2140/gt.2014.18.447},
}

\bib{Sag16}{article}{
   author={Sagave, Steffen},
   title={Spectra of units for periodic ring spectra and group completion of
   graded $E_\infty$ spaces},
   journal={Algebr. Geom. Topol.},
   volume={16},
   date={2016},
   number={2},
   pages={1203--1251},
   issn={1472-2747},
   review={\MR{3493419}},
   doi={10.2140/agt.2016.16.1203},
}

\bib{Ser}{book}{
   author={Serre, Jean-Pierre},
   title={Local fields},
   series={Graduate Texts in Mathematics},
   volume={67},
   note={Translated from the French by Marvin Jay Greenberg},
   publisher={Springer-Verlag, New York-Berlin},
   date={1979},
   pages={viii+241},
   isbn={0-387-90424-7},
   review={\MR{554237}},
}

\bib{SS12}{article}{
   author={Sagave, Steffen},
   author={Schlichtkrull, Christian},
   title={Diagram spaces and symmetric spectra},
   journal={Adv. Math.},
   volume={231},
   date={2012},
   number={3-4},
   pages={2116--2193},
   issn={0001-8708},
   review={\MR{2964635}},
   doi={10.1016/j.aim.2012.07.013},
}

\bib{SSV16}{article}{
   author={Sagave, Steffen},
   author={Sch\"{u}rg, Timo},
   author={Vezzosi, Gabriele},
   title={Derived logarithmic geometry I},
   journal={J. Inst. Math. Jussieu},
   volume={15},
   date={2016},
   number={2},
   pages={367--405},
   issn={1474-7480},
   review={\MR{3480969}},
   doi={10.1017/S1474748014000322},
}

\bib{GW91}{article}{
   author={Weibel, Charles A.},
   author={Geller, Susan C.},
   title={\'{E}tale descent for Hochschild and cyclic homology},
   journal={Comment. Math. Helv.},
   volume={66},
   date={1991},
   number={3},
   pages={368--388},
   issn={0010-2571},
   review={\MR{1120653}},
   doi={10.1007/BF02566656},
}

\end{biblist}
\end{bibdiv}

\end{document}